\documentclass[a4paper]{amsart}

\usepackage{graphicx,stix}
\usepackage{amsmath}
\usepackage{amsthm}
\usepackage{amssymb}
\usepackage{enumitem}
\usepackage[all,2cell]{xy}
\usepackage[
pdfstartview=FitH, unicode=true, CJKbookmarks=true, bookmarksnumbered=true,
bookmarksopen=true, colorlinks=true, citecolor=green, filecolor=cyan]{hyperref}


\author{Chao Sun}
\date{\today}
\keywords{group action; equivariantization; triangulated category; weighted projective line}
\subjclass[2010]{Primary 18E30; Secondary 14F05, 14L30, 16W22, 16G10}
\address{School of Mathematical Sciences \\
         University of Science and Technology of China \\
        Hefei, Anhui 230026 \\
        P. R. China}
\email{maizisc@mail.ustc.edu.cn}

\UseAllTwocells
\SilentMatrices
\numberwithin{equation}{section}

\setlist{leftmargin=*,label=\emph{(\arabic*)}}
\setlist{nolistsep}

\newenvironment{enumi}
{\begin{enumerate}[itemsep=0pt,  parsep=0pt, topsep=0pt, label={$\mathrm{(\roman*)}$}, font=\normalfont,leftmargin=1cm]}
{\end{enumerate}}
\newenvironment{enum2}{
	\begin{enumerate}[itemsep=0pt,  parsep=0pt, topsep=0pt,label={$\textup{(\arabic*)}$},leftmargin=1.5pc]
}{\end{enumerate}}

\theoremstyle{plain}
\newtheorem{thm}{Theorem}[section]
\newtheorem{prop}[thm]{Proposition}

\newtheorem{cor}[thm]{Corollary}
\newtheorem{lem}[thm]{Lemma}

\theoremstyle{definition}
\newtheorem{defn}[thm]{Definition}
\newtheorem{exm}[thm]{Example}
\theoremstyle{remark}
\newtheorem{rmk}[thm]{Remark}

\def\ZZ{\mathbb{Z}}

\def\C{\mathcal{C}}

\def\D{\mathcal{D}}
\def\A{\mathcal{A}}
\def\B{\mathcal{B}}
\def\CC{\mathbb{C}}
\def\T{\mathcal{T}}

\def\I{\mathcal{I}}
\def\K{\mathcal{K}}
\def\NN{\mathbb{N}}
\def\SSS{\mathcal{S}}

\def\O{\mathcal{O}}
\def\X{\mathcal{X}}
\def\Y{\mathcal{Y}}
\def\XX{\mathbb{X}}

\def\PP{\mathbb{P}}
\def\SS{\mathbb{S}}
\def\E{\mathcal{E}}

\def\ra{\rightarrow}
\def\lra{\longrightarrow}
\def\epic{\twoheadrightarrow }

\def\monic{\hookrightarrow}

\def\GP{\textup{GProj}}
\def\Ex{\textup{Ex}}

\def\pr{\textup{pr}}

\def\coker{\textup{coker}\,}
\def\add{\textup{add\,}}

\def\hom{\textup{Hom}}
\def\End{\textup{End}}

\def\ext{\textup{Ext}}
\def\res{\textup{Res}}

\def\coh{\textup{coh}}

\def\mod{\textup{mod}\,}
\def\Mod{\textup{Mod}\,}

\def\Proj{\textup{Proj}\,}
\def\Inj{\textup{Inj}\,}
\def\proj{\textup{proj}\,}
\def\aut{\textup{Aut}}
\def\gldim{\textup{gl.dim}\,}

\def\pair#1{\langle #1 \rangle}

\def\Aut{\textup{Aut}}
\def\im{\textup{im}\,}
\def\id{\textup{Id}}

\def\op{\textup{op}}

\def\irr{\textup{Irr}}
\def\ind{\textup{Ind}}

\def\MCM{\textup{Gproj}}
 \def\gr{\textup{gr}\,}
 \def\sg{\textup{Sg}}
 \def\modZ{\textup{mod}^\ZZ}
 \def\projZ{\textup{proj}^\ZZ}

\usepackage{lineno}

\begin{document}
\title{A note on equivariantization of additive categories and triangulated categories}
\maketitle
\begin{abstract}
In this article, we investigate the category $\A^G$ of equivariant objects of an additive category $\A$ with respect to an action of a finite group $G$. We show that if $G$ is solvable then we can reconstruct  $\A$ from $\A^G$ via a finite sequence of equivariantization. We also consider the possibility of a triangulated structure on $\A^G$ canonical in certain sense  when $\A$ is triangulated and give several instances in which  $\A^G$ is indeed canonically triangulated. 

\end{abstract}
\section{Introduction}

In this article, we investigate the  category $\A^G$  of equivariant objects of an additive category $\A$ with respect to an  action of a finite group $G$. We refer the reader to \cite{deligne, DGNO} for generalities (see also \S\ref{sec: equiv})  and to \cite{BL, RR} for classical results in some classical settings. Besides these works, \cite{chen, CCZ, E, E3} are among recent works whose settings are close and/or closely related to ours. 


The motivating example of this article is as follows. 
Recall that given a collection $\underline{p}=(p_1,\dots, p_t)$ of integers  $\geq 2$ and a collection $\underline{\lambda}=(\lambda_1,\dots, \lambda_t)$ of distinct closed points of the complex projective line $\PP^1_\CC$, if $(p_1,\dots,p_t)\neq (q,q')$, where $q\neq q'$ then  there is a ramified  Galois covering $\pi\colon Y\ra X,$ where $Y$ is a smooth projective curve, such that the branch locus is  $\sum_{i=1}^t\lambda_i$ and the ramification index over $\lambda_i$ is $p_i$. Denote by $G$ the Galois group of $\pi$. Let $\XX$ be the weighted projective line attached to the data $(\underline{p}, \underline{\lambda})$ in the sense of Geigle and Lenzing~\cite{GL}. Then  we have an equivalence $\coh^G(Y)\simeq \coh\XX$ (see~\cite{Poli}). See also \cite{CCZ} for the case of a weighted projective line of virtual genus $1$.  

In the remaining of this introduction, we describe the content of this article.  For simplicity and clarity, we assume in this introduction  that all categories are linear over a field $k$ and  $|G|$ is invertible in $k$.

In \S\ref{sec: comonad}, we  briefly recall necessary background on separable functor and  comonad. 
Comonad as well as its dual concept  monad provides an important approach to  the category of equivariant objects with repect to an action of a finite group on an additive category. 
For example,  monad is used in \cite{chen, CCZ}, while  comonad is used  in \cite{E, E3} as well as here.
 In \S\ref{sec: equiv}, we collect mostly standard materials on equivariantization of an additive category $\A$ with respect to a $G$-action  on $\A$. 
In \S\ref{sec: functor}, we consider $G$-equivariant functors between additive categories with $G$-actions, i.e. those functors compatible with $G$-actions. 
We show that an adjoint of  a $G$-equivariant functor is naturally $G$-equivariant and   a $G$-equivariant functor that is fully faithful (resp. an equivalence) induces a fully faithful functor (resp. an equivalence) between the categories of equivariant objects. 
In \S\ref{sec: trivial}, we consider a trivial action and two related facts are included. 
In \S\ref{sec: reconstruction}, we show that if $k$ is algebraically closed, $G$ is solvable and $\A$ is idempotent-complete then we can obtain $\A$ from $\A^G$ after a finite sequence of equivariantization. This is a common generalization of Elagin's result \cite[Theorem 1.4]{E} in the special case when $G$ is abelian and Reiten and Riedtman's result \cite[Propsition 5.3]{RR} in the special case when $\A$ is the module category of a finite dimensional algebra. 

In \S\ref{sec: tri}, we  consider the possibility of a triangulated structure as well as a pretriangulated structure on $\A^G$ when $\A$ is triangulated. To achieve this, we introduce an admissible action of $G$ on $\A$, which we will assume, and define a canonical triangulated structure as well as a canonical pretriangulated structure on $\A^G$. We show that there is a unique canonical pretriangulated structure on $\A^G$. This was first observed by Chen \cite{XWC} in a slightly less general setting. Consequently, a canonical triangulated structure on $\A^G$ is unique once it exists.
To give instances of existence of canonical triangulated structures, we consider how equivariantization  commutes with localization and with the formation of the stable category of a Frobenius category. 
We show that if $\C$ is a triangulated subcategory of $\A$ closed under the action of $G$ then $(\A/\C)^G$ is canonically triangulated and there is an exact  equivalence up to retracts $\A^G/\C^G\ra (\A/\C)^G$ which is an equivalence when $\A^G/\C^G$ is idempotent-complete. 
For an idempotent-complete Frobenius category $\E$ with a $G$-action that preserves the exact structure on $\E$, we show that its stable category $\underline{\E}$ has an admissible action of $G$, $\underline{\E}^G$ is canonically triangulated and there is an exact equivalence up to retracts $\underline{\E^G}\ra \underline{\E}^G$ which is an equivalence when the stable category $\underline{\E^G}$ is idempotent-complete, where $\E^G$ is shown to be a Frobenius category in a canonical way. 
 We then apply these results to homotopy categories, derived categories and singularity categories to give some concrete examples in \S\ref{sec: exm}.
 We assume that $\A^G$ is canonically triangulated in \S\ref{sec: t-str}-\ref{sec: dim}. In \S\ref{sec: t-str} and \S\ref{sec: tilting}, we show how one can construct a t-structure on $\A^G$ (resp. a (weak) semi-orthogonal decomposition of $\A^G$,  an exceptional collection in $\A^G$,  a tilting object in $\A^G$) given one on $\A$ (resp. of $\A$,  in $\A$, in $\A$). 
 In \S\ref{sec: dim}, we show that $\A^G$ has the same dimension with $\A$ and that when $\A$ is  idempotent-complete and  saturated, $\A^G$ is also saturated.


\bigskip
\noindent  {\bf Notation and conventions.} Any ring in this article is assumed to be unital and associative. Modules over a ring always means a right module. We denote by $\Mod A$ (resp. $\mod A$, resp. $\proj A$) the category of arbitrary (resp. finitely generated, resp. finitely generated projective) $A$-modules. 

For a finite group $G$, $|G|$ denotes its order. 
Given a base field $k$,  a $G$-representation over $k$ refers to a group homomorphism $G\ra GL_k(V)^{\op}$, where $V$ is a $k$-space. If $|G|$ is invertible in $k$ then  $\irr(G)$ denotes a complete set of finite dimensional irreducbile $G$-representations over $k$. We denote by $G^*$ the group of characters over $k$.

When we write an adjoint pair $F: \A\rightleftarrows \B: H$ between two categories, we always mean that $F\colon \A\ra \B$ is  left adjoint to $H\colon \B\ra \A$.  An object $X$ in a category $\A$ is called a \emph{retract} of another object $Y\in \A$ if there is a morphsim $u\colon X\ra Y$ with a retraction (i.e., a morphism $v\colon Y\ra X$ such that $vu=\id_X$). 
A functor $F\colon \A\ra \B$ is said to be \emph{dense up to retracts} (resp. \emph{dense up to direct summands})
if each object in $\B$ is a retract  (resp. direct summand) 
of an object in the essential image of $F$. An equivalence up to retracts (resp. direct summands) is a functor that is fully faithful and dense up to retracts (resp. direct summands). 

  An integer $n$ is said to be invertible in an additive category $\A$ if for each morphism $f$ in $\A$, there exists a unique morphism $g$ such that $ng=f$. The unique morphism is also denoted by $\frac{1}{n}f$.  

\bigskip
\noindent {\bf Acknowledgements.} I thank Prof. Xiao-Wu Chen, my domestic supervisor,  for introducing to me the equivariant approach to weighted projective lines and for his reports on the same topic as in this article.  
I thank Jie Li  for discussion and  thank  Da-Wei Shen for constant and available-anytime communication. Moreover, I would like to take this opportunity to thank Prof. Lei Fu for answering my questions on algebraic geometry via email several years ago when I tried to understand the equivariant point of view to weighted projective lines yet knew little about algebraic geometry, and I thank also Prof. Bin Xu for helpful conversation on ramified Galois covers of the complex projective line. 

 I am currently visiting University of California, Los Angeles (UCLA) as a visiting student and I am supported by China Scholarship Council (CSC). I thank Prof. Ke-Feng Liu for accepting me  and for his hospitality. I thank also Prof. Ru-Gang Ye for helping me when I tried to apply the program of CSC.

\section{Equivariantization of additive categories}

\subsection{A reminder on separable functor and comonad}\label{sec: comonad}

In this subsection, we   review some necessary background on separable functors and comonads and our exposition mainly follows \cite[\S2-3]{XWC}. 

Following~\cite{NVVO}, a functor $F\colon \C\ra \B$ between two categories is called \emph{separable} if 
the natural transformation 
\[F\colon \hom_\C(-,-)\lra \hom_\B(F-,F-)\] 
between bifunctors from $\C^{\op}\times \C$ to the category of sets admits a retraction 
\[\Phi\colon \hom_B(F-,F-)\lra \hom_\C(-,-),\] 
i.e., a natural transformation $\Phi$  such that $\Phi F=\id$. 
The following fact characterizes when a left adjoint is separable (see \cite[Theorem 1.2]{Rafael},~\cite[Lemma 2.2]{XWC}). 
\begin{lem}\label{adjoint separable}
	Given an adjoint pair $F: \C\rightleftarrows \B: H$ with unit $\eta\colon \id \ra HF$ and counit $\epsilon\colon FH\ra \id$,  the following are equivalent:
\begin{enum2}
\item $F$ is separable;
\item there exists a natural transformation $\xi\colon HF\ra \id$ such that $\xi\eta=\id$;
\item there exist two natural transformations $\phi\colon HF\ra \id$ and  $\varphi\colon \id\ra HF$  such that $\phi\varphi=\id$. 
\end{enum2}
\end{lem}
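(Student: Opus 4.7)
The plan is to prove the cycle $(1) \Rightarrow (2) \Rightarrow (3) \Rightarrow (1)$. The bridge in both non-trivial directions is the natural bijection $\hom_\B(FX,FY) \cong \hom_\C(X,HFY)$ coming from the adjunction, under which $Ff$ corresponds to $HFf\circ \eta_X = \eta_Y \circ f$ by naturality of the unit.

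First I would handle $(1) \Rightarrow (2)$. Given a retraction $\Phi$ of $F$, I would transport it through the adjunction to obtain a natural transformation
\[\tilde\Phi\colon \hom_\C(-, HF-)\lra \hom_\C(-,-)\]
of bifunctors on $\C^{\op}\times \C$. Since $\tilde\Phi$ is natural in the first variable and each component is a map into $\hom_\C(-,Y)$-valued data on the covariant argument, the Yoneda lemma forces $\tilde\Phi$ to be post-composition with a natural transformation $\xi\colon HF\to \id$. The identity $\Phi F=\id$ rewrites, via the identification above, as $\xi_Y\circ \eta_Y\circ f = f$ for every $f\colon X\to Y$; specializing at $f=\id_Y$ yields $\xi\eta=\id$.

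The implication $(2) \Rightarrow (3)$ is immediate by setting $\phi = \xi$ and $\varphi = \eta$. For $(3) \Rightarrow (1)$ I would define the candidate retraction pointwise by
\[\Phi_{X,Y}(g) := \phi_Y \circ H(g) \circ \varphi_X\]
for $g\in \hom_\B(FX,FY)$. Naturality of $\Phi$ in both $X$ and $Y$ is straightforward from naturality of $\phi$ and $\varphi$ and the functoriality of $H$. For the retraction property, given $f\colon X\to Y$, naturality of $\varphi$ gives $HF(f)\circ \varphi_X = \varphi_Y\circ f$, hence $\Phi(Ff)=\phi_Y\circ \varphi_Y \circ f = f$ by the hypothesis $\phi\varphi=\id$.

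None of the steps is genuinely difficult; the only point requiring care is the bookkeeping in $(1) \Rightarrow (2)$, namely applying the adjunction in the correct direction and matching $Ff$ with $\eta_Y\circ f$ before invoking Yoneda. The remaining verifications are short naturality diagrams.
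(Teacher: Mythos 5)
Your proof is correct; the paper itself gives no proof of this lemma but cites Rafael and Chen, and your argument is exactly the standard one. In particular the Yoneda element you extract in $(1)\Rightarrow(2)$ is $\xi_X=\tilde\Phi_{HFX,X}(\id_{HFX})=\Phi_{HFX,X}(\epsilon_{FX})$, which is Rafael's explicit formula, so the two routes coincide.
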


Now we turn to comonad. We refer the reader to \cite{Maclane, BaWe}, where the dual concept monad is considered, for a detailed account.  Recall that a \emph{comonad} on a category $\B$ is a triple $T=(T,\epsilon, \Delta)$, where $T$ is an endofunctor of $\B$, $\epsilon\colon T\ra \id$ is a natural transform called the \emph{counit} and $\Delta\colon T\ra  T^2$ a natural transform called the \emph{comultiplication}, 
making the following diagrams commute 
\begin{equation}
	\begin{split}
		\xymatrix{T\ar[r]^\Delta\ar[d]_{\Delta} & T^2\ar[d]^{T\Delta} & & T \ar@{=}[dr]  & T^2 \ar[l]_{T\epsilon} \ar[r]^{\epsilon T} & T\ar@{=}[dl]\\ 
T^2\ar[r]^{\Delta T} & T^3 & & & T.\ar[u]^\Delta & }
\end{split}
\end{equation}
An adjoint pair $F: \C\rightleftarrows \B: H$ with unit $\eta\colon \id \ra HF$ and counit $\epsilon\colon FH\ra \id$  gives rise to a comonad $(FH, \epsilon, F\eta H)$ on $\B$. 

\begin{rmk}
	Dually, one can define a monad $(M, \eta, \mu)$ on a category $\A$ consisting of an endofunctor $M\colon \A\ra \A$, the \emph{unit} $\eta\colon \id\ra M$ and the \emph{multiplication} $\mu\colon M^2\ra M$, subject to the relations $\mu\circ M\mu=\mu\circ \mu M$ and $\mu\circ M\eta=\id_M=\mu\circ \eta M$. 
\end{rmk}

A comonad $(T, \epsilon, \Delta)$ is called \emph{separable} (see ~\cite[6.3]{BrVi} or \cite[2.9]{BBW})  if there exists a natural transformation $\sigma\colon T^2\ra T$ making the following diagrams 
\begin{equation}
	\begin{split}
		\xymatrix@C-15pt@R-15pt{T^2\ar[rr]^{T\Delta} \ar[dr]^\sigma \ar[dd]_{\Delta T} && T^3\ar[dd]^{\sigma T} & & T\ar@{=}[rr]\ar[ddr]_\Delta && T\\
& T \ar[dr]^\Delta & &&&&\\
T^3\ar[rr]^{T\sigma} && T^2, & & & T^2. \ar[uur]_\sigma & }
\end{split}
\end{equation}
commute. 

We already know that an adjoint pair yields a comonad and one solution (due to Eilenberg-Moore) of the converse problem is  the construction  of the category $\B_T$  of $T$-comodules for a given comonad $T=(T, \epsilon, \Delta)$ on $\B$. Recall that a $T$-comodule  is a pair $(X, f)$ consisting of an object $X\in \B$ and a morphism $f\colon X\ra TX$ in $\B$ making the following diagrams commute 
\begin{equation}
	\begin{split}
		\xymatrix{X\ar[r]^f\ar[d]_f & TX\ar[d]^{\Delta_X} & & X \ar[d]_f\ar@{=}[dr] &\\
TX\ar[r]^{Tf} & T^2 X & & TX\ar[r]^{\epsilon_X} & X. }
\end{split}
\end{equation}
A morphism $a\colon (X,f)\ra (X',f')$ of $T$-comodules is a morphism $a\colon X\ra X'$ rendering commutative the diagram 
\begin{equation}
	\begin{split}
		\xymatrix{X\ar[r]^f \ar[d]_a & TX\ar[d]^{Ta}\\  X'\ar[r]^{f'} & TX'.}
	\end{split}
\end{equation}
There is an adjoint pair $F_T: \B_T\rightleftarrows \B: H_T$. $F_T\colon \B_T\ra \B$ (called \emph{the forgetful functor}) is defined by the assignment $F_T((X,f))=X$ for an object $(X,f)\in \B_T$ and $F_T(h)=h$ for a morphism $h$ in $\B_T$; $H_T$ by the assignment $H_T(X)=(TX, \Delta_X)$ for an object $X\in \B$ and $H_T(h)=T(h)$ for a morphism $h$ in $\B$. The counit $\epsilon_T$ of the adjoint pair $(F_T,H_T)$ is $\epsilon_T=\epsilon$ and the unit $\eta_T$ is given by $(\eta_T)_{(X,f)}=f$ for $(X,f)\in \B_T$. Note that the comonad defined by the adjoint pair $(F_T, H_T, \eta_T, \epsilon_T)$ coincides with $T$. The forgetful functor $F_T$ is a separable functor iff $T$ is a separable comonad (see ~\cite[2.9(2)]{BBW}).

Let $F: \C\rightleftarrows \B :H$ be an adjoint pair with unit $\eta$ and counit $\epsilon$, and $T=(FH, \epsilon, F\eta H)$ the comonad that it defines in $\B$. Then there is a unqiue functor $K\colon \C\ra \B_T$ (see \cite[Theorem VI.3.1]{Maclane}), called \emph{the comparison functor}, such that $KH=H_T, F_TK=F$. In fact,  $K$  is defined such that  $K(X)=(FX, F\eta_X)$ for an object $X\in \C$ and $K(f)=F(f)$ for a morphism $f$ in $\C$.  $K$ is fully faithful on the essential image of $H$ (see e.g. \cite[Lemma 3.3]{XWC}). The functor $F$ is called \emph{comonadic} (or \emph{of effective descent type}) resp. \emph{precomonadic} (or \emph{of descent type}) if $K$ is an equivalence resp. fully faithful. A characterization when $F$ is precomonadic or comonadic is given by  Beck's theorem. We don't try to recall it here since we will not need it and we refer the reader to ~\cite{Beck}, ~\cite[Thereom 2.1]{Mesa}, ~\cite[Theorem 3.3.4]{BaWe} and ~\cite[Theorem VI.7.1]{Maclane}, where the conclusion for the dual concept monad is stated. 

The following fact (see \cite[Proposition 3.5]{XWC})  relates separability of $F$ to separability of $T$ and (``almost'') comonadicity of $F$. 
\begin{prop}\label{separable comonadicity}
Let $F: \C\rightleftarrows \B :H$ be an adjoint pair with unit $\eta$ and counit $\epsilon$, and $T=(FH, \epsilon, F\eta H)$ the comonad that it defines on $\B$. 
 Then $F$ is separable iff $T$ is separable and if the comparison functor $K\colon \C\ra  \B_T$ is an equivalence up to retracts. 
\end{prop}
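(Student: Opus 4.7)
The plan is to exploit the factorization $F = F_T\circ K$ together with Lemma \ref{adjoint separable} and the fact (recalled above) that $T$ is separable if and only if the forgetful functor $F_T$ is; this turns the question about $F$ into parallel questions about $F_T$ and about $K$.

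For the $(\Rightarrow)$ direction, assume $F$ is separable and pick $\xi\colon HF\ra \id_\C$ with $\xi\circ \eta = \id$ by Lemma \ref{adjoint separable}(2). I first set $\sigma := F\xi H\colon T^2\ra T$ and verify the separable-comonad axioms: the identity $\sigma\circ\Delta = F(\xi\circ\eta)H = \id_T$ is immediate, while the coassociativity conditions $\sigma T\circ T\Delta = \Delta\circ\sigma = T\sigma\circ \Delta T$ reduce, after whiskering with $F$ and $H$, to naturality of $\xi$ applied to $\eta_{HX}$ and of $\eta$ applied to $\xi_{HX}$ respectively. Next, to show $K$ is an equivalence up to retracts, I argue full faithfulness directly: faithfulness follows since separable functors are faithful and $F = F_T K$; fullness follows by setting $f := \xi_{X_2}\circ Hu\circ \eta_{X_1}$ for a comodule morphism $u\colon KX_1\ra KX_2$ and using the comodule axiom $FHu\circ F\eta_{X_1} = F\eta_{X_2}\circ u$ to compute $Kf = F(\xi_{X_2}\circ\eta_{X_2})\circ u = u$. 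Density up to retracts then follows by applying Lemma \ref{adjoint separable} to the pair $(F_T, H_T)$: since $T$ is now known to be separable, $F_T$ is separable, so the unit $\eta_T$ admits a natural retraction $\xi_T$; as $(\eta_T)_{(Y,g)} = g$, this exhibits every comodule $(Y,g)$ as a retract of $H_T F_T(Y,g) = K(HY)$.

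For the $(\Leftarrow)$ direction, assume $T$ is separable and $K$ is an equivalence up to retracts. Then $F_T$ is separable by the $T$-separable/$F_T$-separable dictionary, and $K$, being fully faithful (as part of the definition of equivalence up to retracts), is itself separable since any fully faithful functor is separable (the inverse on hom-sets furnishes the required retraction). Stability of separability under composition then gives that $F = F_T\circ K$ is separable.

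The main technical obstacle is the verification of the two coassociativity diagrams for $\sigma = F\xi H$: each ultimately reduces to a single naturality square, but one has to track horizontal composition and whiskering carefully. Beyond that the argument is essentially formal, combining the characterization of separability via a retraction of the unit, the $T$-versus-$F_T$ separability correspondence, and the composition stability of separability.
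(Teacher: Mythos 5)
Your proof is correct, and it follows essentially the standard argument: the paper itself gives no proof of this proposition but cites \cite[Proposition 3.5]{XWC}, where the same strategy is used --- a retraction $\xi$ of the unit yields the separability datum $\sigma=F\xi H$ for $T$, full faithfulness of $K$ comes from the formula $\xi_{X_2}\circ Hu\circ\eta_{X_1}$ together with the comodule axiom, density up to retracts comes from separability of $F_T$, and the converse is the composition $F=F_T\circ K$ of separable functors. Your verifications of the separable-comonad diagrams via naturality of $\xi$ at $\eta_{HX}$ and of $\eta$ at $\xi_{HX}$ are exactly right, so there is nothing to correct.
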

Here is an immediate corollary (see \cite[Corollary 3.17, Proposition 3.18]{Mesa}, \cite[Corollaries 3.10-3.11]{E4}, \cite[Corollary 3.6]{XWC}). 
\begin{cor}
	\label{comonadicity2}
	With the same notation as in Proposition~\ref{separable comonadicity}, assume further $\C$ is idempotent-complete. Then $F$ is separable iff $T$ is separable and if $F$ is comonadic. 
\end{cor}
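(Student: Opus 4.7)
The plan is to deduce this directly from Proposition~\ref{separable comonadicity} by upgrading ``equivalence up to retracts'' to an honest equivalence under the idempotent-completeness hypothesis on $\C$. Since Proposition~\ref{separable comonadicity} already gives the equivalence between $F$ being separable and the conjunction ``$T$ is separable and $K$ is an equivalence up to retracts'', it suffices to show that, assuming $T$ is separable, $K$ is an equivalence up to retracts if and only if $K$ is an equivalence (i.e.\ $F$ is comonadic). One direction is trivial. For the other, we need the essential surjectivity of $K$ given that $K$ is fully faithful and dense up to retracts.

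To prove essential surjectivity, I would take any $(Y,g)\in \B_T$ and, by density up to retracts, pick $X\in \C$ together with $u\colon (Y,g)\ra K(X)$ and $v\colon K(X)\ra (Y,g)$ in $\B_T$ with $vu=\id_{(Y,g)}$. Then $uv\colon K(X)\ra K(X)$ is an idempotent endomorphism which, by full faithfulness of $K$, has the form $K(e)$ for a unique idempotent $e\colon X\ra X$ in $\C$. Since $\C$ is idempotent-complete, $e$ splits as $e=qp$ with $pq=\id_{X'}$ for some $X'\in\C$, so $K(e)$ splits through $K(X')$ in $\B_T$. On the other hand $uv=K(e)$ already splits through $(Y,g)$ via $v$ and $u$. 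By the uniqueness of splittings of an idempotent up to canonical isomorphism, the two splittings are isomorphic, giving $(Y,g)\cong K(X')$. Hence $K$ is dense, and being fully faithful it is an equivalence.

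With this observation in hand, the corollary follows formally. If $F$ is separable then Proposition~\ref{separable comonadicity} yields that $T$ is separable and $K$ is an equivalence up to retracts; the argument above then upgrades $K$ to an equivalence, so $F$ is comonadic. Conversely, if $T$ is separable and $F$ is comonadic, then $K$ is an equivalence, in particular an equivalence up to retracts, and Proposition~\ref{separable comonadicity} gives that $F$ is separable.

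The only genuinely non-formal step is the idempotent-splitting argument in the second paragraph; nothing else should present any obstacle, as everything is extracted from the preceding proposition. The main subtlety to be careful about is verifying that the idempotent $uv$ really does lift along $K$ (which uses full faithfulness) and that the matching splittings in $\B_T$ produce an isomorphism $(Y,g)\cong K(X')$ of comodules, not merely of underlying objects in $\B$; but this is automatic since both splittings take place in the category $\B_T$ itself.
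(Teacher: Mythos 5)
Your proposal is correct and is essentially the argument the paper intends: the corollary is stated as an immediate consequence of Proposition~\ref{separable comonadicity}, the only content being that a fully faithful functor which is dense up to retracts out of an idempotent-complete category is an equivalence, which is exactly the idempotent-splitting argument you carry out (and your verification that the two splittings of $uv=K(e)$ are canonically isomorphic inside $\B_T$ is the right way to see it).
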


\subsection{Recollections on equivariantization}\label{sec: equiv}

Let $k$ be a field, $\A$ an additive category and $G$ a finite group with unit $e$. 

\begin{defn}
	A (\emph{right}) \emph{action} of $G$ on an additive category $\A$ means the following data:

	\begin{itemize}
		\item a collection $\{F_g\mid g\in G\}$ of  autoequivalences of $\A$;  
		\item a collection $\{\delta_{g,h}\colon F_gF_{h}\ra F_{hg}\mid g,h\in G\}$ of natural isomorphisms  satisfying a $2$-cocyle condition, i.e., such that for each $g,h,l\in G$, the following diagram commutes
	\begin{equation}
		\begin{split}
			\xymatrix{F_gF_hF_l\ar[d]_{\delta_{g,h}F_l} \ar[r]^{F_g\delta_{h,l}} & F_gF_{lh}\ar[d]^{\delta{g,lh}}\\
			F_{hg}F_l\ar[r]^{\delta_{hg,l}} & F_{lhg}.}
		\end{split}
	\end{equation}
	\end{itemize}

\end{defn}
We will denote such a $G$-action by $\{F_g, \delta_{g,h}\mid g,h\in G\}$. If each $F_g$ is an automorphism and $\delta_{g,h}=\id$ for each $g,h\in G$ then we say that the action is a \emph{strict} action. Note that a strict action of $G$ on $\A$ amounts to a group homomorphism $G\ra \Aut(\A)$, where $\aut(\A)$ is the group of automorphisms of $\A$. 
When $\A$ is $k$-linear, we will insist that the action be $k$-linear, i.e., $F_g$'s are $k$-linear. So we will not mention a $k$-linear action explicitly. 

\begin{rmk} Here we record a point of view  in the language of monoidal categories (see \cite{DGNO}). Let $\underline{G}$ be the monoidal category: the objects of $\underline{G}$ are elements of $G$, the only morphisms are the identities and the tensor product is given by multiplication in $G$. 
	Let $\underline{\End}(\A)$ denote the category of ($k$-linear) additive endofunctors of  ($k$-linear)  $\A$, which is a monoidal ($k$-linear) category with composition of functors as its tensor product. A ($k$-linear) action of $G$ on $\A$ is the same as an action of $\underline{G}$ on $\underline{\End}(\A)$, that is, we are given a  monoidal functor $F\colon \underline{G}\ra \underline{\End}(\A)$.  
\end{rmk}
\begin{rmk}\label{left-right}
One can similarly define a left action of $G$ on $\A$, that is, a collection of $\{E_g\mid g\in G\}$ of autoequivalences of $\A$ and a collection $\{\gamma_{g,h}\colon E_gE_h\ra  E_{gh} \mid g,h\in G\}$ of natural isomorphisms satisfying a similar $2$-cocyle condition. The assignment \[E_g=F_{g^{-1}},\quad \gamma_{g,h}=\delta_{g^{-1},h^{-1}},\quad g,h\in G\] allows one to transfer a right action $\{F_g,\delta_{g,h}\}$ to a left action  $\{E_g, \gamma_{g,h}\}$ and vice versa. 
\end{rmk}

Given a $G$-action $\{F_g, \delta_{g,h}\mid g,h\in G\}$ on $\A$, there exists a unique natural isomorphism $\Xi\colon F_e\overset{\cong}{\ra} \id$, which is called the \emph{unit} of the action, such that $F_e\Xi=\delta_{e,e}=\Xi F_e$. For $g\in G$, we have $F_g \Xi=\delta_{g,e}, \Xi F_g=\delta_{e,g}$. 
Moreover, for each $g\in G$, we have an adjoint pair $(F_g, F_{g^{-1}})$ with unit   $\delta_{g^{-1}, g}^{-1}\circ \Xi^{-1}$ and counit $\Xi\circ \delta_{g,g^{-1}}$.

The following commutative diagrams are necessary in checking commutativity of diagrams occuring later.
\begin{lem}\label{associativity}
For $g,h,k,l\in G$, the following  diagram 	
\begin{equation}
	\begin{split}
		\xymatrix{F_gF_hF_kF_l\ar[r]^{F_gF_h\delta_{k,l}}\ar[d]_{\delta_{g,h}F_kF_l} & F_gF_hF_{lk}\ar[d]^{\delta_{g,h}F_{lk}}\\
		F_{hg}F_kF_l\ar[r]^{F_{hg}\delta_{k,l}} & F_{hg}F_{lk}}
	\end{split}
\end{equation}
commutes. 
\end{lem}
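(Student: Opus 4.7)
The plan is simply to recognize this square as an instance of the interchange law for horizontal composition of natural transformations, i.e., as the naturality square of $\delta_{g,h}$ applied componentwise to the natural transformation $\delta_{k,l}$.

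More precisely, $\delta_{g,h}\colon F_gF_h\Rightarrow F_{hg}$ is a natural transformation of endofunctors of $\A$. Hence for any morphism $\phi\colon X\to Y$ in $\A$ the naturality square
\begin{equation*}
\xymatrix{F_gF_h(X)\ar[r]^{F_gF_h(\phi)}\ar[d]_{(\delta_{g,h})_X} & F_gF_h(Y)\ar[d]^{(\delta_{g,h})_Y}\\
F_{hg}(X)\ar[r]^{F_{hg}(\phi)} & F_{hg}(Y)}
\end{equation*}
commutes. I would then specialize this to the morphism $\phi=(\delta_{k,l})_Z\colon F_kF_l(Z)\to F_{lk}(Z)$ for an arbitrary object $Z\in\A$; the resulting commutative square is exactly the square in the statement, evaluated at $Z$. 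Since this holds for every $Z$, the diagram of natural transformations commutes.

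Equivalently, one can phrase this by saying that the horizontal composition $\delta_{g,h}\ast\delta_{k,l}\colon F_gF_hF_kF_l\Rightarrow F_{hg}F_{lk}$ has a well-defined value, computable either as $\delta_{g,h}F_{lk}\circ F_gF_h\delta_{k,l}$ or as $F_{hg}\delta_{k,l}\circ \delta_{g,h}F_kF_l$. I do not anticipate any obstacle: nothing about the $2$-cocycle condition, the unit, or the group structure is used, and the only input is the naturality of $\delta_{g,h}$.
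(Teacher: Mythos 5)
Your proof is correct: the square is precisely the Godement interchange law, i.e.\ the naturality square of $\delta_{g,h}\colon F_gF_h\Rightarrow F_{hg}$ evaluated at the morphism $(\delta_{k,l})_Z\colon F_kF_l(Z)\to F_{lk}(Z)$, and both composites equal the horizontal composite $\delta_{g,h}\ast\delta_{k,l}$. This is a genuinely different justification from the paper's, which dispatches the lemma in one line by appealing to the $2$-cocycle condition on $\{\delta_{g,h}\}$. Your route is the more accurate one: the displayed square involves only the two transformations $\delta_{g,h}$ and $\delta_{k,l}$ whiskered against each other, and commutes for \emph{any} natural transformations between these functors purely by naturality --- no cocycle identity, unit, or group structure enters. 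The paper's attribution to the cocycle condition is at best a red herring (it is not clear how one would derive the square from the cocycle identity alone without invoking naturality anyway), so your argument is both simpler and more general.
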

\begin{proof}
	This follows readily from the $2$-cocyle condition satisfied by $\{\delta_{g,h}\}$. 
\end{proof}

\begin{cor}\label{commutativity}
	For $g,h\in G$, we have a commutative diagram of natural isomorphisms
	\begin{equation}\label{eq: commutativity}
		\begin{split}
			\xymatrix@C-10pt{\hom(F_gF_h A, B)\ar[r]\ar[d] & \hom(F_h A, F_{g^{-1}} B)\ar[r] & \hom(A, F_{h^{-1}} F_{g^{-1}} B)\ar[d] \\
	\hom(F_{hg} A, B)\ar[rr] & & \hom(A, F_{(hg)^{-1}} B),}
\end{split}
\end{equation}
where each horizontal arrow is given by the relevant adjoint pair and the left (resp. right) vertical arrow is induced by $\delta_{g,h}^{-1}$ (resp. $\delta_{h^{-1}, g^{-1}}$).
\end{cor}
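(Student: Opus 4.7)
The plan is to unpack each adjunction isomorphism via its unit and counit, both of which the excerpt has already expressed in terms of $\delta$ and $\Xi$, and then chase a single morphism $f\in\hom(F_gF_hA,B)$ around the square. Because every arrow in \eqref{eq: commutativity} is a natural isomorphism, it suffices to check the equality on an arbitrary $f$. I would use that for $(F_g,F_{g^{-1}})$ the unit is $\eta^{(g)}=\delta_{g^{-1},g}^{-1}\circ\Xi^{-1}$, so that the adjunction isomorphism sends $f\colon F_gX\to Y$ to $F_{g^{-1}}(f)\circ(\delta_{g^{-1},g})_X^{-1}\circ\Xi_X^{-1}$, and similarly for $(F_h,F_{h^{-1}})$ and $(F_{hg},F_{(hg)^{-1}})$.

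Following the top row and then the right edge, $f$ is sent to
$(\delta_{h^{-1},g^{-1}})_B\circ F_{h^{-1}}F_{g^{-1}}(f)\circ F_{h^{-1}}(\eta^{(g)}_{F_hA})\circ\eta^{(h)}_A\in\hom(A,F_{(hg)^{-1}}B).$
Following the left edge and then the bottom row, $f$ is sent to $F_{(hg)^{-1}}(f)\circ F_{(hg)^{-1}}((\delta_{g,h})_A^{-1})\circ\eta^{(hg)}_A$. Applying naturality of $\delta_{h^{-1},g^{-1}}$ to pull it past $F_{h^{-1}}F_{g^{-1}}(f)$ matches up the $F_{(hg)^{-1}}(f)$ factors appearing in both expressions, reducing the claim to an identity of natural transformations $\id\to F_{(hg)^{-1}}F_gF_h$ assembled purely from $\Xi^{-1}$ and various $\delta^{\pm1}$'s.

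This reduced identity is a $2$-cocycle manipulation. Expanding $\eta^{(g)},\eta^{(h)},\eta^{(hg)}$ in terms of $\Xi^{-1}$ and $\delta^{-1}$'s, I would stack several instances of the $2$-cocycle square with arguments drawn from $\{g,h,g^{-1},h^{-1}\}$; Lemma~\ref{associativity} then supplies exactly the four-functor identity needed to slide each $\delta$-component past an intervening pair of $F$'s, so that neighbouring pairs of the form $\delta\circ\delta^{-1}$ cancel. After these rewritings the two composites coincide on the nose.

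The main obstacle is purely bookkeeping: one must remain attentive to the convention $\delta_{g,h}\colon F_gF_h\to F_{hg}$ (so subscripts reverse under composition), carefully distinguish $\delta$ from $\delta^{-1}$, and track to which functor each $\delta$-component is applied. Once the correct sequence of $2$-cocycle rewrites (mediated by Lemma~\ref{associativity}) is written down, the verification is mechanical.
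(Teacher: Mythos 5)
Your proposal is correct and follows essentially the same route as the paper, whose entire proof is ``One can use Lemma~\ref{associativity}; we leave the details to the reader'': you make explicit the intended details, namely expanding each adjunction via the unit $\eta^{(g)}=\delta_{g^{-1},g}^{-1}\circ\Xi^{-1}$, using naturality of $\delta_{h^{-1},g^{-1}}$ to cancel the $F_{(hg)^{-1}}(f)$ factors, and reducing to a coherence identity $\id\to F_{(hg)^{-1}}F_gF_h$ settled by the $2$-cocycle condition together with Lemma~\ref{associativity}. The setup of the two composites and the reduction are accurate, so the remaining verification is indeed the mechanical bookkeeping you describe.
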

\begin{proof}
One can use Lemma~\ref{associativity}. 
	We leave the details to the reader.
\end{proof}

Suppose  $\A$ admits a $G$-action $\{F_g, \delta_{g,h}\}$ with unit $\Xi$. A \emph{$G$-equivariant} object of $\A$ is a pair $(A,\{\lambda_g^A\mid g\in G\}),$ where  $A$ is an object in $\A$  and $\{\lambda^A_g\mid g\in G\}$ is  a collection $\{\lambda_g^A\colon F_g(A)\ra A\}$ of isomorphisms, called an \emph{equivariant structure}  on $A$, such that for each $g,h \in G$, the following diagram commutes
\begin{equation}
	\begin{split}
		\xymatrix{ F_gF_h A \ar[d]_{\delta_{g,h,A}}\ar[r]^{F_g(\lambda_h^A)} & F_gA\ar[d]^{\lambda_g^A}\\
		F_{hg}A  \ar[r]^{\lambda_{hg}^A} & A.}
\end{split}
\end{equation}
	In this case, we have  $\lambda_e^A=\Xi_A$. 
By abuse of language, we will also say that $A\in \A$ is $G$-equivariant with its equivariant structure implicit. An object $B\in \A$ is called \emph{$G$-invariant} if $F_g(B)\cong B$ for all $g\in G$. If $B\in \A$ is $G$-equivariant then obviously it is $G$-invariant.

Let $(A, \{\lambda_g^A\}), (B, \{\lambda_g^B\})$ be $G$-equivariant objects of $\A$. Then there is a right action of $G$ on $\hom_\A(A,B)$ defined by \[f.g=\lambda_g^B\circ F_g(f)\circ (\lambda_g^A)^{-1}\] for $g\in G, f\in \hom_\A(A,B)$. 
Moreover, $G$ acts on the endomorphism ring $\End_\A(A)$ of $A$ by automorphism of a ring. 
The category $\A^G$ of equivariant objects consists of $G$-equivariant objects  of $\A$ and 
a morphism  $f\colon (A, \{\lambda_g^A\}) \ra (B, \{\lambda_g^B\})$ in $\A^G$ is a morphism $f\in \hom_{\A}(A,B)$ that is invariant under the induced action of $G$ on $\hom_\A(A,B)$. $\A^G$ is called the \emph{equivariantization} of $\A$ with respect to the action of $G$. $\A^G$ is also an additive category. If $\A$ is an abelian category then so is $\A^G$.

Suppose $\A$ is $k$-linear. Given a $G$-representation $\rho\colon G\ra GL_k(V)^{\op}$, we have a $k$-linear endofunctor \[-\otimes \rho\colon \A^G\lra \A^G,\quad (A,\{\lambda_g^A\})\mapsto (A\otimes V, \{\lambda_g^A\otimes \rho_g\})\] sending a morphism $f$ in $\A^G$ to $f\otimes \id_V$.  ($\otimes$ is always short for $\otimes_k$ in this article.) 

We need the following standard fact. 
\begin{lem}\label{tensor hom} If $\A$ is $k$-linear  and  $A,B\in \A$ are $G$-equivariant then for finite dimensional $G$-representations $V,W$ over $k$, we have an isomorphism of $G$-representations
	\[\hom_\A(A\otimes V, B\otimes W)\cong \hom_\A(A,B)\otimes \hom_k(V,W).\]
\end{lem}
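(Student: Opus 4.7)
The plan is to produce a canonical $k$-linear isomorphism and then check it intertwines the $G$-actions. For the underlying $k$-linear isomorphism I would compose the following three standard natural isomorphisms:
\begin{align*}
\hom_\A(A\otimes V, C) &\cong \hom_k\bigl(V, \hom_\A(A, C)\bigr),\\
\hom_\A(A, B\otimes W) &\cong \hom_\A(A, B)\otimes_k W,\\
\hom_k(V, X\otimes_k W) &\cong X\otimes_k \hom_k(V, W),
\end{align*}
where $X$ denotes an arbitrary $k$-space. The first two hold in any $k$-linear additive category because choosing a basis $V\cong k^n$ makes $-\otimes V\cong(-)^{\oplus n}$ and reduces them to additivity of $\hom_\A$; the third is standard linear algebra using that $V$ is finite-dimensional. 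Composing them with $X=\hom_\A(A,B)$ gives a $k$-linear isomorphism natural in $A, B, V, W$.

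To verify $G$-equivariance, I would unwind the induced actions. The equivariant structures on $A\otimes V$ and $B\otimes W$ are $\lambda_g^A\otimes \rho_g^V$ and $\lambda_g^B\otimes \rho_g^W$, where $\rho^V, \rho^W$ denote the $G$-representations on $V, W$. Using $k$-linearity of $F_g$ to canonically identify $F_g(A\otimes V)\cong F_g(A)\otimes V$, the action on the left-hand side sends $f$ to $(\lambda_g^B\otimes \rho_g^W)\circ F_g(f)\circ(\lambda_g^A\otimes \rho_g^V)^{-1}$. On the right-hand side, the action is the tensor of the induced action on $\hom_\A(A,B)$ and the standard action on $\hom_k(V,W)$, namely $\phi\mapsto \rho_g^W\circ\phi\circ(\rho_g^V)^{-1}$. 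Each of the three isomorphisms above is manifestly natural, so $G$-equivariance reduces to the standard identification $\hom_k(V,W)\cong W\otimes_k V^*$ as $G$-representations.

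The main obstacle is pure bookkeeping: tracking equivariant structures on iterated tensor products and confirming that the canonical identification $F_g(A\otimes V)\cong F_g(A)\otimes V$ sends the equivariant structure $\lambda_g^{A\otimes V}$ to $\lambda_g^A\otimes \rho_g^V$. Once this is in place, the three isomorphisms can be checked $G$-equivariant one at a time, and the lemma follows.
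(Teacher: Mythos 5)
Your proposal is correct; the paper's own proof is just the one-line ``This follows from direct verification,'' and your decomposition into the copower adjunction, additivity of $\hom_\A$, and the finite-dimensional linear-algebra isomorphism $\hom_k(V,X\otimes_k W)\cong X\otimes_k\hom_k(V,W)$ is a valid way to carry out exactly that verification. The one piece of bookkeeping you flag --- that the canonical identification $F_g(A\otimes V)\cong F_g(A)\otimes V$ carries the equivariant structure of $A\otimes V$ to $\lambda_g^A\otimes\rho_g^V$ --- is in fact how the paper \emph{defines} the equivariant structure on $A\otimes V$, so nothing further is needed there.
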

\begin{proof}
	This follows from  direct verification.
\end{proof}
We have the following
\begin{cor}[{cf. \cite[Lemma 4.1]{BKR}}]
	If $\A$ is $k$-linear, $|G|$ is invertible in $k$ and  $A,B\in \A$ are $G$-equivariant then we have a decomposition of $\hom_\A(A,B)$ as a $G$-representation \[\hom_\A(A,B)\cong \bigoplus_{\rho\in\irr(G)} \hom_{\A^G}(A\otimes \rho, B)\otimes \rho.\]
\end{cor}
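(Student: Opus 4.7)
The plan is to view $\hom_\A(A,B)$ as a $G$-representation under the induced action $f.g = \lambda_g^B \circ F_g(f) \circ (\lambda_g^A)^{-1}$, decompose it into isotypic components using semisimplicity (available since $|G|$ is invertible in $k$), and identify each multiplicity space with $\hom_{\A^G}(A\otimes\rho, B)$ via the preceding lemma.

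First I would apply Lemma~\ref{tensor hom} with $V = \rho$ and $W$ the trivial one-dimensional $G$-representation, together with the canonical identification $B \otimes k \cong B$ of equivariant objects, to obtain an isomorphism of $G$-representations
\[\hom_\A(A\otimes\rho, B) \;\cong\; \hom_\A(A,B) \otimes \rho^*,\]
where $\rho^* := \hom_k(\rho, k)$ is the dual representation. Taking $G$-invariants and noting that by definition $\hom_{\A^G}(X,Y) = \hom_\A(X,Y)^G$ for $G$-equivariant $X, Y$, this yields
\[\hom_{\A^G}(A\otimes\rho, B) \;\cong\; \bigl(\hom_\A(A,B) \otimes \rho^*\bigr)^G \;\cong\; \hom_G\bigl(\rho, \hom_\A(A,B)\bigr),\]
the last step being the standard averaging isomorphism $(V \otimes \rho^*)^G \cong \hom_G(\rho, V)$ furnished by the idempotent $\tfrac{1}{|G|}\sum_{g\in G} g$, which is available precisely because $|G|$ is invertible in $k$.

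To conclude, I would invoke the classical isotypic decomposition: since $kG$ is a semisimple algebra (Maschke) with only finitely many isomorphism classes of simple modules, any $G$-representation $V$ over $k$ splits canonically as $V \cong \bigoplus_{\rho\in\irr(G)} \hom_G(\rho, V) \otimes \rho$. Applying this to $V = \hom_\A(A,B)$ and substituting the identification $\hom_G(\rho, \hom_\A(A,B)) \cong \hom_{\A^G}(A\otimes\rho, B)$ from the previous step yields exactly the claimed formula. No serious obstacle is expected: the argument is entirely formal once Lemma~\ref{tensor hom} is in hand. The only point worth double-checking is the compatibility of the various $G$-actions along the chain of natural isomorphisms, but this has already been absorbed into the statement of Lemma~\ref{tensor hom} (that it is an isomorphism of $G$-representations).
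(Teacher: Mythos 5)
Your argument is correct and is essentially the paper's own proof, just written out in more detail: the paper likewise chains $\hom_k(V,\hom_\A(A,B))\cong V^\vee\otimes\hom_\A(A,B)\cong\hom_\A(A\otimes V,B)$ as $G$-representations (via Lemma~\ref{tensor hom}), then takes $G$-invariants and applies the isotypic decomposition, leaving those last steps as "follows readily." Your explicit treatment of the averaging idempotent and the invariants is exactly what the paper leaves implicit.
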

\begin{proof}

	For a finite dimensional $G$-representation $V$, we have isomorphisms of $G$-representations \[\hom_k(V, \hom_\A(A,B))\cong V^\vee\otimes \hom_\A(A,B)\cong \hom_\A(A\otimes V, B).\] The desired follows readily.
\end{proof}

Here are two standard examples of equivariantization.

\begin{exm}\label{exm skew}
	Let $R$ be a  ring and $G$ a finite group. Suppose $R$ admits a right $G$-action by automorphisms, that is, we are given a group homomorphism $\rho\colon G\ra \Aut(R)^{\op}$. We also write $\rho(g)(a)=a^g$ for $g\in G, a\in R$. 
	For $g\in G$ and $M\in \Mod R$, the \emph{twisted module} $M^g$ is defined such that $M^g=M$ as an abelian group and the new $R$-action $._g$ is defined by $m._ga=m.a^{g^{-1}}$ for $m\in M$ and $a\in R$. This yields an automorphism $(-)^g\colon \Mod R\ra \Mod R$ sending a morphism $f$ in $\Mod R$ to $f$, which is actually isomorphic to extension by scalars along the ring isomorphism $\rho(g)\colon R\ra R.$ Then we have a strict $G$-action $\{F_g=(-)^g\}$ on $\Mod R$. 
	Recall also that we have a skew group algebra $RG$, which is a free left $R$-module with elements in $G$ as a basis and has multiplication defined by $(ag)(a'g'):=aa'^{g^{-1}}gg'$ for $a,a'\in R, g,g'\in G$. There is a canonical isomorphism \[\Phi\colon \Mod RG\overset{\cong}{\lra} (\Mod R)^G\] such that  $\Phi(M)=(M, \{\lambda_g^M\})$ for an object $M\in \Mod RG$, where $\lambda_g^M$ is defined by \[\lambda_g^M\colon M^g\ra M,\quad  m\mapsto m.g,\] and $\Phi(f)=f$ for a morphism $f$ in $\Mod RG$.   $\Phi$ restricts to an isomorphism $\mod RG\cong (\mod R)^G$; if $|G|$ is invertible in $R$, $\Phi$  restricts to  an equivalence $ \proj RG\simeq (\proj R)^G$.
\end{exm}
\begin{exm}
	Let $X$ be a  scheme of finite type over  a field $k$ 
	and let  $\coh(X)$ be the category of coherent sheaves on $X$. Suppose $X$ admits a right $G$-action by $k$-automorphisms, that is,  we are given a group homomorphism $\rho\colon G\ra \aut_k(X)^{\op}$, where $\aut_k(X)$ is the group of  automorphisms of $X$ as a $k$-scheme. Then there is a strict $G$-action $\{F_g\}$  on $\coh(X)$, where $F_g=\rho(g)_*$ is pushout along the $k$-automorphism $\rho(g)\colon X\ra X$. 
\end{exm}

There are an obvious forgetful functor $\omega\colon \A^G\ra \A$, forgetting the $G$-equivariant structure, and  an induction functor $\ind\colon \A\ra \A^G$ such that  $\ind(A)$ is $\oplus_{g\in G} F_g(A)$ equipped with an obvious equivariant structure. Sometimes we will denote the induction functor by $\ind^G$ to avoid possible confusion. 

The following properties  are essential for us.
\begin{lem}
	\label{adjoint}
	\begin{enum2}
	\item $\omega$ is a faithful conservative additive functor. $\ind$ is left and right adjoint to $\omega$ such that the composition $X\ra \ind\circ \omega(X) \ra X$  of adjunctions  equals $|G|\cdot \id_X$. 
	
	\item $\ind$ is a separable functor. If $|G|$ is invertible in $\A$   then $\omega$ is a separable functor.
	\item If $\A$ is idempotent-complete and $|G|$ is invertible in $\A$ then each object $X\in \A^G$ is a direct summand of $\ind\circ \omega(X)$ in $\A^G$.  
	\item If $\A$ is idempotent-complete then so is $\A^G$. 
	\end{enum2}
\end{lem}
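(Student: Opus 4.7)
The four parts are largely formal once (1) is in hand. My plan is to prove (1) by writing down explicit unit and counit morphisms, to deduce (2) and (3) as formal corollaries, and to handle (4) by a direct transport of structure.

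For (1), additivity of $\omega$ is immediate, faithfulness is built into the definition of morphisms in $\A^G$, and conservativity holds because the inverse of an equivariant isomorphism is automatically equivariant. For the two adjunctions, equip $\ind(A)=\bigoplus_{g\in G}F_g(A)$ with the equivariant structure whose block $F_hF_gA\to F_{gh}A$ is $\delta_{h,g,A}$. For $\ind\dashv\omega$, take as unit at $A$ the composition of $\Xi_A^{-1}$ with the inclusion at the $e$-summand, and as counit at $X$ the morphism whose $g$-component is $\lambda_g^X$; equivariance of the latter is exactly the $2$-cocycle identity for $\lambda^X$. Symmetrically, for $\omega\dashv\ind$, the unit at $X$ has $g$-component $(\lambda_g^X)^{-1}$ and the counit at $A$ is projection on the $e$-summand followed by $\Xi_A$. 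The triangle identities are direct verifications, and the composition appearing in the statement computes as $\sum_{g\in G}\lambda_g^X\circ(\lambda_g^X)^{-1}=|G|\cdot\id_X$.

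Part (2) then follows from Lemma~\ref{adjoint separable}. For $\ind\dashv\omega$, the projection on the $F_e$-summand composed with $\Xi$ is a natural retraction of the unit, and this uses no invertibility hypothesis. For $\omega\dashv\ind$, when $|G|$ is invertible in $\A$, $\tfrac{1}{|G|}$ times the counit of $\ind\dashv\omega$ is a natural retraction of the unit by (1). Part (3) is then immediate once (4) is known: with $\iota$ the unit of $\omega\dashv\ind$ at $X$ and $\pi:=\tfrac{1}{|G|}$ times the counit of $\ind\dashv\omega$ at $X$, one has $\pi\iota=\id_X$ in $\A^G$, so $\iota\pi$ is an idempotent endomorphism of $\ind\omega(X)$ in $\A^G$ that splits by (4), exhibiting $X$ as a direct summand of $\ind\omega(X)$.

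For (4), let $p$ be an idempotent endomorphism of $X\in\A^G$. Split it in $\A$ as $p=\iota\pi$ with $\pi\iota=\id_Y$, and put $\lambda_g^Y:=\pi\circ\lambda_g^X\circ F_g(\iota)$. The invertibility of $\lambda_g^Y$, with inverse $F_g(\pi)\circ(\lambda_g^X)^{-1}\circ\iota$, as well as the equivariance of $\iota$ and $\pi$, follow from the equivariance relation $\lambda_g^X F_g(p)=p\lambda_g^X$ combined with $p\iota=\iota$ and $\pi p=\pi$. The main, and only mildly delicate, obstacle is the $2$-cocycle identity for $\{\lambda_g^Y\}$; my plan is to expand $\lambda_g^Y\circ F_g(\lambda_h^Y)$, insert $\iota\pi=p$ at the inner spot, use equivariance of $p$ to move the resulting $F_g(p)$ past $\lambda_g^X$ and absorb it using $\pi p=\pi$, apply the cocycle identity for $\{\lambda_g^X\}$, and finally use naturality of $\delta_{g,h}$ to push the resulting $\delta_{g,h,X}$ through $F_gF_h(\iota)$ so as to land on $\lambda_{hg}^Y\circ \delta_{g,h,Y}$.
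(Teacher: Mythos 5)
Your proposal is correct and follows essentially the same route as the paper: the paper simply delegates (1) to \cite[Lemma 4.6]{DGNO} and (4) to \cite[Lemma 2.3]{chen}, and your deductions of (2) via Lemma~\ref{adjoint separable} and of (3) from the splitting of the idempotent $\frac{1}{|G|}\nu_X\epsilon_X$ are exactly the paper's arguments. The explicit unit/counit formulas and the transported equivariant structure $\lambda_g^Y=\pi\circ\lambda_g^X\circ F_g(\iota)$ that you write out are the standard constructions underlying those citations, and all the verifications you sketch do go through.
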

\begin{proof}
	For (1), see e.g. \cite[Lemma 4.6]{DGNO}. (3) follows from (1). That $\omega$ is separable under the assumption that $|G|$ is invertible in $\A$ follows from (1) and Lemma~\ref{adjoint separable}. The argument for $\ind$ being separable is similar since the composition $A\ra \omega\circ \ind(A)\ra A$ of adjunctions is $\id_A$. (4) is \cite[Lemma 2.3]{chen}.
\end{proof}

	Let $\C$ be a strictly full additive subcategory of $\A$. We say that $\C$ is $G$-invariant if for $g\in G$, $F_g(\C)\subset \C$, equivalently, for $g\in G$, $F_g(\C)=\C$.
	In this case, the $G$-action on $\A$ restricts to an action on $\C$. $\C^G$ is naturally identified with the  full subcategory $\omega^{-1}(\C)$ of $\A^G$. Thus we will not distinguish $\C^G$ with $\omega^{-1}(\C)$. If $A\in\A$ is $G$-invariant then we have $\add \ind(A)=(\add A)^G,$ where $\add A$ refers to the full subcategory of $\A$ consisting of direct summands of finite direct sums of $A$.

We will also need to consider restriction of a $G$-action on $\A$ to an action of a subgroup $H$ of $G$.  Let $G/H=\{Hg_1,\dots, Hg_n\}$, where $\{g_1=e,\dots, g_n\}$ is a fixed complete set of representatives of right cosets of $H$ in $G$.  
The $G$-action  on $\A$ restricts to an $H$-action  on $\A$ in an obvious way. We have an obvious restriction functor $\res^H_G\colon \A^G\ra \A^H$, generalizing the forgetful functor above, and an induction functor $\ind_H^G\colon \A^H\ra \A^G$,  generalizing the induction functor above, such that $\ind((A, \{\lambda_h^A\mid h\in H\}))$ is $\oplus_{i=1}^n F_{g_i}(A)$ endowed with an obvious equivariant structure. 
Still, we have the following similar fact.
\begin{lem}
	\begin{enum2}
	\item The triple  $(\ind_H^G, \res^H_G, \ind_H^G)$ is an adjoint triple  such that   the composition $U \ra \res^H_G\circ \ind_H^G(U)\ra U$ of adjunctions  is $\id_U$ and the composition $X\ra \ind_H^G\circ \res^H_G(X)\ra X$ of adjunctions  is $|G/H|\cdot \id_X$.
	\item $\ind^G_H$ is a separable functor; if $|G/H|$ is invertible in $\A$ then $\res^H_G$ is a separable functor.
	\end{enum2}
\end{lem}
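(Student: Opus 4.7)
The plan is to mirror the proof of Lemma~\ref{adjoint}, which handles the special case $H=\{e\}$, while tracking the extra bookkeeping coming from the cosets. For $g\in G$ and $i\in\{1,\dots,n\}$, write uniquely $g_ig=h_i(g)\,g_{\sigma(g)(i)}$ with $h_i(g)\in H$ and $\sigma(g)\in\mathfrak{S}_n$; this encodes the right action of $G$ on $H\backslash G$ and dictates the ``obvious'' $G$-equivariant structure on $\ind_H^G(U)=\oplus_{i=1}^n F_{g_i}(A)$ for $U=(A,\{\lambda_h^A\})\in\A^H$. Concretely, for $X=(B,\{\lambda_g^B\})\in\A^G$, I would take the unit of $(\ind_H^G\dashv\res^H_G)$ to be the inclusion $\eta_U\colon A\xrightarrow{\Xi_A^{-1}} F_{g_1}(A)\monic\oplus_i F_{g_i}(A)$ as the $g_1=e$-summand, and the counit to be $\epsilon_X=[\lambda_{g_i}^B]_i\colon \oplus_i F_{g_i}(B)\ra B$. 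Dually, for $(\res^H_G\dashv\ind_H^G)$ I would take the counit $\tilde\epsilon_U$ to be the projection from the $g_1$-summand composed with $\Xi_A$, and the unit $\tilde\eta_X=((\lambda_{g_i}^B)^{-1})_i\colon B\ra \oplus_i F_{g_i}(B)$.

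With these adjunctions in place, the two composition identities in (1) are direct. The composition $U\ra\res^H_G\ind_H^G(U)\ra U$ is the inclusion into and projection off of the $g_1$-summand, giving $\Xi_A\circ\Xi_A^{-1}=\id_U$; the composition $X\ra\ind_H^G\res^H_G(X)\ra X$ equals $\sum_{i=1}^n \lambda_{g_i}^B\circ(\lambda_{g_i}^B)^{-1}=n\cdot\id_X=|G/H|\cdot\id_X$. Part (2) then follows from Lemma~\ref{adjoint separable}(2): the first composition provides the retraction $\xi$ required for $\ind_H^G$ (as the left adjoint in $(\ind_H^G\dashv\res^H_G)$) directly, and when $|G/H|$ is invertible in $\A$, dividing the second composition by $|G/H|$ provides the retraction required for $\res^H_G$ (as the left adjoint in $(\res^H_G\dashv\ind_H^G)$).

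The main obstacle is the combinatorial coherence: showing that the ``obvious'' $G$-equivariant structure on $\ind_H^G(U)$ is well-defined, i.e.\ that the $\delta$'s, the permutations $\sigma(g)$ and the $\lambda_{h_i(g)}^A$'s fit together to satisfy the $2$-cocycle condition, and that each of $\eta_U$, $\tilde\epsilon_U$, $\tilde\eta_X$, $\epsilon_X$ is indeed a morphism in the appropriate category of equivariant objects. These checks reduce, via the $2$-cocycle condition on $\delta$ and the naturality package encoded by Lemma~\ref{associativity} and Corollary~\ref{commutativity}, to straightforward (if laborious) diagram chases; once these are in hand, naturality and the triangle identities are routine.
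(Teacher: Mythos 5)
Your construction is correct and is exactly the argument the paper intends: it states this lemma without proof as a ``similar fact'' to Lemma~\ref{adjoint}, and your explicit units and counits (inclusion of/projection onto the $g_1=e$-summand via $\Xi_A^{\pm1}$, and $[\lambda_{g_i}^B]_i$ together with its componentwise inverse), the resulting identities $\id_U$ and $|G/H|\cdot\id_X$, and the appeal to Lemma~\ref{adjoint separable} for separability are precisely the natural generalization of that proof. The coherence checks you defer (the $2$-cocycle condition for the equivariant structure on $\ind_H^G(U)$ and equivariance of the four structure maps) are indeed routine consequences of the cocycle identity and Lemma~\ref{associativity}, so nothing is missing.
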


We now investigate the relation between equivariantization and idempotent-completion. 
Recall that  the idempotent-completion $\widehat{\A}$ of $\A$ is defined as follows:    
objects of $\widehat{\A}$ are pairs $(A,a)$, where $A\in \A$ and $a\in \hom_\A(A,A)$ is an idempotent; a morphism $f\colon (A_1,a_1)\ra  (A_2, a_2)$ in $\widehat{\A}$ is a morphism $f\colon A_1\ra A_2$ in $\A$ satisfying $f=a_2fa_1.$ We have a canonical functor $\iota\colon \A\ra \widehat{\A}$ sending an object $A\in \A$ to $(A,1)\in \widehat{\A}$ and sending a morphism $f$ in $\A$ to $f$, which is an equivalence up to direct summands. Given an additive functor $F\colon  \A\ra \B$ between two additive categories, there is an  induced additive functor $\widehat{F}\colon \widehat{\A}\ra \widehat{\B}$ sending an object $(A,a)\in \widehat{\A}$ to $(F(A), F(a))\in \widehat{\B}$ and sending a morphism $f$ in $\widehat{\A}$ to $F(f)$. $\widehat{F}$ is an equivalence iff $F$ is an equivalence up to retracts.  

Observe that a $G$-action $\{F_g, \delta_{g,h}\}$ on $\A$ induces a $G$-action $\{\widehat{F}_g, \widehat{\delta}_{g,h}\}$ on  $\widehat{\A}$, where  $\widehat{F}_g\colon \widehat{\A}\ra \widehat{\A}$ is induced by the autoequivalence $F_g\colon \A\ra \A$ and $\widehat{\delta}_{g,h}$ is defined by $\widehat{\delta}_{g,h,(A,a)}=\delta_{g,h,A}\circ F_gF_h(a)$.

	\begin{lem}
		\label{idem}
There is a fully faithful functor  $  \widehat{\A^G}\ra \widehat{\A}^G $ that is an equivalence when $|G|$ is invertible in $\A$. 
\end{lem}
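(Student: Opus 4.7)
The plan is to construct an explicit functor $\Theta\colon \widehat{\A^G}\ra \widehat{\A}^G$, verify full faithfulness via a direct hom-set comparison, and then obtain essential surjectivity (under the $|G|$-invertibility hypothesis) by a standard descent-from-induction argument combined with idempotent-completeness of $\widehat{\A^G}$.

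I would define $\Theta$ on objects by
\[((A,\{\lambda_g\}),a)\longmapsto ((A,a),\,\{\mu_g:=a\lambda_g\}_{g\in G})\]
and as the identity on underlying morphisms. Well-definedness hinges on the equivariance of the idempotent $a$ (i.e., $a\lambda_g=\lambda_g F_g(a)$): this ensures each $\mu_g$ is an isomorphism $(F_gA,F_ga)\ra(A,a)$ in $\widehat{\A}$ with explicit inverse $F_g(a)\lambda_g^{-1}$, and reduces the cocycle identity for $\{\mu_g\}$ in $\widehat{\A}^G$ (relative to $\widehat{\delta}_{g,h,(A,a)}=\delta_{g,h,A}\circ F_gF_h(a)$) to the cocycle identity for $\{\lambda_g\}$.

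For full faithfulness, morphisms $f\colon X_1\ra X_2$ in $\widehat{\A^G}$ with $X_i=((A_i,\{\lambda^{A_i}_g\}),a_i)$ are maps $f\colon A_1\ra A_2$ satisfying $a_2fa_1=f$ together with the equivariance relation (*): $\lambda^{A_2}_g F_g(f)=f\lambda^{A_1}_g$, while those $\Theta X_1\ra \Theta X_2$ in $\widehat{\A}^G$ satisfy $a_2fa_1=f$ together with the a priori weaker condition (**): $a_2\lambda^{A_2}_g F_g(f)=fa_1\lambda^{A_1}_g$. The forward implication (*)$\Rightarrow$(**) is immediate; for the converse, I would substitute $f=a_2fa_1$ into the left side of (*) and invoke the identities $\lambda^{A_2}_g F_g(a_2)=a_2\lambda^{A_2}_g$ and $\lambda^{A_1}_g F_g(a_1)=a_1\lambda^{A_1}_g$ (equivariance of the $a_i$), reducing the computation to (**) together with $a_i^2=a_i$. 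This yields full faithfulness.

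For essential surjectivity assuming $|G|$ is invertible in $\A$ (hence in $\widehat{\A}$), fix $X\in\widehat{\A}^G$ with $\omega(X)=(A,a)$ and apply Lemma~\ref{adjoint}(3) to the induced $G$-action on the idempotent-complete category $\widehat{\A}$ to exhibit $X$ as a direct summand of $\ind^G_{\widehat{\A}}(A,a)$. A direct computation, using naturality of $\delta_{g,h}$ to match components $F_{hg}(a)\circ\delta_{g,h,A}=\delta_{g,h,A}\circ F_gF_h(a)$, identifies $\ind^G_{\widehat{\A}}(A,a)$ with $\Theta(\ind^G_\A(A),\ind^G_\A(a))$, so it lies in the essential image of $\Theta$. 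Since $\Theta$ is fully faithful and $\widehat{\A^G}$ is idempotent-complete by construction, the idempotent in $\End_{\widehat{\A}^G}(\ind^G_{\widehat{\A}}(A,a))$ projecting onto $X$ corresponds to an idempotent in $\End_{\widehat{\A^G}}(\ind^G_\A(A),\ind^G_\A(a))$ which splits, producing $Y\in\widehat{\A^G}$ with $\Theta Y\cong X$.

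The main obstacle is the converse direction of the full faithfulness step: this is the only place where the equivariance of the idempotents $a_i$ plays an essential role, and without this upgrade from (**) to (*) the functor $\Theta$ would only be faithful. Once this point is settled, the rest is a formal consequence of Lemma~\ref{adjoint}(3), the compatibility of induction with idempotent completion, and the universal property of idempotent splitting.
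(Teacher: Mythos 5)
Your proposal is correct and follows essentially the same route as the paper: the same functor $((A,\{\lambda_g\}),a)\mapsto((A,a),\{a\circ\lambda_g\})$, full faithfulness by direct hom-set comparison, and essential surjectivity by exhibiting each $X\in\widehat{\A}^G$ as a direct summand of $\ind(\omega(X))=\Theta(\ind(A),\ind(a))$ and splitting the idempotent in the idempotent-complete category $\widehat{\A^G}$. The only difference is that you spell out the verification of the converse implication $(**)\Rightarrow(*)$, which the paper leaves to the reader.
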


\begin{proof}
	Define a functor $F\colon \widehat{\A^G} \ra   \widehat{\A}^G$ such that \[F(((A, \{\lambda_g^A\}),a))=((A,a), \{\lambda_g^{(A,a)}=a\circ \lambda_g^A\})\] and such that $F(f)=f$ for a morhpim $f$ in $\widehat{\A}^G$. $F$ is well-defined and one readily sees that $F$ is fully faithful. Now suppose $|G|$ is invertible in $\A$.  By Lemma~\ref{adjoint}(4), $\widehat{\A}^G$ is idempotent-complete. Then each object $((A,a), \{\lambda_g^{(A,a)}\})$ in $\widehat{\A}^G$ is a direct summand of $\ind((A,a))$. 
Since $\widehat{\A^G}$ is idempotent-complete, to see that $F$ is an equivalence, it sufficies to note that \[\ind((A,a))=F((\ind(A), \ind(a)))\] lies in the essential image of $F$. 
\end{proof}

\begin{rmk}
	Given an action of $G$ on $\A$, one can associate  another category $\A/G$, the orbit category. $\A/G$ has the same objects with $\A$. For $A, B\in \A/G$, \[\hom_{\A/G}(A,B):=\oplus_{g\in G}\hom_\A(A, F_g(B)).\] Composition of morphisms is defined in an obvious way.  It's shown in \cite[Proposition 2.4]{chen} that there is a  fully faithful comparison functor \[\ind\colon \A/G\lra \A^G, \quad X\mapsto \ind(X),\] which is an equivalence up to retracts if  $|G|$  is invertible in $\A$. 
	In constrast, a severe problem with $\A/G$ is that $\A/G$ need not be idempotent-complete even if $\A$ is. 
\end{rmk}

\subsection{G-equivariant functors}\label{sec: functor}
 For additive categories with $G$-actions, it's very natural to consider functors between them that are compatible with $G$-actions.  In the language of monoidal categories, such a functor bears the name of a module functor~\cite{Ostrik}. 

Let $\A$, $\B$  and $\C$ be additive categories equipped with $G$-actions $\{M_g, \alpha_{g,h}\}$, $\{N_g, \beta_{g,h}\}$ and $\{P_g, \gamma_{g,h}\}$, respectively.
\begin{defn} 
	\begin{enum2}
\item  An additive functor $F\colon \A\ra \B$ is called $G$-equivariant if there exists a collection $\{\phi_g\colon  FM_g  \overset{\cong}{\ra}  N_gF\mid g\in G\}$ of natural isomorphisms such that for all $g,h\in G$ the following diagrams commute 
	\begin{equation}\label{equiv functor}
		\begin{split}
			\xymatrix@C-10pt{FM_gM_h  \ar[r]^{\phi_gM_h}\ar[d]_{F\alpha_{g,h}} & N_gFM_h \ar[r]^{N_g\phi_h} &    N_gN_hF\ar[d]^{\beta_{g,h}F} & &  FM_e \ar[rr]^{\phi_e} \ar[dr]_{F\Xi} && N_e F \ar[dl]^{\Upsilon F}\\
	FM_{hg}  \ar[rr]^{\phi_{hg}} & &  N_{hg}F, && & F, &}
\end{split}
\end{equation}
where $\Xi$ resp. $\Upsilon$ is the unit of the $G$-action on $\A$ resp. $\B$. 

\item Let $F_1, F_2\colon \A\ra \B$ be two  $G$-equivariant additive functors with equivariant structures $\{\phi^1_g\}$, $\{\phi^2_g\}$, respectively. Let $\theta\colon F_1\ra F_2$ be a natural transform. $\theta$ is called $G$-equivariant if for $g\in G$, the following diagram commutes 
	\begin{equation}
		\begin{split}
			\xymatrix{F_1M_g\ar[r]^{\theta M_g} \ar[d]_{\phi^1_g} & F_2M_g \ar[d]^{\phi_g^2}\\
	N_gF_1\ar[r]^{N_g\theta} & N_gF_2.}
\end{split}
\end{equation}
\end{enum2}
\end{defn}

Some immediate observations are in order:
\begin{enum2}
\item A $G$-equivariant functor $F\colon \A\ra \B$ induces an additive functor $F^G\colon \A^G\ra \B^G$. For $(A,\{\lambda^A_g\})\in \A^G$, $F^G((A, \lambda_g^A))=(FA, \{\kappa_g^{FA}\})$, where $\kappa_g^{FA}=  F(\lambda^A_g)\circ \phi_{g,A}^{-1}$; for a morphism $f$ in $\A^G$, $F^G(f)=F(f)$. We have an equality $\omega\circ F^G=F\circ \omega$ and a natural isomorphism $\oplus\phi_g\colon    F^G\circ \ind \overset{\cong}{\ra}    \ind \circ F $. 

\item If $\A_1$ is a $G$-invariant additive subcategory of $\A$ and  $\iota\colon \A_1\monic \A$ is the inclusion functor then $\iota$ is naturally $G$-equivariant. In particular, $\id_\A$ is naturally $G$-equivariant and we have $\id_\A^G=\id_{\A^G}$.

\item Let $F\colon \A\ra \B, H\colon \B\ra \C$ be two $G$-equivariant functors with equivariant structures $\{\phi_g\colon F M_g   \overset{\cong}{\ra}  N_g F \}$, $\{\varphi_g\colon  HN_g  \overset{\cong}{\ra} P_g H  \}$, respectively. For $g\in G$, define $\tau_g= \varphi_g F   \circ H\phi_g $. It's direct to check that $\{\tau_g\}$ makes $H\circ F$ a $G$-equivariant functor and that $(H\circ F)^G=H^G\circ F^G$.  

\item Let $F_1, F_2\colon \A\ra \B$ be two $G$-equivariant functors and $\theta\colon F_1\ra F_2$ a $G$-equivariant natural transformation. $\theta$ induces a natural transformation $\theta^G\colon F_1^G\ra F_2^G$ such that $\theta^G_{(A,\{\lambda_g^A\})}=\theta_A$. 
\end{enum2}

Now we show that an adjoint of a $G$-equivariant funcor is also $G$-equivariant. This may remind one of the fact that an adjoint of an exact functor between triangulated categories is also exact.

\begin{lem}\label{equiv adjoint}
	Let $\A,\B$ be additive categories with $G$-actions. Suppose $F: \A\rightleftarrows \B: H$ is an adjoint pair. If $F$ is $G$-equivariant then  $H$ is naturally $G$-equivariant for which the unit $\nu\colon \id \ra HF$ and the counit $\mu\colon FH\ra \id$ are $G$-equivariant. Moreover, $(F^G, H^G)$ is an adjoint pair with unit $\nu^G$ and counit $\mu^G$. A similar result holds if $H$ is $G$-equivariant.
\end{lem}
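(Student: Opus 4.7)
The plan is to construct an equivariant structure on $H$ by the standard mate calculus for the adjunction $F\dashv H$. Writing $\nu\colon \id_\A \ra HF$ and $\mu\colon FH\ra \id_\B$ for the unit and counit, and $\{\phi_g\colon FM_g \eqiv N_gF\}$ for the equivariant structure on $F$, I would define for each $g\in G$ the natural transformation
\[
\psi_g\colon HN_g \overset{HN_g\nu}{\lra} HN_gFH \overset{H\phi_g^{-1}H}{\lra} HFM_gH \overset{\mu M_g H}{\lra} M_gH,
\]
and check that it is a natural isomorphism via a short argument using the triangle identities for $F\dashv H$ and invertibility of $\phi_g$ (equivalently, $\psi_g$ is the mate of $\phi_g^{-1}$ under $F\dashv H$ on both sides).

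The second step is to verify the two diagrams of~\eqref{equiv functor} for $\{\psi_g\}$. The unit condition involving $\Xi$ and $\Upsilon$ reduces, after unpacking $\psi_e$, to a single triangle identity for $F\dashv H$ combined with the defining relations $F\Xi=\phi_e^{-1}\circ(\Upsilon F)$ that come from $F$ being equivariant. The cocycle condition is obtained by stacking the defining compositions of $\psi_g$ and $\psi_h$, inserting cancelling $\nu$–$\mu$ pairs in the middle so as to expose $H\phi_gH\cdot HF\phi_hH$, and then collapsing via the cocycle condition for $\{\phi_g\}$ together with Lemma~\ref{associativity} to reorganize $\alpha_{g,h}$ and $\beta_{g,h}$. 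This diagram chase is mechanical but somewhat lengthy, and I expect it to be the main technical step.

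The $G$-equivariance of $\nu$ and $\mu$ is essentially baked into the mate formula: the square expressing $G$-equivariance of $\mu$ unfolds, upon substituting the definition of $\psi_g$, into naturality squares glued together along a triangle identity, and the square for $\nu$ is handled symmetrically. With this in hand, $F^G$ and $H^G$ are defined, and $\nu$, $\mu$ induce well-defined natural transformations $\nu^G\colon \id_{\A^G}\ra H^GF^G$ and $\mu^G\colon F^GH^G\ra \id_{\B^G}$. The triangle identities for $(F^G, H^G)$ follow from those for $(F,H)$ upon applying the faithful forgetful functors $\omega$ of Lemma~\ref{adjoint}(1), so $(F^G,H^G)$ is an adjoint pair with unit $\nu^G$ and counit $\mu^G$. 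The dual case where $H$ is assumed $G$-equivariant is proved by reversing the roles in the mate construction.
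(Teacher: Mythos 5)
Your overall strategy --- produce the equivariant structure on $H$ as a mate of the one on $F$, then chase diagrams --- is the same in spirit as the paper's, but your displayed formula for $\psi_g$ does not typecheck, and the problem is not merely notational. With $F\dashv H$ the unit is $\nu\colon \id_\A\ra HF$ and the counit is $\mu\colon FH\ra \id_\B$; the arrow $HN_g\ra HN_gFH$ that you label $HN_g\nu$ would require a natural transformation $\id_\B\ra FH$, and the arrow $HFM_gH\ra M_gH$ that you label $\mu M_gH$ would require one $HF\ra \id_\A$. Those are the unit and counit of an adjunction $H\dashv F$, which is not assumed. Under $F\dashv H$ alone, the mate of $\phi_g\colon FM_g\ra N_gF$ is the composite $HN_g\mu\circ H\phi_g H\circ \nu M_gH\colon M_gH\ra HN_g$, which points in the wrong direction for an equivariant structure on $H$; and you cannot simply invert it by invoking ``triangle identities and invertibility of $\phi_g$,'' because the mate of an isomorphism is in general not an isomorphism (this is exactly the familiar failure of Beck--Chevalley-type base-change maps). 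A symptom of the same direction confusion appears in your unit condition, where $\phi_e^{-1}\circ(\Upsilon F)$ does not compose.

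What closes the gap --- and what the paper actually uses --- are the extra adjunctions supplied by the group action: each $M_g$ has $M_{g^{-1}}$ as a two-sided adjoint via the $\delta$'s, and likewise for $N_g$. The paper defines $\sigma_g\colon HN_g\ra M_gH$ through the chain of natural isomorphisms $\hom(A,HN_gB)\cong\hom(FA,N_gB)\cong\hom(N_{g^{-1}}FA,B)\cong\hom(FM_{g^{-1}}A,B)\cong\hom(M_{g^{-1}}A,HB)\cong\hom(A,M_gHB)$, which uses $\phi_{g^{-1}}$ and the adjunctions $(N_{g^{-1}},N_g)$ and $(M_{g^{-1}},M_g)$ in addition to $F\dashv H$, and which is invertible by construction (Yoneda). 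Any purely $2$-categorical formula must likewise insert $M_{g^{-1}}$ and $N_{g^{-1}}$. The rest of your outline --- coherence via the cocycle condition and Lemma~\ref{associativity} (for which the paper uses Corollary~\ref{commutativity}), equivariance of $\nu$ and $\mu$, and descent of the triangle identities through the faithful forgetful functor --- is sound once a correctly defined $\sigma_g$ is in place, and agrees with the paper's proof.
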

\begin{proof}
	In this proof, we will sometimes write a morphism set $\hom(C,D)$ as $(C, D)$ for short and one will see that there will be no ambiguity about which category the morphism set belongs to. 
	Let $\{M_g, \alpha_{g,h}\}$ resp. $\{N_g, \beta_{g,h}\}$ be the $G$-action on $\A$ resp. $\B$ with unit $\Xi$ resp. $\Upsilon$. For $g\in G$, let \[\alpha_g=\Xi\circ \alpha_{g^{-1},g},\quad \beta_g=\Upsilon\circ \beta_{g^{-1},g}.\]

	Suppose $\{\phi_g\colon FM_g\overset{\cong}{\ra} N_g F\}$ makes $F$ $G$-equivariant. 
For $g\in G$,  the composition of the following  isomorphisms 
	\[(A,HN_gB)  \ra (FA, N_gB)\ra (N_{g^{-1}}FA, B) 
	\ra (FM_{g^{-1}}A, B)\ra  (M_{g^{-1}}A, HB) \ra (A, M_gH B)\]
defines a natural isomorphism $\sigma_g\colon H N_g\ra M_g H$. 
We will show that $\{\sigma_g\}$ makes $H$ $G$-equivariant.  Consider the following diagram
	\[\begin{footnotesize}\xymatrix@R-10pt@C-20pt{(FM_gM_h A, B)\ar[rr]\ar[dr]\ar[dd] && (N_g F M_h A, B)\ar[rr]\ar@{-}[d] && (N_gN_h F A, B)\ar[dr]\ar@{-}[d] &\\
		& (F M_{hg} A, B)\ar[rrrr] \ar[dd] &\ar[d]&&\ar[d] & (N_{hg} F A, B)\ar[dd]\\
		(A, M_{h^{-1}} M_{g^{-1}} H B)\ar@{-}[r]\ar[dr] &\ar[r]&(A, M_{h^{-1}}HN_{g^{-1}} B)\ar[rr] && (A, H N_{h^{-1}}N_{g^{-1}} B)\ar[dr] &\\
		& (A, M_{(hg)^{-1}} H B)\ar[rrrr] &&&& (A, HN_{(hg)^{-1}} B),}\end{footnotesize}\]
where each vertical arrow is the  adjunction map of the corresponding adjoint pair,   the top rectangle is induced by the rectangular diagram of \eqref{equiv functor} and the bottom rectangle is induced by a similar diagram for $H$ involving certain $\sigma_g$'s. 
Since $F$ is $G$-equivariant, the top rectangle commutes.  
One can see that the two squares on the left and right side commute, where one needs to use Corollary~\ref{commutativity}, and that the  rectangles on the front and back side commute, where one needs to use the definition of $\sigma_g$'s.   So the bottom rectangle commutes. This establishs the rectangular commutative diagram for $H$ to be $G$-equivariant. The triangular commutative diagram for $H$ to be $G$-equivariant is easy to be established. Thus $H$ is indeed $G$-equivariant. 

Now we show that  the unit $\nu\colon \id \ra HF$ is $G$-equivariant. For $A\in \A$ and $g\in G$, since $F$ is $G$-equivariant, we have a commutative diagram 
\[\xymatrix@C+10pt{(FA, FA)\ar[r]^{-\circ F\alpha_{g,A}} \ar[d]_{N_g} & (FM_{g^{-1}}M_g A, FA)\ar[r]^{-\circ \phi^{-1}_{g^{-1}, M_g A}} & (N_{g^{-1}}FM_g A, FA)\ar[d]\\
(N_g FA, N_g FA)\ar[rr]^{-\circ \phi_{g, A}} && (FM_g A, N_g FA),}\]
where the unadorned arrow is given by the corresponding adjoint pair.  Moreover, there is a unique morphism \[\theta_{g,A}\colon \hom(FA, FA)\lra \hom(FM_g A, FM_g A)\] rendering commutative the  following diagram
\[\xymatrix{(FA, FA)\ar[r]^{N_g}\ar[d]_{\theta_{g,A}} & (N_g FA, N_g FA)\ar[d]^{-\circ \phi_{g,A}}\\
(FM_g A, FM_g A)\ar[r]^{\phi_{g, A}\circ -} & (FM_g A, N_g FA).}\]
Note that $\theta_{g,A}(\id_{FA})=\id_{FM_gA}$. 
Then it is ready to see the existence of  the following  commutative diagram 
\[\begin{footnotesize}\xymatrix@R-10pt{(M_gA, M_g HF A)\ar@{=}[r]  & (M_g A, M_g HF A) \ar[d]\ar@{=}[r] &  (M_g A, M_g HF A)\ar@/^3.5pc/[dddd]^{\sigma_{g,FA}\circ -}\\
	(A, HF A)  \ar[d]\ar[u]^{M_g}\ar[r]^{-\circ \alpha_{g, A}} & (M_{g^{-1}}M_g A, HF A) \ar[d] & \\
	(FA, FA)  \ar[r]^{-\circ F\alpha_{g, A}} \ar[d]_{\theta_{g, A}} & (FM_{g^{-1}}M_g A, FA)\ar[r]^{-\circ \phi^{-1}_{g^{-1}, M_gA}} & (N_{g^{-1}}FM_g A, FA)\ar[d] \\
	(FM_g A, FM_g A)\ar[rr]^{\phi_{g, A}\circ -} \ar[d] && (FM_g A, N_g FA)\ar[d] \\
	(M_g A, HFM_g A)\ar[rr]^{H\phi_{g,A}\circ -} && (M_g A, HN_g F A),}
\end{footnotesize}\]
where each unadorned arrow is given by the corresponding adjoint pair. 
It follows that we have $H\phi_g\circ \nu M_g=\sigma_g F\circ M_g \nu$ for $g\in G$. That is, $\nu$ is $G$-equivariant. So we have proved that $H$ is $G$-equivariant and the unit $\nu$ of the adjoint pair $(F, H)$  is $G$-equivariant if $F$ is $G$-equivariant. 
Then by duality, we can see that the counit $\mu$ is also $G$-equivariant.

And then it is easy to show that the adjunction map \[\hom_\B(FA, B) \overset{\cong}{\lra} \hom_\A(A, H B)\] is $G$-equivariant if $A$, $B$ are equivariant and $FA$, $HB$ are endowed with the induced equivariant structure.  It follows that we have an adjoint pair \[\xymatrix{F^G: \A^G\ar@<0.5ex>[r] & \ar@<0.5ex>[l] \B^G  :H^G}\]  with unit $\nu^G$ and counit $\mu^G$. 

By duality, we see that a similar result holds if $H$ is $G$-equivariant.
\end{proof}

A $G$-equivariant functor that is fully faithful (resp. an equivalence) induces a fully faithful functor (resp. an equivalence). 
\begin{prop}\label{G equiv}
	Suppose $\A$, $\B$ are additive categories with $G$-actions. 
	Let $F\colon \A\ra \B$ be a $G$-equivariant functor. 
	 If $F\colon \A\ra \B$ is an equivalence (resp. fully faithful)  then so is $F^G\colon \A^G\ra \B^G$. 

\end{prop}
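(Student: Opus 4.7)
My plan is to split the statement into two parts: fully faithfulness (which is the main content) and then essential surjectivity under the additional hypothesis that $F$ is an equivalence. Throughout I write $\{M_g, \alpha_{g,h}\}$ for the $G$-action on $\A$, $\{N_g, \beta_{g,h}\}$ for the $G$-action on $\B$, and $\{\phi_g\colon FM_g \overset{\cong}{\to} N_gF\}$ for the $G$-equivariant structure on $F$, so that $F^G((A,\{\lambda_g^A\})) = (FA, \{\kappa_g^{FA}\})$ with $\kappa_g^{FA} = F(\lambda_g^A)\circ \phi_{g,A}^{-1}$.

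For fully faithfulness, the key point is that $F$ intertwines the $G$-actions on Hom sets induced by the equivariant structures. Concretely, for $f\in \hom_\A(A,A')$ with $A, A'$ equivariant, I would use naturality of $\phi_g$ (applied to the morphism $f$) to rewrite $F(M_g f) = \phi_{g,A'}^{-1}\circ N_gF(f)\circ \phi_{g,A}$, and then a short manipulation shows
\[F(\lambda_g^{A'}\circ M_g(f)\circ (\lambda_g^A)^{-1}) = \kappa_g^{FA'}\circ N_g F(f)\circ (\kappa_g^{FA})^{-1},\]
i.e. $F(f.g) = F(f).g$. Since $F$ is faithful, $f$ is $G$-invariant iff $F(f)$ is $G$-invariant; combined with the fact that $F$ is bijective on Hom sets, this shows $F^G$ is bijective on Hom sets in $\A^G$ and $\B^G$, i.e.\ $F^G$ is fully faithful.

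For essential surjectivity, given $(B,\{\mu_g^B\})\in \B^G$ I would use essential surjectivity of $F$ to pick $A\in \A$ and an isomorphism $\alpha\colon FA\overset{\cong}{\to} B$ in $\B$. Then for each $g\in G$ I use full faithfulness of $F$ to define $\lambda_g^A\colon M_gA \to A$ as the unique morphism satisfying
\[F(\lambda_g^A) = \alpha^{-1}\circ \mu_g^B\circ N_g(\alpha)\circ \phi_{g,A};\]
this is automatically an isomorphism. By construction $\alpha$ will be an isomorphism of equivariant objects $F^G((A,\{\lambda_g^A\})) \cong (B,\{\mu_g^B\})$ once we know $(A,\{\lambda_g^A\})$ really lies in $\A^G$.

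The main obstacle — but still a routine one — is verifying the cocycle condition $\lambda_g^A\circ M_g(\lambda_h^A) = \lambda_{hg}^A\circ \alpha_{g,h,A}$ and the unit condition $\lambda_e^A = \Xi_A$ for the lifted structure. Since $F$ is faithful, it is enough to check these after applying $F$. Substituting the defining formula for $F(\lambda_g^A)$, the cocycle identity reduces to a diagram chase combining (i) the cocycle $\mu_g^B\circ N_g(\mu_h^B) = \mu_{hg}^B\circ \beta_{g,h,B}$ in $\B^G$, (ii) naturality of $\alpha$ and of the $\phi_g$'s, and (iii) precisely the rectangular diagram of \eqref{equiv functor} evaluated at $A$, which expresses $\beta_{g,h,FA}\circ N_g(\phi_{h,A})\circ \phi_{g,M_hA} = \phi_{hg,A}\circ F(\alpha_{g,h,A})$. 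The unit condition follows similarly from the triangular diagram of \eqref{equiv functor}. Once both are checked, $(A,\{\lambda_g^A\})\in \A^G$ and $\alpha$ exhibits it as a preimage of $(B,\{\mu_g^B\})$ up to isomorphism, completing the proof.
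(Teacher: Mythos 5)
Your proof is correct, but for the equivalence case it takes a genuinely different route from the paper. The paper dispatches the whole proposition in two lines: full faithfulness of $F^G$ is declared obvious (essentially your observation that $F(f.g)=F(f).g$, so $F$ restricts to a bijection on $G$-invariant morphisms), and essential surjectivity is deduced from Lemma~\ref{equiv adjoint} --- a quasi-inverse $H$ of $F$ is an adjoint, hence naturally $G$-equivariant, and $(F^G,H^G)$ is an adjoint pair whose unit $\nu^G$ and counit $\mu^G$ are invertible because $\nu$ and $\mu$ are, so $F^G$ is an equivalence. You instead transport the equivariant structure directly: given $(B,\{\mu_g^B\})$ you choose $\alpha\colon FA\overset{\cong}{\to} B$, define $\lambda_g^A$ by full faithfulness, and verify the cocycle condition by applying the faithful $F$ and chasing through naturality of $\phi_g$, $\beta_{g,h}$ and the rectangular diagram of \eqref{equiv functor}; this computation is correct (and the unit condition $\lambda_e^A=\Xi_A$ is in fact automatic from the cocycle condition, as the paper notes after the definition of an equivariant object, so that check is redundant). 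Your argument is more elementary and self-contained, avoiding the rather heavy Lemma~\ref{equiv adjoint}; the paper's argument is shorter given that lemma and yields the extra information that a quasi-inverse of $F^G$ is $H^G$ for a canonical equivariant structure on $H$.
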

\begin{proof}
	The assertion when $F$ is fully faithful is obvious. The assertion when $F$ is an equivalence follows readily from  this and Lemma~\ref{equiv adjoint}. 
\end{proof}

Here are  examples of $G$-equivariant functors.

\begin{exm}\label{exidem}
	Suppose $\A$ has a $G$-action. Let  the idempotent-completion $\widehat{\A}$ of $\A$ be endowed with the induced $G$-action. 
	It's easy to check that the canonical functor $\iota\colon \A\ra \widehat{\A}$ is $G$-equivariant. Thus we  have a fully faithful functor $\iota^G\colon \A^G\ra \widehat{\A}^G$. 
\end{exm}

\begin{exm} \label{equiv skew group}

	Suppose $\A$ is idempotent-complete and  $\{M_g, \alpha_{g,g'}\}$ is a $G$-action on $\A$ with unit $\Xi$. Let $(A,\{\lambda_g^A\})\in \A^G$. Recall that we have an equivalence \[\hom_\A(A,-)\colon \add A\overset{\simeq}{\lra} \proj \End_\A(A).\] 
	If we equip $\proj \End_\A(A)$ with the $G$-action $\{N_g=(-)^g\}$ induced by the $G$-action  on the ring $\End_\A(A)$ (see Example~\ref{exm skew}) then $\hom_\A(A,-)$ is $G$-equivariant. 
  Indeed, 
  for $B\in \add A$, define a mapping \[\phi_{g,B}\colon \hom_\A(A, M_g(B))\lra N_g(\hom_\A(A,B))\] by  \[\phi_{g,B}(h)=\Xi_{B}\circ \alpha_{g^{-1},g,B} \circ M_{g^{-1}}(h)\circ (\lambda_{g^{-1}}^A)^{-1}.\]
  One easily sees that $\phi_{g,B}$ is an isomorphism of right $\End_\A(A)$-modules and  is functorial in $B$, whereby yielding a natural isomorphism \[\phi_g\colon \hom_\A(A, M_g(-))\lra N_g\circ \hom_\A(A,- ),\] and $\{\phi_g\}$ makes $\hom_\A(A,-)$  $G$-equivariant. Then we have an equivalence \[\hom_\A(A,-)^G\colon (\add A)^G \overset{\simeq}{\lra} (\proj  \End_\A(A))^G.\] 
  If $|G|$ is invertible in $\A$ then we have an equivalence \[\add \ind(A)=(\add A)^G\simeq \proj \End_\A(A)G\]
  and thus $\End_{\A^G}(\ind(A))$ is Morita equivalent to the skew group ring $\End_\A(A)G$. 
  
  Suppose additionally that $\A$ is $k$-linear and $\End(A)=k$. By the discussion above, we have \[(\add A)^G=\add \{A\otimes \rho\mid \text{finite dimensional indecomposable $kG$-modules $\rho$}\},\] where the second $A$ is equipped with any fixed equivariant structure.    
  If $|G|$ is invertible in $k$ then we have \[(\add A)^G=\add \bigoplus_{\rho\in \irr(G)}A\otimes \rho=\add A\otimes kG,\] where $kG$ is the regular representation of $G$, and for  $\rho, \rho'\in \irr(G)$, by Lemma~\ref{tensor hom}, we have \[\hom_{\A^G}(A\otimes\rho, A\otimes \rho')=\left\{\begin{matrix} k & \text{if $\rho=\rho'$}\\ 0 & \text{otherwise.}\end{matrix}\right.\]

\end{exm}

\begin{exm}
	Let $\A$ be a Hom-finite additive $k$-category, where $k$ is a field. Recall from ~\cite{BK, ReiVan} that  a Serre functor  of $\A$ is an autoequivalence $\SS$  of $\A$ such that there is a bifunctorial isomorphism \[\hom_\A(A,B)^*\cong \hom_\A(B, \SS A),\] where $(-)^*=\hom_k(-,k)$. Now suppose $G$ acts on $\A$ with $|G|$ invertible in $k$. It's shown in \cite{chen}  that $\A^G$ also admits a Serre functor, induced by $\SS$. Here we describe a different proof. In this example, we  write a morphism space $\hom(C, D)$ as $(C, D)$ for short. As usual, we obtain a natural isomorphism \[\sigma_g\colon \SS F_g\overset{\cong}{\lra} F_g\SS\]
from the following  isomorphisms 
\[(B, \SS F_g A)  \cong (F_gA,B)^*\cong (A, F_{g^{-1}}B)^* \cong  (F_{g^{-1}}B, \SS A)\cong  (B, F_g\SS A). \]
  We make our first claim that $\{\sigma_g\}$ makes $\SS$ $G$-equivariant.
  So $\SS$ induces a $k$-linear autoequivalence $\SS^G\colon \A^G\ra\A^G$. Moreover,  we have \[\SS\omega=\omega\SS^G,\quad \sigma=\oplus_{g\in G}\sigma_g\colon\SS^G\ind\overset{\cong}{\ra} \ind\SS.\] For $X, Y\in \A^G$, let \[f_{X,Y}\colon (\ind\,\omega(X),Y)^*\lra (Y,\SS^G\ind\,\omega(X))\] be the composition of the following  isomorphisms
  \[(\ind\,\omega X, Y)^*\ra (\omega X, \omega Y)^*\ra (\omega Y, \SS \omega X)\ra (Y, \ind\SS\omega X)\ra (Y, \SS^G\ind\,\omega X).\]
	 For each $X\in \A^G$,  recall that the composition $X\overset{\eta_X}{\ra} \ind\circ \omega(X)\overset{\epsilon_X}{\ra} X$ of adjunctions  equals $|G|\cdot \id_X$. Let $e_X\colon \ind\circ \omega(X)\ra \ind\circ \omega(X)$ be the idempotent $1-\frac{1}{|G|}\eta_X\epsilon_X$. We make our second claim that there is a commutative diagram
	 \[\xymatrix@C+20pt{(\ind\,\omega(X),Y)^*\ar[r]^{(-\circ  e_X)^*}\ar[d]_{f_{X,Y}} & (\ind\,\omega(X), Y)^*\ar[d]^{f_{X,Y}}\\
	(Y, \SS^G\ind\,\omega(X))\ar[r]^{\SS^G e_X\circ -} & (Y, \SS^G \ind\,\omega(X)),}\]
bifunctorial in $X, Y\in \A^G$. 
Then there is a unique bifunctorial isomorphism \[\kappa_{X,Y}\colon (X,Y)^*\lra (Y, \SS^G X)\] making the following diagram commute 
\[\xymatrix@C+20pt{(X, Y)^*\ar[r]^<<<<<<<<{(-\circ \eta_X)^*} \ar@{-->}[d]^{\kappa_{X,Y}} & (\ind\,\omega(X),Y)^*\ar[r]^{(-\circ  e_X)^*}\ar[d]^{f_{X,Y}} & (\ind\,\omega(X), Y)^*\ar[d]^{f_{X,Y}}\\
	 (Y, \SS^G X)\ar[r]^<<<<<<<<{\SS^G\eta_X\circ -} & (Y, \SS^G\ind\,\omega(X))\ar[r]^{\SS^G e_X\circ -} & (Y, \SS^G \ind\,\omega(X))}\] 
	because the two left horizontal arrows are kernels. 
	This shows  that $\SS^G$ is a Serre functor of $\A^G$.

It remains to prove our two claims. Let us show our first claim. The trianglular commutative diagram for $\SS$ to be $G$-equivariant is easy to be established and to establish the rectangular commutative diagram, it sufficies to prove the commutativity of  the following diagram
\[\begin{footnotesize}\xymatrix@R-10pt{(\SS F_g F_h C, \SS F_g F_h C)\ar[rr]^{\sigma_{g,F_h C}\circ -} \ar[dd] && (\SS F_g F_h C, F_g \SS F_h C)\ar[d]\\
	  && (F_{g^{-1}}\SS F_gF_h C, \SS F_h C)\ar[dd]_{\sigma_{h,C}\circ -} \\
	  (\SS F_g F_h C, \SS F_{hg} C)\ar[dd]^{\sigma_{hg,C}\circ -} && \\
		&& (F_{g^{-1}} \SS F_g F_h C, F_h \SS C)\ar[d]\\
		(\SS F_g F_h C, F_{hg} \SS C)\ar[rr] && (\SS F_g F_h C, F_g F_h \SS C),}\end{footnotesize}
\]
	where the unadorned arrows represent the obvious isomorphisms. Indeed, the commutativity of the above diagram can be obtained by combining a few commutative diagrams,  among which two diagrams  take the form ~\eqref{eq: commutativity} given in Corollary~\ref{commutativity} and the others are evidently commutative. 
	To show our second claim, note that for $(A,\{\lambda_g^A\})\in \A^G$, since $\SS$ is $G$-equivariant, $\SS A$ has an induced equivariant structure $\{\lambda_g^{\SS A}:=\SS(\lambda_g^A)\circ \sigma_{g,A}^{-1}\}$. Given another object $(B, \{\lambda_g^B\})\in \A^G$, one can check that the isomorphism \[\hom_\A(A,B)^*\cong \hom_\A(B,\SS A)\] is in fact an isomorphism of $G$-representations. To do this, one needs the commutativity of the following diagram 
	\[\xymatrix{(A,B)^*\ar[r]\ar[d]^{(F_g)^*} & (B,\SS A)\ar[r]^{F_g} & (F_g B, F_g\SS A)\ar[d]^{\sigma_{g,A}^{-1}\circ -}\\
(F_g A, F_g B)^*\ar[rr] & & (F_g B, \SS F_g A),}\] which can be obtained by combining a few obviously commutative diagrams. 
	And then one readily sees that there is a  commutative diagram 
	\[\begin{small}
\xymatrix@C-15pt{(\ind\,\omega X, Y)^*\ar[r]\ar[d]^{(-\circ e_X')^*} & (\omega X, \omega Y)^*\ar[r]\ar[d]^{a^*} & (\omega Y, \SS \omega X)\ar[r]\ar[d]^{b} & (Y, \ind\SS\omega X)\ar[r]\ar[d]^{c} & (Y, \SS^G\ind\,\omega X)\ar[d]^{\SS^G e_X'\circ -}\\
			 (\ind\,\omega X, Y)^*\ar[r] & (\omega X, \omega Y)^*\ar[r] & (\omega Y, \SS \omega X)\ar[r] & (Y, \ind\SS\omega X)\ar[r] & (Y, \SS^G\ind\,\omega X),}
		 \end{small}
	 \]
	 where 
	\[\begin{aligned}
		e_X' & =1-e_X=\frac{1}{|G|}\eta_X\varepsilon_X,\\
			a & =\{u\mapsto \frac{1}{|G|}\sum_{g\in G}u.g\},\\
			b & =\{v\mapsto \frac{1}{|G|}\sum_{g\in G}v.g\},\\
			c & =\{w\mapsto \sigma_A\circ \SS^G e'_X\circ \sigma_A^{-1}\circ w\}.
	\end{aligned}
\]
We are done.
\end{exm}


\subsection{Trivial action}\label{sec: trivial}

Now let us define a trivial action (see ~\cite[\S3.3]{KP}). Let $G$ be a finite group with unit $e$ and $\A$ an additive category. 
\begin{defn}
	An action $\{F_g,\delta_{g,h}\mid g,h\in G\}$ of $G$ on $\A$ is said to be \emph{trivial} if there is a \emph{trivialization} of this action, i.e., a collection $\{\xi_g\colon F_g\overset{\sim}{\ra} \id_\A\mid g\in G\}$  of natural isomorphisms  such that for each $g,h\in G$, the following diagram 
	\[\xymatrix{F_gF_h\ar[r]^{\delta_{g,h}} \ar[d]_{F_g\xi_h} & F_{hg}\ar[d]^{\xi_{hg}}\\
	F_g \ar[r]^{\xi_g} & \id}\]
	commutes. 
\end{defn}

In this case, $\xi_e$ coincides with the unit of the action. Moreover, each $A\in \A$ has an equivariant structure $\{\xi_{g,A}\mid g\in G\}$, which  is called the \emph{trivial equivariant structure}. $G$ acts trivially on  $\hom_{\A}(A, B)$ if $A,B$ are equipped with the trivial equivariant structures.  If $\A$ is linear over a field $k$ then for $A\in \A$ we have an isomorphism \[\oplus \xi_{g,A}\colon \ind(A)\overset{\cong}{\lra}  A\otimes kG\] in $\A^G$, where the $A$ on the right hand side is equipped with the trivial equivariant structure and $kG$ is the (right) regular  representation  of $G$. 


In case of a trivial action, it is  known that $\A^G$ decomposes as in the following proposition (cf. \cite[\S 4.2]{BKR}, \cite[Proposition 3.3]{KP}).

\begin{prop}\label{trivial action}
	If $G$ acts trivially on an idempotent-complete additive $k$-category $\A$ and $|G|$ is invertible in $k$ then 
	\[\A^G=\coprod_{\rho\in \irr(G)}\A\otimes \rho\simeq \coprod_{\rho\in \irr(G)}\A,\] where for each $A\otimes \rho\in \A\otimes \rho$, $A$ is equipped with the trivial equivariant structure.
\end{prop}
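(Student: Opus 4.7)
The plan is to define, for each $\rho\in\irr(G)$, a functor $\Psi_\rho\colon \A\to \A^G$ sending $A$ to $A\otimes \rho$ equipped with the equivariant structure $\{\xi_{g,A}\otimes\rho_g\}$ induced from the trivial structure on $A$ and the $G$-action on $\rho$, and to show that the assembled functor $\Psi\colon\coprod_{\rho\in\irr(G)}\A\to\A^G$ is an equivalence. Since each $\Psi_\rho$ is tautologically essentially surjective onto its image $\A\otimes\rho\subset \A^G$, establishing both of the claimed identifications reduces to proving that $\Psi$ is fully faithful and essentially surjective.

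For fully faithfulness, together with the orthogonality of the subcategories $\A\otimes \rho$, the key step is to apply Lemma~\ref{tensor hom} and compute, for $A,B\in \A$ and $\rho,\rho'\in\irr(G)$,
\[\hom_{\A^G}(A\otimes\rho, B\otimes \rho')=\hom_\A(A\otimes\rho, B\otimes\rho')^G\cong(\hom_\A(A,B)\otimes\hom_k(\rho,\rho'))^G.\]
Since $A$ and $B$ carry trivial equivariant structures, the induced $G$-action on $\hom_\A(A,B)$ is trivial, so this collapses to $\hom_\A(A,B)\otimes\hom_{kG}(\rho,\rho')$. By Schur's lemma, this is $\hom_\A(A,B)$ when $\rho\cong\rho'$ and vanishes otherwise, yielding simultaneously the Hom formula on $\A\otimes\rho$ and the orthogonality between distinct $\A\otimes\rho$.

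For essential surjectivity, I would take $X\in\A^G$ and use Lemma~\ref{adjoint}(3) to realize $X$ as a direct summand of $\ind\,\omega(X)$ in $\A^G$. The trivialization supplies an isomorphism $\ind\,\omega(X)\cong\omega(X)\otimes kG$ in $\A^G$ (as recorded immediately before the proposition). Maschke's theorem, valid since $|G|$ is invertible in $k$, decomposes $kG\cong\bigoplus_{\rho\in\irr(G)}\rho^{n_\rho}$, so $\ind\,\omega(X)$ already lies in the essential image of $\Psi$. To conclude, I would observe that this essential image is closed under direct summands in $\A^G$: by the orthogonality of Hom spaces from the previous paragraph, any idempotent on $\bigoplus_\rho A_\rho\otimes \rho$ is block-diagonal, hence a direct sum of idempotents in $\End_{\A^G}(A_\rho\otimes\rho)\cong\End_\A(A_\rho)$, and these split in $\A$ by idempotent-completeness of $\A$ (which also makes $\A^G$ idempotent-complete by Lemma~\ref{adjoint}(4)).

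The main technical ingredient is the Hom computation, which reduces to Schur's lemma after invoking Lemma~\ref{tensor hom}; the remaining work is the block-diagonal decomposition of idempotents, which is routine once idempotent-completeness is in place. No new obstacle arises beyond extracting the right consequences of Maschke's theorem and Lemma~\ref{adjoint}.
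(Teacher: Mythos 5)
Your proposal is correct and follows essentially the same route as the paper's proof: the Hom computation via Lemma~\ref{tensor hom} giving orthogonality and the identification of each block $\A\otimes\rho$ with $\A$, the realization of $X$ as a direct summand of $\ind\circ\omega(X)\cong\omega(X)\otimes kG$, and the splitting of that summand across the blocks. Your block-diagonal idempotent argument is exactly the content of the paper's Lemma~\ref{simple lem}, so the two proofs coincide in substance.
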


\begin{proof}
	As in \cite[Proposition 3.3]{KP}, one uses Lemma~\ref{tensor hom} to conclude that \[\hom_{\A^G}(\A\otimes\rho, \A\otimes \rho')=0\] for $\rho\neq \rho'\in \irr(G)$ and that the functor  \[ \A\lra \A\otimes \rho,\quad A\mapsto A\otimes \rho \] sending a morphism $f$ in $\A$ to $f\otimes \id_\rho$ is an equivalence for each $\rho\in \irr(G)$. To show $\A^G=\coprod_{\rho\in \irr(G)}\A\otimes \rho$, it remains to show that each $X\in \A^G$ decomposes as $X=\oplus_{\rho\in \irr(G)} X_\rho$, where $X_\rho\in \A\otimes \rho$. We need the following simple  
	\begin{lem}\label{simple lem}
		Suppose $\B$ is an idempotent-complete additive category. Let $Y_1, Y_2\in \B$ with $\hom(Y_1,Y_2)=0=\hom(Y_2, Y_1)$. If $Y\in\B$ is a direct summand of $Y_1\oplus Y_2$ then $Y$ decomposes as $Y= Y_3\oplus Y_4$ such that $Y_3$ resp. $Y_4$ is a direct summand of $Y_1$ resp. $Y_2$.  
	\end{lem}
	Since $X$ is a direct summand of \[\ind \circ \omega(X)\cong  \omega(X)\otimes kG\cong \oplus_{\rho\in \irr(G)}\omega(X)^{\oplus n_\rho}\otimes \rho,\]
	where $n_\rho=\dim_k\rho^\vee$, we know that $X=\oplus_{\rho\in \irr(G)} X_\rho$ by the above lemma, where $X_\rho$ is a direct summand of $\omega(X)^{\oplus n_\rho}\otimes \rho$. Since $\A\otimes \rho\simeq \A$ ($\rho\in \irr(G)$) is idempotent-complete, $\A\otimes \rho$ as a  full subcategory of $\A^G$ is closed under direct summands. So $X_\rho\in \A\otimes \rho$.  
	We are done.
\end{proof}

\begin{rmk}
	We remark that we cannot drop the assumption that $\A$ is idempotent-complete (which is missing in \cite[Proposition 3.3]{KP}). For example, let the bounded derived category $\D^b(\mod k)$ be endowed with the trivial $G$-action induced by the trivial action of $G$ on $k$. Let $\A$ be the smallest triangulated subcategory of $\D^b(\mod k)$ containing $k^{\oplus 2}$. Take two characters $\chi_1\neq \chi_2$ of $G$ over $k$ (suppose the character group $G^*$ is not trivial).  Then the object $k\otimes \chi_1\oplus k\otimes \chi_2\in \A^G$ does not lie in the subcategory $\coprod_{\rho\in \irr(G)} \A\otimes \rho$. 
\end{rmk}

Given a group homomorphism $\phi\colon G_1\ra G_2$ between two finite groups and a $G_2$-action $\{M_a,\alpha_{a,a'}\mid a,a'\in G_2\}$ on $\A$, restriction along $\phi$ yields a $G_1$-action \[\{N_b:=M_{\phi(b)}, \beta_{b,b'}:=\alpha_{\phi(b),\phi(b')}\mid b,b'\in G_1\}\] on $\A$. In particular, consider  a normal subgroup $H$ of a finite group $G$. If $\A$ admits a $G/H$-action then restriction along the quotient map $G\epic G/H$ yields a $G$-action  
 on $\A$, which in turn restricts to a trivial $H$-action  on $\A$.

Now suppose $\A$ is an idempotent complete additive $k$-category with a $G/H$-action, where $k$ is a field, and equip $\A$ with the induced $G$-action. 
We want to relate $\A^{G/H}$ to $\A^G$. Since the action of $H$ on $\A$ is trivial, there is a decomposition \[\A^H=\coprod_{\rho\in \irr(H)} \A\otimes \rho.\] Since the restriction functor  $\res^H_G\colon \A^G\ra \A^H$ is faithful, there is a decomposition \[\A^G=\coprod_{\rho\in \irr(H)} (\A^G)_\rho,\] where  
 $(\A^G)_\rho=(\res^H_G)^{-1}(\A\otimes \rho)$ is  the full subcategory of $\A^G$ consisting of those objects whose images under  $\res^H_G$ lie in $\A\otimes \rho$.  
\begin{prop}\label{trivial2}
	Let $\A$ be an idempotent-complete additive $k$-category with a $G/H$-action.  
	Then  we have an equivalence \[F\colon \A^{G/H}\overset{\simeq}{\lra} (\A^G)_{\rho_0}\]  
	such that \[\pr\circ \ind^G\cong F\circ \ind^{G/H},\] where $\rho_0$ is the trivial $H$-representation  over $k$ and $\pr$ is the projection of $\A^G$ to its component $(\A^G)_{\rho_0}$.

\end{prop}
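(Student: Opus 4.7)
The plan is to define the functor $F$ on objects by sending $(A,\{\mu_{aH}\}_{aH\in G/H})$ to $(A,\{\lambda_g:=\mu_{gH}\}_{g\in G})$ and on morphisms by $F(f)=f$. The $G$-equivariance of $\{\lambda_g\}$ follows at once from the $G/H$-equivariance of $\{\mu_{aH}\}$ via the identifications $F_g=M_{gH}$ and $\delta_{g_1,g_2}=\alpha_{g_1H,g_2H}$ coming from restriction along $G\epic G/H$. Because $\mu_{eH}=\Xi_A$, we have $\lambda_h=\Xi_A$ for every $h\in H$, which coincides with the trivial $H$-equivariant structure (under the trivialization $\xi_h=\Xi$ of the restricted action), so $F(A,\mu)$ genuinely lies in $(\A^G)_{\rho_0}$.

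Full faithfulness is immediate: for a morphism $f\colon A\to B$, the $G$-equivariance condition at $g$ and at any $g'=gh$ with $h\in H$ coincide (since both $F_g$ and $\lambda_g$ depend only on $gH$), so the family of $G$-equivariance conditions on $f$ reduces to the family of $G/H$-equivariance conditions. For essential surjectivity, take $(A,\{\lambda_g\})\in (\A^G)_{\rho_0}$; the discussion in \S\ref{sec: trivial} forces $\lambda_h=\Xi_A$ for all $h\in H$. Combining the equivariance relation $\lambda_{hg}\circ \delta_{g,h,A}=\lambda_g\circ F_g(\lambda_h)$ with the identity $\delta_{g,h,A}=\alpha_{gH,e,A}=F_g(\Xi_A)$ (the $G/H$-analogue of the general formula $F_g\Xi=\delta_{g,e}$) yields $\lambda_{hg}=\lambda_g$. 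Since $H$ is normal, $\lambda_g$ depends only on the coset $gH$, and $\mu_{gH}:=\lambda_g$ defines a $G/H$-equivariant structure with $F(A,\mu)=(A,\{\lambda_g\})$.

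For the natural isomorphism $\pr\circ \ind^G\cong F\circ \ind^{G/H}$, the cleanest approach is to use adjunctions. By construction, $F$ intertwines the forgetful functors: $\omega^{G/H}=\omega\circ \iota\circ F$, where $\iota\colon (\A^G)_{\rho_0}\hookrightarrow \A^G$ is the inclusion. In the decomposition $\A^G=\coprod_\rho (\A^G)_\rho$, the projection $\pr$ is two-sided adjoint to $\iota$, so together with $\ind^{G/H}\dashv \omega^{G/H}$ and $\ind^G\dashv \omega$, taking left adjoints of the identity above yields $\ind^{G/H}\cong F^{-1}\circ \pr\circ \ind^G$, which is the desired statement.

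The main obstacle I anticipate is confirming that $\A^G=\coprod_\rho (\A^G)_\rho$ is a genuine categorical coproduct, so that $\pr$ is genuinely two-sided adjoint to $\iota$. This reduces to showing that the canonical idempotents in $\End_{\A^H}(\res^H_G X)$ projecting onto the isotypic components are $G$-equivariant and hence lift to $\End_{\A^G}(X)$, after which idempotent-completeness of $\A^G$ (Lemma~\ref{adjoint}(4)) furnishes the splitting of $X$ in $\A^G$. As a safety net, one may instead compute directly: $\res^H_G\ind^G(A)\cong \ind^{G/H}(A)\otimes kH$ in $\A^H$ with $kH$ carrying the right regular representation and $\ind^{G/H}(A)$ the trivial $H$-structure, so its $\rho_0$-component is $\ind^{G/H}(A)$, matching $\omega\circ \iota\circ F(\ind^{G/H}(A))$; naturality upgrades this to an isomorphism in $(\A^G)_{\rho_0}$.
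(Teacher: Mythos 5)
Your proposal is correct and follows essentially the same route as the paper: the functor $F$ is defined identically (reindexing the equivariant structure along $G\epic G/H$), and the isomorphism $\pr\circ\ind^G\cong F\circ\ind^{G/H}$ is obtained by the same uniqueness-of-left-adjoints argument from the identity $\omega^{G/H}=\omega\circ\iota\circ F$. You merely spell out the verification that $F$ is an equivalence and the legitimacy of the coproduct decomposition $\A^G=\coprod_\rho(\A^G)_\rho$, which the paper leaves to the reader.
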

\begin{proof}

Let $G/H=\{Hg_i \mid 1\leq i\leq n\}$, where we have fixed a complete set of representatives $\{g_1=e,\dots,g_n\}$ and
let $\{F_{Hg_i}, \mu_{Hg_i,Hg_j}\}$ be the $G/H$-action  on $\A$.
Define a functor \[F\colon \A^{G/H}\lra (\A^G)_{\rho_0}\] as follows: $F$ sends  an object $(A, \{\mu_{Hg_i}^A\})$ in $\A^{G/H}$ to $(A, \{\nu_g^{A}\})$, where $\nu_g^{A}=\mu_{Hg_i}^A$ (suppose $g\in Hg_i$); $F$ sends a morphism $f$ in $\A^{G/H}$ to $f$. 
One readily sees that $F$ is well-defined. In particular, we have  $(A, \{\nu_g^{A}\})\in (\A^G)_{\rho_0}$.  It's also easy to see that $F$ is an equivalence. 
Let $\iota\colon (\A^G)_{\rho_0}\monic \A^G$ be the inclusion functor. Note that we have two adjoint pairs  \[(\pr\circ \ind^G, \omega\circ \iota)\quad\text{and}\quad (F\circ\ind^{G/H}, \omega\circ F^{-1}),\]
where $F^{-1}$ is a quasi-inverse of $F$. 
Since $\omega=\omega\circ \iota\circ F$, we have $\pr\circ \ind^G\cong F\circ \ind^{G/H}$.

\end{proof}

\subsection{Reconstruction of $\A$ from $\A^G$ via equivariantization}\label{sec: reconstruction}
Let $k$ be a field.   
Suppose $G$ is a finite group with $|G|$ invertible in $k$ and $\A$ is an idempotent-complete  additive $k$-category with an action of $G$. 
It's well-known that  there is  a de-equivariantization process $(-)_G$ such that $(\A^G)_G\simeq \A$  (see \cite{DGNO}).  We show in this subsection that if moreover $G$ is solvable  and $k$ is algebraically closed then it is possible to obtain $\A$ from $\A^G$ via equivariantization. This is a common generalization  of  Reiten and Riedtman's result  \cite[Propsition 5.3]{RR}  and Elagin's result \cite[Theorem 1.3]{E}.  Our proof  is almost along the same lines with   Elagin's proof of \cite[Theorem 1.3]{E}.

 Denote by $G'$ the commutator group of $G$. 
 Observe that we have a strict action $\{-\otimes \chi\mid \chi\in G^*\}$ of the character group  $G^*$ on $\A^G$. 
 \begin{thm}\label{A^G A}
	 Suppose that $\A$ is an idempotent-complete $k$-linear additive category, where $k$ is an algebraically closed field, and that $|G|$ is invertible in $k$. Then there is an equivalence \[(\A^G)^{G^*}\simeq \A^{G'}.\] In particular, if  $G$ is solvable then we can obtain $\A$ from $\A^G$ after a finite sequence of equivariantization; if  $G$ is abelian then we have an equivalence $(\A^G)^{G^*}\simeq \A$.
\end{thm}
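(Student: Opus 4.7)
The plan is to reduce the statement to the abelian case (Elagin's cited result) using the commutator subgroup $G' \triangleleft G$ as a bridge. The key character-theoretic input is that any group homomorphism $G \ra k^\times$ factors uniquely through the abelianization $G/G'$, giving a canonical identification $G^* \cong (G/G')^*$; moreover $|G^*| = |G/G'|$ because $k$ is algebraically closed and $|G|$ is invertible in $k$.

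First I would show that the $G$-action on $\A$ induces a natural $G/G'$-action on $\A^{G'}$. The point is that for any $h_0 \in G'$ and any $(A, \{\lambda_h^A\}_{h \in G'}) \in \A^{G'}$, the isomorphism $\lambda_{h_0}^A \colon F_{h_0}(A) \ra A$ is a morphism in $\A^{G'}$ (by the cocycle condition satisfied by the equivariant structure), so $F_{h_0}$ is canonically isomorphic to the identity on $\A^{G'}$, and the restricted $G$-action descends to a $G/G'$-action. Given this, I would construct an equivalence $\Psi \colon \A^G \ra (\A^{G'})^{G/G'}$ by repackaging: $\Psi$ sends a $G$-equivariant object $(A, \{\lambda_g\}_{g \in G})$ to the $G'$-equivariant object $(A, \{\lambda_h\}_{h \in G'})$ endowed with the $G/G'$-equivariant structure induced by the remaining $\lambda_g$. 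The $G$-cocycle condition on $\{\lambda_g\}$ decomposes exactly into the $G'$-cocycle condition for $\{\lambda_h\}_{h \in G'}$ together with the $G/G'$-cocycle condition for the induced structure, so the repackaging is reversible and $\Psi$ is an equivalence.

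Next I would apply the abelian case (Elagin's result) to the $G/G'$-action on $\A^{G'}$. The category $\A^{G'}$ is $k$-linear, idempotent-complete by Lemma~\ref{adjoint}(4), and $|G/G'|$ is invertible in $k$, so the abelian case gives $((\A^{G'})^{G/G'})^{(G/G')^*} \simeq \A^{G'}$. To transport this across $\Psi$, I would verify that the strict $G^*$-action $-\otimes \chi$ on $\A^G$ corresponds, under $\Psi$, to the strict $(G/G')^*$-action on the outer equivariantization: the character $\chi$ sends $\lambda_g$ to $\chi(g)\lambda_g$, and since $\chi|_{G'} = 1$ this leaves the $G'$-equivariant part fixed and twists only the $G/G'$-equivariant part by $\chi$ viewed as a character of $G/G'$. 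Combining the two equivalences yields $(\A^G)^{G^*} \simeq ((\A^{G'})^{G/G'})^{(G/G')^*} \simeq \A^{G'}$.

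The abelian case of the theorem is then immediate, since $G' = 1$ and $\A^{G'} = \A$. For solvable $G$, the derived series $G \supset G' \supset G'' \supset \cdots \supset G^{(n)} = 1$ terminates; iterating the main equivalence (each time applied to the action of $G^{(i)}$ on $\A$) produces $\A^{G^{(i+1)}}$ from $\A^{G^{(i)}}$ by a single equivariantization, so $n$ equivariantizations starting from $\A^G$ recover $\A = \A^{G^{(n)}}$. The main obstacle I expect is the rigorous construction of $\Psi$ and the identification of the two character-group actions; both reduce to formal cocycle bookkeeping, but they require care because the paper uses right actions with a non-trivial cocycle $\delta_{g,h}$, and the $G/G'$-action on $\A^{G'}$ is only defined up to natural isomorphism (depending on a chosen set of coset representatives), so one must check that $\Psi$ is well-defined independently of these choices.
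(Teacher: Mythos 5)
Your route is genuinely different from the paper's. The paper never decomposes the equivariantization in stages; instead it works entirely with comonads on $\A^G$: it forms the comonad $T_1=\omega\circ\ind$ coming from the adjunction $(\A^G)^{G^*}\rightleftarrows\A^G$ and the comonad $T_2=\ind^G_{G'}\circ\res^{G'}_G$ coming from $\A^{G'}\rightleftarrows\A^G$, invokes separability of $\omega$ and $\ind^G_{G'}$ together with idempotent-completeness (Corollary~\ref{comonadicity2}) to identify each of $(\A^G)^{G^*}$ and $\A^{G'}$ with the comodule category of its comonad, and then proves $T_1\cong T_2$ by routing both through the comonad $-\otimes kG/G'$, using that $\bigoplus_{\chi\in G^*}\chi\cong kG/G'$ as $G$-representations (here is where algebraic closure and invertibility of $|G/G'|$ enter, via linear independence of characters). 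Your argument instead concentrates the analytic content in Elagin's abelian case, cited as a black box, and reduces to it via the stage-wise equivalence $\A^G\simeq(\A^{G'})^{G/G'}$. That reduction is sound in principle and the character-action matching you describe is correct, but be aware that the step you gloss as ``the restricted $G$-action descends to a $G/G'$-action'' is the real content of your proof and is nowhere in the paper: a trivialization of the restriction to a normal subgroup $N$ does not by itself yield a $G/N$-action; one must define the coherence isomorphisms for the coset functors $F_{g_i}$ using the tautological trivialization $\lambda_{n}$, verify the $2$-cocycle condition from the cocycle conditions on $\delta_{g,h}$ and on the equivariant structures, and check independence of coset representatives --- an amount of bookkeeping comparable to the paper's comonad comparison. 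What your approach buys is modularity (the stages lemma is of independent interest and needs neither algebraic closure nor invertibility of $|G|$) at the cost of not reproving the abelian case, which the paper's self-contained argument does deliver; what the paper's approach buys is that all coherence verifications are replaced by a single comparison of three explicitly given comonads.
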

The special case when $G$ is abelian  is Elagin's result.  When $\A=\Mod R$ for a finite dimensional algebra $R$,  our proposition specializes to Reiten and Riedtman's results \cite[Proposition 5.3]{RR} and \cite[Corollary 5.2]{RR}.

\begin{proof}

\def\trivial{\textup{trivial}}
	The adjoint pair \[ \xymatrix{\omega\colon (\A^G)^{G^*} \ar@<0.5ex>[r] & \ar@<0.5ex>[l] \A^G :\ind}\] gives us a  comonad $T_1=(F_1=\omega\circ \ind, \epsilon_1, \Delta_1)$ on $\A^G$;
	the adjoint pair \[\xymatrix{\ind^G_{G'}\colon \A^{G'} \ar@<0.5ex>[r] & \ar@<0.5ex>[l] \A^{G}: \res_G^{G'}}\] gives us another  comonad $T_2=(F_2=\ind^G_{G'}\circ \res_{G}^{G'}, \epsilon_2, \Delta_2)$ on $\A^G$. Since $\omega$ and $\ind^G_{G'}$ are separable functors and since $(\A^G)^{G^*}$ and $\A^{G'}$  are idempotent-complete by Lemma~\ref{adjoint}(4),  we know from  Corollary~\ref{comonadicity2} that the comparison functors \[(\A^G)^{G^*}\lra (\A^G)_{T_1},\quad \A^{G'}\lra (\A^G)_{T_2}\]  are equivalences. To show $(\A^G)^{G^*}\simeq \A^{G'}$, it sufficies to show that the two comonads $T_1, T_2$ on $\A^G$ are isomorphic. 

We have \[F_1=-\otimes (\oplus_{\chi\in G^*}\chi) \colon \A^G\lra \A^G.\] Moreover, $\epsilon_1\colon F_1\ra \id$ is induced by the morphism of $G$-representations \[\oplus_{\chi\in G^*}\chi\lra k_{\trivial},\quad 1_\chi\mapsto \left\{\begin{array}{ll} 1 & \,\,\text{if $\chi= k_{\trivial}$}\\
	0 & \,\,\text{otherwise}\end{array}\right.\] where $k_{\trivial}$ is the trivial representation, and that $\Delta_1\colon F_1\ra F_1\circ F_1$ is induced by the morphism of $G$-representations 
\[\oplus_{\chi\in G^*}\chi\lra (\oplus_{\chi\in G^*}\chi)\otimes (\oplus_{\chi\in G^*}\chi),\quad 1_\chi\mapsto \sum_{\chi'\in G^*} 1_{\chi\chi'^{-1}}\otimes 1_{\chi'}.\]

Suppose $G/G'=\{G'g_i\mid 1\leq i\leq n\}$ ($g_1=e$) and let  $kG/G'$ be the $k$-space with basis $\{G'g_i\mid 1\leq i\leq n\}$ equipped with the natural  $G$-action. 
We have still another comonad $T_3=(F_3=-\otimes kG/G', \eta_3, \Delta_3)$  on $\A^G$, where $\epsilon_3\colon F_3\ra \id$ is defined by the morphism of $G$-representations \[kG/G'\lra k_{\trivial},\quad G'g_i\mapsto  1\] and  $\Delta_3\colon F_3\ra F_3\circ F_3$ is defined by the morphism of $G$-representations \[kG/G'\lra kG/G'\otimes kG/G',\quad G'g_i\mapsto G'g_i\otimes G'g_i.\]

We first show that $T_1\cong T_3$. Since the abelian group $G/G'$ by assumption  has order invertible in the algebraically closed field $k$, we have an isomorphism \[\alpha\colon \oplus_{\chi\in G^*}\chi \lra kG/G'\] as $G$-representations (and as $G/G'$-representations). 
By the linear independence of  characters of $G/G'$, we can suppose $\alpha(\sum_{\chi\in G^*} 1_\chi)=G'e$. 
$\alpha$ induces  a natural  transform $\varepsilon\colon F_1\ra F_3$. To see that $\varepsilon$ is compatible with the counits and comultiplications, it sufficies to note that we have two commutative diagrams of $G$-representations
	 \[\xymatrix@C-20pt{\oplus_{\chi\in G^*}\chi \ar[rr] \ar[d]_\alpha && k_{\trivial}\ar@{=}[d] & & \oplus_{\chi\in G^*}\chi \ar[rr] \ar[d]_\alpha && (\oplus_{\chi\in G^*}\chi)\otimes(\oplus_{\chi\in G^*}\chi)  \ar[d]^{\alpha\otimes \alpha}\\
	 kG/G' \ar[rr] && k_{\trivial}, & & kG/G' \ar[rr] && kG/G'\otimes kG/G'.}\]
This allows us to conclude $T_1\cong T_3$. 

Now we show $T_3\cong T_2$. Define a natural transform $\theta\colon F_2\ra F_3$ by $\theta_{(A,\{\lambda_g^A\})}=\oplus_{i=1}^n\lambda_{g_i}^A$
for  $(A,\{\lambda_g^A\})\in \A^G$. $\theta$ is well-defined and is obviously  an isomorphism. It's direct to check that $\theta$ is compatible with the counits and the comultiplications and thus yields an isomorphism $T_3\cong T_2$ of comonads. 

In conclusion, we have $T_1\cong T_3\cong T_2$. We are done.

\end{proof}

\begin{rmk}
	We remark here that  \cite[Proposition 5.1, Corollary 5.2]{RR} hold in greater generality. In fact, if $k$ is an algebraically closed field, $A$ a $k$-algebra and $G$ a finite group with $|G|$ invertible in $k$ then there is an algebra isomorphism 
	\[
		\begin{aligned}
			\phi\colon (AG)G^*\quad & \lra & &\End_{AG'}(AG),\\ 
	 ag\chi  \quad & \mapsto & &\{bh\mapsto \chi(h)agbh,\,\, \text{for}\,b\in A,h\in G\},\,\,\text{for}\, a\in A, g\in G, \chi\in G^*,
 \end{aligned}
 \] 
where $AG$ is considered as a right $AG'$-module; in particular, the skew group ring $(AG)G^*$ is Morita equivalent to the skew group ring  $AG'$. All  arguments in \cite{RR} work in the current setting except the arguments showing the surjectiveness of $\phi$. Actually, one can use the linear independence of characters of $G$ to conclude that $\phi$ is surjective.

\end{rmk}

\section{Equivariantization of triangulated categories}

Throughout this section, $G$ denotes a finite group. 
\subsection{(Pre-)triangulated structure on categories of equivariant objects}\label{sec: tri}

Let $\D$ be a triangulated (resp. pretriangulated) category with a $G$-action. 
With an expectation that the category $\D^G$ of equivariant objects is also triangulated (resp. pretriangulated), it's natural to consider an action that is compatible with the triangulated (resp. pretriangulated) structure. 

\begin{defn}\label{admissible tri}
	A  $G$-action $\{F_g, \delta_{g,h}\}$  on $\D$ is said to be \emph{admissible} if   $F_g$'s are  exact autoequivalences with commutating isomorphisms $\theta_g\colon F_g\circ [1]\ra [1]\circ F_g$ such that $\{\theta_g^{-1}\}$ makes the translation functor $[1]$ of $\D$ $G$-equivariant.
\end{defn}

In this case, the $G$-equivariant translation functor $[1]$ of $\D$ induces a translation functor (i.e. an autoequivalence) $\D^G\ra \D^G$, which is also denoted by $[1]$. We have an equality $\omega\circ [1]=[1]\circ \omega$ and a natural isomorphism \[\theta=\oplus_{g\in G}\theta_g\colon \ind\circ [1]\overset{\sim}{\ra} [1]\circ \ind.\]

\begin{defn} Given an admissible $G$-action on a triangulated (resp. pretriangulated) category $\D$, a triangulated (resp. pretriangulated) structure on $\D^G$ such that the forgetful functor $\omega\colon \D^G\ra \D$ is exact is called a \emph{canonical triangulated} (resp. \emph{pretriangulated}) \emph{structure}.
\end{defn}

By virtue of  Balmer's theorem \cite[Theorem 4.1]{Balmer}, Chen observed in \cite{XWC} that under certain assumptions there exists a unique canonical pretriangulated structure on $\D^G$. Here we slightly  generalize his result \cite[Lemma 4.4(3)]{XWC}. 

\begin{prop}\label{unique}\label{pretri}
Suppose $G$ acts admissibly  on $\D$ with $|G|$  invertible in $\D$. Then there exists a unique canonical pretriangulated structure on $\D^G$ and exact triangles in $\D^G$ are precisely those triangles whose images under $\omega$ are exact in $\D$. 
\end{prop}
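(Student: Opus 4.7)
The plan is to apply Balmer's theorem \cite[Theorem 4.1]{Balmer} on separable monads on triangulated (pretriangulated) categories, by exhibiting $\D^G$ as (fully faithfully embedded in) the Eilenberg--Moore category of a separable exact monad on $\D$.

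First I construct the monad. From the adjoint pair $\ind\dashv\omega$ of Lemma~\ref{adjoint}\thmitem{1} I obtain a monad $M=(\omega\circ\ind,\eta,\mu)$ on $\D$ whose underlying endofunctor $X\mapsto\bigoplus_{g\in G}F_g(X)$ is exact by admissibility of the action, with unit and multiplication commuting with the shift. Since $|G|$ is invertible in $\D$, Lemma~\ref{adjoint}\thmitem{2} shows that $\omega$ is separable, and the dual of Proposition~\ref{separable comonadicity} then yields that $M$ is separable and that the comparison functor $K\colon\D^G\to\D^M$ is fully faithful (in fact an equivalence up to retracts). Applying Balmer's theorem endows $\D^M$ with a canonical pretriangulated structure whose exact triangles are exactly those whose images in $\D$ are exact.

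To transport this to $\D^G$, I declare a candidate triangle $\Delta$ in $\D^G$ to be exact iff $\omega\Delta$ is exact in $\D$; equivalently iff $K(\Delta)$ is exact in $\D^M$. Because $K$ is fully faithful and translation-preserving, all pretriangulated axioms for $\D^G$ except the existence of cones (TR1) are inherited from $\D^M$. The main obstacle is therefore to verify (TR1), i.e.\ that every morphism $f\colon X\to Y$ in $\D^G$ admits a cone in $\D^G$: equivalently, taking the cone $C$ of $\omega f$ in $\D$, one must equip $C$ with a $G$-equivariant structure compatible with the connecting morphism. I expect to handle this by an averaging argument: applying $F_g$ to the cone triangle and using the equivariant structures on $X,Y$ together with (TR3) in $\D$ yields lifts $F_g C\to C$ whose ambiguity factors through the boundary, and one averages these candidates over $G$ (legitimate since $|G|$ is invertible) to obtain canonical isomorphisms $\lambda_g^C\colon F_gC\to C$ satisfying the cocycle condition. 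This recovers on $C$ exactly the $M$-module structure produced by Balmer's theorem, transported across $K$.

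For uniqueness, let $\mathcal T$ denote the class of candidate triangles with exact $\omega$-image just shown to form a canonical pretriangulated structure on $\D^G$. Given any other canonical pretriangulated structure $\mathcal S$, exactness of $\omega$ in $\mathcal S$ forces $\mathcal S\subseteq\mathcal T$. Conversely, for $\Delta\in\mathcal T$, complete its first arrow to an $\mathcal S$-exact triangle $\Delta'$; then $\omega\Delta'$ is also exact, so $\Delta'\in\mathcal T$. Applying (TR3) within $\mathcal T$ to $\Delta$ and $\Delta'$ (both $\mathcal T$-exact and sharing their first arrow) produces a morphism of candidate triangles whose third component is an isomorphism in $\D^G$ by two-out-of-three for maps of exact triangles in $\D$ and conservativity of $\omega$. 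Hence $\Delta\cong\Delta'$ as candidate triangles, whence $\Delta$ is $\mathcal S$-exact, giving $\mathcal T\subseteq\mathcal S$. This establishes both the stated characterization of exact triangles and the uniqueness of the canonical pretriangulated structure simultaneously.
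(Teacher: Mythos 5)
Your overall strategy---realizing $\D^G$ via the separable monad $\omega\circ\ind=\oplus_{g\in G}F_g$ on $\D$ and arguing as in Balmer---is the same philosophy as the paper's proof, which says explicitly that it ``essentially follows the proof of \cite[Theorem 4.1]{Balmer}''. But there are genuine gaps. First, you cannot cite Balmer's theorem as a black box here: as the paper's footnote points out, Balmer's setup requires the monad to commute with the translation in a strict sense, whereas for a non-strict admissible action $\oplus_g F_g$ commutes with $[1]$ only up to the isomorphisms $\theta_g$; one must re-run (or adapt) Balmer's argument, which is exactly what the paper does. Moreover, the Balmer-style construction of cones splits idempotents, so it needs $\D$ (hence $\D^G$ and $\D^M$) idempotent-complete; you never reduce the general case to the idempotent-complete one (the paper's Step 2 passes to $\widehat{\D}$ and observes that the essential image of $\D^G\ra\widehat{\D}^G$ is closed under shifts and cones). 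Relatedly, $K$ is only an equivalence up to retracts, so ``transporting'' the structure across $K$ already presupposes that its essential image is closed under cones---which is (TR1) all over again, not something inherited for free.

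Second, and more seriously, your actual argument for (TR1) is the naive averaging of candidate structure maps $F_gC\ra C$ on the cone, and this does not work as stated. Averaging is legitimate for producing a $G$-invariant morphism between two objects that already carry equivariant structures (this is how both you and the paper handle (TR3), since $\hom_\D(\omega Z,\omega Z')$ is then a $G$-representation). But the $\lambda_g^C$ you want are the equivariant structure itself: the fill-ins $F_gC\ra C$ are non-unique, their average need not be an isomorphism, and above all the $2$-cocycle condition $\lambda_{hg}^C=\lambda_g^C\circ F_g(\lambda_h^C)\circ\delta_{g,h,C}^{-1}$ is a multiplicative constraint that is not preserved under taking linear averages of the individual $\lambda_g^C$. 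This is precisely the known obstruction that forces the literature (Balmer, Elagin, Chen, and this paper) to proceed differently: one induces the whole cone triangle up to $\D^G$ via $\ind$ (where the equivariant structure is canonical), uses the idempotents $1-\frac{1}{|G|}\eta_{X_i}\epsilon_{X_i}$ together with Lemma~\ref{BS}(1) to produce a compatible idempotent on $\ind(A)$, splits the induced triangle as a direct sum $\Delta\cong\Delta_1\oplus\Delta_2$, and identifies $\Delta_1$ as the desired cone triangle using Lemma~\ref{BS}(2). Your uniqueness argument, by contrast, is correct and is essentially the paper's Step 3.
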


To prove this, the following fact is basic for us.
\begin{lem}[{\cite{BS}}]\label{BS}
	
\begin{enum2}
\item Let $\T$ be a category and let us be given a commutative diagram in $\T$ \[\xymatrix{A\ar[r]^a \ar[d]^{e_1} & B\ar[r]^b \ar[d]^e & C\ar[d]^{e_3}\\
A \ar[r]^a & B\ar[r]^b & C.}\]
If $e_1, e_3$ are idempotents and $b$ is a pseudo-cokernel of $a$ then there is an idempotent $e_2\colon B\ra B$ making the diagram commute with $e_2$ in lieu of $e$.

\item Let $\T$ be a pretriangulated category and let \[\Delta_1=(X,Y,Z,u,v,w), \quad\Delta_2=(X', Y', Z', u', v', w')\] be two triangles in $\T$. Then the direct sum \[\Delta_1\oplus\Delta_2=(X\oplus X', Y\oplus Y', Z\oplus Z', u\oplus u', v\oplus v', w\oplus w')\] is an exact triangle in $\T$ iff so are $\Delta_1$ and $\Delta_2$.

\item  
	Let $\T$ be a triangulated (resp. pretriangulated)  category. Then its idempotent-completion $\widehat{\T}$ admits a unique triangulated (resp. pretriangulated) structure such that the canonical functor $\iota\colon \T\ra \widehat{\T}$ is exact. More precisely, an exact triangle in $\widehat{\T}$ is a triangle $(T_1,T_2,T_3,u,v,w)$ for which there is a triangle  $(T_1',T_2',T_3',u',v',w')$ such that the triangle \[(T_1\oplus T_1', T_2\oplus T_2', T_3\oplus T_3', u\oplus u',v\oplus v',w\oplus w')\] in $\widehat{\T}$ is isomorphic to a triangle $((U,1),(V,1),(W,1),r,s,t)$  for some exact triangle $(U,V,W,r,s,t)$ in $\T$.

\end{enum2}
\end{lem}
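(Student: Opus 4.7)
For part (1), the plan is to start from the given morphism $e\colon B\to B$ and produce the sought idempotent by a single explicit correction; parts (2) and (3) will then build on (1). Setting $n:=e^2-e$, the commutativities $ea=ae_1$ and $be=e_3 b$ combined with the idempotency of $e_1$ and $e_3$ give
\[na = ae_1^2-ae_1 = 0,\qquad bn = e_3^2 b-e_3 b = 0.\]
Since $b$ is a pseudo-cokernel of $a$, the relation $na=0$ yields some $h\colon C\to B$ with $n=hb$, and then $bn=0$ forces $bhb=0$. The natural candidate is $e_2:=e+(\id_B-2e)hb$; the identities $e_2 a=ae_1$, $be_2=e_3 b$ and $e_2^2=e_2$ should follow by direct expansion, with $na=0$, $bn=0$ and $bhb=0$ cancelling the cross terms. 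The subtle point is that a purely polynomial Newton-type lift such as $3e^2-2e^3$ does not yield an idempotent without nilpotency of $n$; the extra cancellation $bhb=0$ furnished by the pseudo-cokernel factorisation is what actually drives the argument.

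For part (2), the ``if'' direction follows directly from the pretriangulated axioms: completing $u\oplus u'$ to an exact triangle and identifying it up to isomorphism with $\Delta_1\oplus\Delta_2$ gives exactness by closure under isomorphism. For ``only if'', assume $\Delta_1\oplus\Delta_2$ is exact; complete $u\colon X\to Y$ to an exact triangle $(X,Y,Z'',u,v'',w'')$ in $\T$ and extend the pair of retractions $X\oplus X'\twoheadrightarrow X$, $Y\oplus Y'\twoheadrightarrow Y$ to a morphism of triangles, producing a morphism $Z\oplus Z'\to Z''$. The resulting endomorphisms of the three vertices of $\Delta_1\oplus\Delta_2$ restrict to idempotent projections at $X\oplus X'$ and $Y\oplus Y'$; applying part (1) produces an idempotent on $Z\oplus Z'$ making the whole diagram commute, which splits $\Delta_1\oplus\Delta_2$ as $\Delta_1\oplus\Delta_2'$, and both summands are exact by a symmetric argument.

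For part (3), take the class of triangles described in the statement as the exact triangles of $\widehat{\T}$ and verify the axioms of a (pre)triangulated category. Closure under isomorphism is built into the definition, and $\iota$ is exact because an exact triangle $\tau$ of $\T$ satisfies the defining condition with $\tau'$ the zero triangle. Rotation is inherited from $\T$, and the completion axiom follows by completing in $\T$ a representative of the given morphism of $\widehat{\T}$ and then passing to the appropriate direct summand. The central axiom is the extension axiom: given a morphism between candidate exact triangles in $\widehat{\T}$, represent source and target as direct summands of images under $\iota$ of exact triangles in $\T$, apply the extension axiom in $\T$, and invoke part (1) to produce an idempotent on the middle vertex compatible with the defining idempotents of both objects in $\widehat{\T}$. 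The octahedral axiom in the triangulated case is handled analogously, requiring simultaneous idempotent lifting at several vertices. Uniqueness of the structure follows from part (2): exactness of $\iota$ forces each $\iota(\tau)$ to be exact in $\widehat{\T}$, and any exact triangle in $\widehat{\T}$ must be a direct summand of some such $\iota(\tau)$ by construction of the idempotent completion, which by part (2) pins down the class.

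The principal obstacle will be the simultaneous compatibility of idempotent lifts in part (3), in particular for the octahedral axiom, where commuting idempotents must be produced at several vertices at once whereas part (1) only provides a single correction; a careful induction or a global reformulation producing all required idempotents simultaneously will likely be needed. Part (1) itself is the technical heart of the argument, because the pseudo-cokernel hypothesis is precisely what breaks the naïve Newton polynomial lift and forces the construction to exploit the factorisation through $b$ rather than any algebraic manipulation internal to $\End(B)$.
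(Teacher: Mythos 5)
Your part (1) is correct and complete, and it is in substance the argument of \cite[Lemma 1.13]{BS} (which is all the paper itself offers for this item, its proof being a citation): from $n=e^{2}-e$ you get $na=0$ and $bn=0$, the pseudo-cokernel gives $n=hb$ with $bhb=0$, hence $n^{2}=0$ and $en=ne$, and your correction $e_{2}=e+(1-2e)hb$ is precisely the Newton lift $3e^{2}-2e^{3}$, whose idempotency is exactly the nilpotency $n^{2}=0$ you extracted. So (1) stands.

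The genuine gap is in part (2), direction ``$\Delta_1\oplus\Delta_2$ exact $\Rightarrow$ $\Delta_1$ exact'', and it propagates into (3). After comparing $\Delta_1\oplus\Delta_2$ with a completion $\Delta=(X,Y,Q,u,v_0,w_0)$ of $u$ and invoking part (1), you hold an idempotent endomorphism $(\mathrm{diag}(1,0),\mathrm{diag}(1,0),e_2)$ of the exact triangle $\Delta_1\oplus\Delta_2$; but this does \emph{not} ``split $\Delta_1\oplus\Delta_2$ as $\Delta_1\oplus\Delta_2'$''. First, $\T$ is not idempotent-complete (that is the whole point of the lemma), so $e_2$ need not split; second, and decisively, even a splitting of $e_2$ would exhibit an exact triangle $(X,Y,E,\dots)$ with $E$ the image of $e_2$ --- nothing in your argument relates $e_2$ to the projection $p_Z=\mathrm{diag}(1,0)$ of $Z\oplus Z'$ onto the \emph{given} third object $Z$, which is what exactness of $\Delta_1=(X,Y,Z,u,v,w)$ requires. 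Closing this is the real content of \cite[Lemma 1.6]{BS}. Concretely one needs two further steps: (a) a five-lemma normalization arranging that the composite $\Delta\to\Delta_1\oplus\Delta_2\to\Delta$ is the identity, so that the resulting idempotent $\varepsilon=(p_X,p_Y,e)$ on $\Delta_1\oplus\Delta_2$ is split \emph{through the exact triangle} $\Delta$ (an abstract idempotent from part (1) does not carry this factorization, so part (1) is actually the wrong tool here); and (b) the conjugation trick: $n:=e-p_Z$ satisfies $n(v\oplus v')=0$ and $(w\oplus w')n=0$, hence by exactness $n$ factors through $v\oplus v'$ and $n^{2}=0$, so $\tau:=1+(1-2e)n$ is an automorphism of $Z\oplus Z'$ with $\tau(v\oplus v')=v\oplus v'$, $(w\oplus w')\tau=w\oplus w'$ and $e\tau=\tau p_Z$; conjugating by $(1,1,\tau)$ transports the retraction of $\Delta_1\oplus\Delta_2$ onto $\Delta$ into mutually inverse morphisms between $\Delta_1$ and $\Delta$, and only then is $\Delta_1$ exact. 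Finally, in part (3) you explicitly leave the octahedral axiom unproved (``a careful induction \dots will likely be needed''); that verification is the bulk of \cite[Theorem 1.5]{BS}, and the paper itself flags (TR4) as the nontrivial part, so the triangulated half of (3) is not established by your proposal --- the pretriangulated half and your uniqueness argument would be fine once (2) is repaired.
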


\begin{proof}
	(1) follows from the proof of \cite[Lemma 1.13]{BS}. (2) is \cite[Lemma 1.6]{BS}.  The assertion in (3) on a pretriangulated category follows from the proof of \cite[Theorem 1.5]{BS} and
 the assertion in (3)  on a triangulated category is  \cite[Theorem 1.5]{BS} (the nontrivial part is the verification of (TR4), the octahedron axiom). 

\end{proof}

\begin{proof}[Proof of Proposition~\ref{unique}]
 
	Let $\Sigma$ be the set of triangles in $\D^G$ whose images under $\omega$ are exact triangles in $\D$. 

	{\it Step 1}.   
	We show that $\Sigma$ defines a pretriangulated structure on $\D^G$ under an additional assumption that $\D$ is idempotent-complete\footnote{When the $G$-action is strict, Chen obtained this assertion (i.e., \cite[Lemma 4.4(3)]{XWC}) by applying Balmer's theorem \cite[Theorem 4.1]{Balmer}.  We believe that Balmer's theorem could be adapted to imply our assertion. More precisely, some related commutativity  in  the strict sense required in \cite{Balmer} could be weakened to cover our situation. Anyway, we give a direct proof of our assertion here and our proof essentially follows the proof of \cite[Theorem 4.1]{Balmer}.}. 
In this case, we can use the full strength of the adjoint triple $(\ind, \omega, \ind)$.

We first prove  (TR3). Let us be given a commutative diagram with rows being triangles in $\Sigma$  
\[\xymatrix{X\ar[r]^a\ar[d]^u &Y\ar[r]^b \ar[d]^v & Z\ar[r]^c & X[1]\ar[d]^{u[1]}\\
X'\ar[r]^{a'} & Y'\ar[r]^{b'}  & Z'\ar[r]^{c'} & X'[1].}\]
We have a commutative diagram in $\D$ with rows being exact triangles
\[\xymatrix{\omega(X)\ar[r]^{\omega(a)}\ar[d]^{\omega(u)} & \omega(Y)\ar[r]^{\omega(b)} \ar[d]^{\omega(v)} & \omega(Z)\ar[r]^{\omega(c)}\ar[d]^{r} & \omega(X)[1]\ar[d]^{\omega(u)[1]}\\
\omega(X')\ar[r]^{\omega(a')} & \omega(Y')\ar[r]^{\omega(b')}  &\omega(Z')\ar[r]^{\omega(c')} & \omega(X')[1].}\]
Define $w=\frac{1}{|G|}\sum_{g\in G}r.g$. Then there is a commutative diagram in $\D^G$ \[\xymatrix{X\ar[r]^a\ar[d]^u & Y\ar[r]^b \ar[d]^v & Z\ar[r]^c\ar[d]^w & W\ar[d]^{u}\\
X'\ar[r]^{a'} & Y'\ar[r]^{b'}  & Z'\ar[r]^{c'} & W',}\]
This proves (TR3).

All conditions in (TR1) and (TR2) are ready to be verified except that  each morphism $a\colon X_1\ra X_2$ embeds into a triangle in $\Sigma$, which we now show. 
Let \[\omega(X_1)\overset{\omega(a)}{\lra} \omega(X_2)\overset{d}{\lra} A\overset{f}{\lra}\omega(X_1)[1]\] be an exact triangle in $\D$.  Let $\eta\colon \id\ra \ind \circ \omega$ resp. $\epsilon\colon \ind\circ \omega \ra \id$ be the unit resp. counit of the adjoint pair $(\omega, \ind)$ resp. $(\ind, \omega)$. There is a commutative diagram with rows being triangles in $\Sigma$
 \[\xymatrix@C+10pt{ \ind\circ \omega(X_1)\ar[r]^{\ind\circ \omega(a)}\ar[d]^{e_1} & \ind\circ \omega(X_2) \ar[r]^{\ind(d)}\ar[d]^{e_2} & \ind(A) \ar[r]^{\theta_{\omega(X_1)}\circ\ind(f)}\ar[d]^{e_3} & \ind\circ \omega(X_1)[1]\ar[d]^{e_1[1]}\\
 \ind\circ \omega(X_1)\ar[r]^{\ind\circ \omega(a)}  &  \ind\circ \omega(X_2) \ar[r]^{\ind(d)} & \ind(A) \ar[r]^{\theta_{\omega(X)}\circ\ind(f)} & \ind\circ \omega(X)[1].}\]
 where $e_i$ ($i\in \{1,2\}$) is the idempotent $1-\frac{1}{|G|}\eta_{X_i}\epsilon_{X_i}$ and the existence of $e_3$ follows from (TR3). Since $\ind$ as a left adjoint preserves pseudo-cokernel, $\ind(d)$ is a pseudo-cokernel of $\ind\circ \omega(a)$. So we can take $e_3$ to be an idempotent by Lemma~\ref{BS}(1). 
Moreover, we have $\epsilon_{X_i}=\coker(e_i)$ ($i\in \{1,2\}$) and $u=\coker(e_3)$ exists since $\D^G$ is idempotent-complete.  Then there is a morphism of triangles
\[\xymatrix@C-12pt{\Delta \ar[d]_{\gamma_1} & \ind\circ \omega(X_1)\ar[rr]^{\ind\circ \omega(a)} \ar[d]_{\epsilon_{X_1}} && \ind\circ \omega(X_2) \ar[rr]^{\ind(d)}\ar[d]_{\epsilon_{X_2}} && \ind(A) \ar[rr]^{\theta_{\omega(X_1)}\circ\ind(f)} \ar[d]_u && \ind\circ \omega(X_1)[1]\ar[d]_{\epsilon_{X_1}[1]}\\
\Delta_1 & X_1\ar[rr]^a && X_2\ar[rr]^b && X_3\ar[rr]^c && X_1[1],}\]
where $b$ and $c$ are the unique morphisms making the above diagram commutative.
Furthermore, the commutative diagram 
 \[\xymatrix@C+10pt{ \ind\circ \omega(X_1)\ar[r]^{\ind\circ \omega(a)}\ar[d]^{1-e_1} & \ind\circ \omega(X_2) \ar[r]^{\ind(d)}\ar[d]^{1-e_2} & \ind(A) \ar[r]^{\theta_{\omega(X_1)}\circ\ind(f)}\ar[d]^{1-e_3} & \ind\circ \omega(X_1)[1]\ar[d]^{1-e_1[1]}\\
 \ind\circ \omega(X_1)\ar[r]^{\ind\circ \omega(a)}  &  \ind\circ \omega(X_2) \ar[r]^{\ind(d)} & \ind(A) \ar[r]^{\theta_{\omega(X)}\circ\ind(f)} & \ind\circ \omega(X)[1]}\]
 yields a morphism of triangles 
 \[\xymatrix@C-12pt{\Delta \ar[d]_{\gamma_2} & \ind\circ \omega(X_1)\ar[rr]^{\ind\circ \omega(a)} \ar[d]_{f_1} && \ind\circ \omega(X_2) \ar[rr]^{\ind(d)}\ar[d]_{f_2} && \ind(A) \ar[rr]^{\theta_{\omega(X_1)}\circ\ind(f)} \ar[d]_{f_3} && \ind\circ \omega(X_1)[1]\ar[d]_{f_1[1]}\\
 \Delta_2 & X_1'\ar[rr]^{a'} && X_2'\ar[rr]^{b'} && X_3'\ar[rr]^{c'} && X_1'[1],}\]
 where $f_i=\coker(1-e_i)$ and $a',b',c'$ are the unique morphisms making the  above diagram commutative. Then we have an isomorphism of triangles \[(\gamma_1,\gamma_2)^t\colon \Delta\lra \Delta_1\oplus \Delta_2.\] 
 Now that $\omega(\Delta)$ is an exact triangle in $\D$, so is the direct summand $\omega(\Delta_1)$ by Lemma~\ref{BS}(2). Hence $\Delta_1$ lies in $\Sigma$. This shows that each morphism in $\D^G$ embeds into a triangle in $\Sigma$.

{\it Step 2}. We show that $\Sigma$  defines a pretriangulated structure on $\D^G$  without assuming that $\D$ is idempotent-complete. Let $\widehat{\D}$ be the idempotent-completion of $\D$.  By Lemma~\ref{BS}(3), $\widehat{\D}$ admits a unique pretriangulated structure such that the canonical functor $\iota\colon \D\ra \widehat{\D}$ is exact. By Step 1, the set of triangles in  $\widehat{\D}^G$ whose images under $\omega$ are exact triangles in $\widehat{\D}$ defines a pretriangulated structure such that the forgetful functor $\omega\colon \widehat{\D}^G\ra \widehat{\D}$ is exact. Now that the essential image of the fully faithful functor $\iota^G\colon \D^G\ra \widehat{\D}^G$ induced by the canonical functor $\iota\colon \D\ra \widehat{\D}$ is closed under shifts and taking cone,  $\D^G$ admits a unique pretriangulated structure such that $\iota^G\colon \D^G\ra \widehat{\D}^G$ is exact and the set of exact triangles in $\D^G$ is precisely $\Sigma$. Hence $\Sigma$ defines a pretriangulated structure on $\D^G$. 

{\it Step 3}. We show the uniqueness of the desired pretriangulated structure on $\D^G$. This follows from the well-known fact that if two sets $\Sigma$ and $\Sigma'$ of triangles satisfying $\Sigma'\subset \Sigma$ defines a pretriangulated structure on an additive category then $\Sigma'=\Sigma$.

\end{proof}
\begin{rmk}
The condition that $|G|$ is invertible in $\D$ is necessary. For example,  let $k$ be a field whose characteristic divides $|G|$.  The semisimple abelian category $\mod k$  admits a triangulated structure, while the abelian category $(\mod k)^G\simeq \mod kG$ does not admit a pretriangulated structure since it is not semisimple.
\end{rmk}

As an immediate corollary, we have the following fact on  the uniqueness of a canonical triangulated structure under the assumption of its existence.  
\begin{cor}\label{uniquetri}
	If a triangulated category $\D$ has an admissible $G$-action and $|G|$ is invertible in $\D$ then a canonical triangulated structure  on $\D^G$ is unique once it exists and exact triangles consists of those triangles in $\D^G$ whose images are exact in $\D$.
\end{cor}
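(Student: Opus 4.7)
The plan is to deduce this corollary directly from Proposition~\ref{unique} by observing that any triangulated structure is in particular a pretriangulated structure, so the uniqueness for pretriangulated structures forces uniqueness of the class of exact triangles.

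More concretely, I would first note that if $\D^G$ carries a canonical triangulated structure in the sense of the definition, then by forgetting the octahedral axiom one obtains a pretriangulated structure on $\D^G$ for which the forgetful functor $\omega\colon \D^G\to \D$ is exact. Hence this pretriangulated structure is canonical in the sense of the definition preceding Proposition~\ref{unique}. By Proposition~\ref{unique}, applied under the hypotheses that $G$ acts admissibly on $\D$ and $|G|$ is invertible in $\D$, there is a unique such canonical pretriangulated structure on $\D^G$, and its distinguished triangles are precisely those triangles in $\D^G$ whose images under $\omega$ are exact in $\D$.

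Consequently, the class of exact triangles of any canonical triangulated structure on $\D^G$ is forced to coincide with the set $\Sigma$ of triangles whose image under $\omega$ is exact in $\D$. Since the underlying shift functor $[1]$ on $\D^G$ is already fixed by the admissibility of the action (as described right after Definition~\ref{admissible tri}), the datum of a triangulated structure on $\D^G$ is completely determined by the class of exact triangles together with $[1]$; hence any two canonical triangulated structures on $\D^G$ must agree. This gives the uniqueness statement together with the explicit description of the exact triangles.

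I do not expect any genuine obstacle here: the only subtlety is the conceptual point that ``canonical triangulated'' implies ``canonical pretriangulated'' (so that Proposition~\ref{unique} applies), after which the result is immediate. The octahedral axiom plays no role beyond being part of what must hold once the class of exact triangles is fixed.
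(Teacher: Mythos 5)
Your proposal is correct and matches the paper's intent exactly: the paper states this as an ``immediate corollary'' of Proposition~\ref{unique}, relying on precisely the observation that the underlying pretriangulated structure of any canonical triangulated structure is a canonical pretriangulated structure, whose class of exact triangles is therefore forced to be $\Sigma$. No further comment is needed.
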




Here are two examples of existence of a canonical triangulated structure.
\begin{exm}
	Suppose $G$ acts trivially on a triangulated category $\D$ linear over a field $k$ and $|G|$ is invertible in $k$. Then  $\D^G$ admits a canonical triangulated structure\footnote{This trivial action case was considered in \cite[Proposition 3.3]{KP} but the proof there is flawed.}.  Indeed, let the idempotent-completion $\widehat{\D}$  of $\D$ be equipped with the induced action, which is also a trivial action. Equip $\widehat{\D}^G$  with the canonical pretriangulated structure. Then the equivalence $\widehat{\D}^G\simeq \coprod_{\rho\in \irr(G)} \widehat{\D}$ given in  Proposition~\ref{trivial action} is an exact equivalence. Thus  the canonical pretriangulated structure on $\widehat{\D}^G$ is actually a  triangulated structure.   
	And then we can see that $\D^G$ admits a canonical triangulated structure.
\end{exm}

\begin{exm}
	Suppose $G$ acts admissibly on a triangulated category $\D$ with $|G|$ invertible in $\D$ and suppose $\D^G$ admits a canonical triangulated structure. Then $\widehat{\D}^G$ also admits a canonical triangulated structure and there is an exact equivalence $F\colon \widehat{\D^G}\ra \widehat{\D}^G$, where $F$ is the equivalence defined in the proof of Lemma~\ref{idem}. It sufficies to note that $F$ is exact in the current setup. Indeed, we have $F\circ \iota=\iota^G$, where $\iota^G\colon \D^G\ra \widehat{\D}^G$ is as in Example~\ref{exidem} and $\iota\colon \D^G\ra \widehat{\D^G}$ is the canonical functor. Then one readily concludes from Lemma~\ref{BS} that $F$ is exact. 
\end{exm}

 In general,  we don't know the existence of a canonical triangulated structure. In the remaining of this subsection as well as the next subsection, we  are going to give several instances of existence of a canonical triangulated structure.  We will show that certain formations of a new triangulated category almost commute  with equivariantization. In each case to be considered, we have a natural comparison functor. 
We will prove that such a comparison functor induces an  equivalence between idempotent-completions and thus the functor itself is an  equivalence up to retracts. This will imply the existence of a canonical triangulated structure on the  category of equivariant objects under concern. And then the corresponding comparison functor becomes an exact equivalence up to retracts. This is exactly what we mean by saying ``almost commute''.
 
We start with localization. 
See \cite[Corollary 4.4]{CCZ} for an analogue of the following fact for abelian categories.

\begin{thm}\label{localize}
Suppose $\D$ is a triangulated category with an admissible $G$-action,  $|G|$ is invertible in $\D$, and $\D^G$ admits a canonical triangulated structure.
Let $\C$ be a $G$-invariant triangulated subcategory of $\D$. Then:
\begin{enum2}
\item  The Verdier quotient $\D/\C$ carries an induced admissible $G$-action and $(\D/\C)^G$ admits a unique canonical triangulated structure. 

\item There is a natural exact functor $\D^G/\C^G\ra (\D/\C)^G$ that is  an equivalence up to retracts. In particular, we have  an exact equivalence $\D^G/\C^G\simeq (\D/\C)^G$ if $\D^G/\C^G$ is idempotent-complete.
\end{enum2}
\end{thm}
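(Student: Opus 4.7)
\emph{Descent of the action (part (1)).} Since $\C$ is $G$-invariant, each $F_g$ restricts to $\C$, and the universal property of the Verdier quotient $p\colon \D\to \D/\C$ makes $p\circ F_g$ factor uniquely through an exact autoequivalence $\bar F_g$ of $\D/\C$; the structure isomorphisms $\delta_{g,h}$ and the commuting transforms $\theta_g$ descend in the same way, giving an admissible $G$-action on $\D/\C$. Since $|G|$ remains invertible in $\D/\C$, Proposition~\ref{unique} endows $(\D/\C)^G$ with a canonical pretriangulated structure; uniqueness of a canonical triangulated structure (once known to exist) follows from Corollary~\ref{uniquetri}, while existence will be deduced from the equivalence up to retracts proved in part (2).

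\emph{The comparison functor (part (2)).} The functor $p$ is $G$-equivariant with the obvious structure (from the equality $\bar F_g\circ p = p\circ F_g$), so it induces an exact functor $p^G\colon \D^G\to (\D/\C)^G$. Because $\omega\circ p^G=p\circ\omega$ and $\omega$ is conservative, $p^G$ annihilates $\C^G$ and therefore factors through an exact functor $\Phi\colon \D^G/\C^G\to (\D/\C)^G$ by the universal property of the Verdier quotient $q\colon \D^G\to \D^G/\C^G$.

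\emph{Equivalence up to retracts via separability.} The adjoint pair $\omega\colon \D^G\rightleftarrows \D:\ind$ (with $\omega$ as left adjoint) descends to an adjoint pair $\bar\omega\colon \D^G/\C^G\rightleftarrows \D/\C:\overline{\ind}$: the composite $p\circ\omega$ kills $\C^G$ and produces $\bar\omega$; $q\circ\ind$ kills $\C$ (since $\ind(\C)\subseteq \C^G$) and produces $\overline{\ind}$; the unit and counit descend accordingly. A direct computation identifies the comonad $T=\bar\omega\,\overline{\ind}\cong\bigoplus_g\bar F_g$ on $\D/\C$ with the equivariance comonad of the descended action, so $(\D/\C)_T\cong (\D/\C)^G$ and the Eilenberg--Moore comparison functor $K$ is naturally isomorphic to $\Phi$. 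Separability of $\omega$ (Lemma~\ref{adjoint}, using $|G|$ invertible) provides a natural retraction $\xi\colon\ind\omega\to\id$ of the unit, and its descent $\bar\xi$ through $q$ retracts the descended unit, so $\bar\omega$ is separable by Lemma~\ref{adjoint separable}. Proposition~\ref{separable comonadicity} then gives that $\Phi\cong K$ is an equivalence up to retracts, and Corollary~\ref{comonadicity2} upgrades this to a genuine equivalence whenever $\D^G/\C^G$ is idempotent-complete.

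\emph{Canonical triangulated structure on $(\D/\C)^G$.} As a Verdier quotient of a triangulated category by a triangulated subcategory, $\D^G/\C^G$ is triangulated, so $\widehat{\D^G/\C^G}$ is triangulated by Lemma~\ref{BS}(3). The induced equivalence $\widehat\Phi\colon \widehat{\D^G/\C^G}\xrightarrow{\sim}\widehat{(\D/\C)^G}$ transports this triangulated structure to $\widehat{(\D/\C)^G}$, inside which $(\D/\C)^G$ is a full subcategory closed under suspension and under the cones furnished by its canonical pretriangulated structure (those cones coincide with cones in $\widehat{(\D/\C)^G}$ since the extended forgetful functor $\widehat\omega$ is both exact and conservative), so $(\D/\C)^G$ inherits a triangulated structure making $\omega$ exact; this is the sought canonical triangulated structure. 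The main technical hurdle is the careful matching of the Eilenberg--Moore comonad $T$ with the equivariance comonad on $\D/\C$ (to conclude $K\cong\Phi$) together with verifying that the separable retraction descends cleanly through the Verdier quotient; given these, the remainder is delivered by the comonadic machinery of Section~\ref{sec: comonad} and the Balmer--Schlichting results in Lemma~\ref{BS}.
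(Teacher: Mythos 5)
Your proposal is correct and follows essentially the same route as the paper: descend the adjoint pair $\omega\colon \D^G\rightleftarrows\D:\ind$ to the Verdier quotients, use separability of the descended forgetful functor together with the comonadic comparison (Proposition~\ref{separable comonadicity}/Corollary~\ref{comonadicity2}) to see the induced functor is an equivalence up to retracts, and transport the triangulated structure through the idempotent completions via Lemma~\ref{BS}(3). The only cosmetic difference is that the paper passes to $\widehat{\D/\C}$ and identifies the two comonads $T_1=T_2$ there (so both comparison functors become genuine equivalences), whereas you identify $(\D/\C)_T$ with $(\D/\C)^G$ directly on $\D/\C$; both verifications amount to the same computation.
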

To prove this, we need the following two simple lemmas. 
\begin{lem}[{See e.g. \cite[Lemma 1.1]{orlov3}}]
	\label{localize adjoint}
	Suppose $F: \A\rightleftarrows \B: H$ is an adjoint pair of exact functors between triangulated categories with unit $\eta$ and counit $\epsilon$. Suppose $\A_1, \B_1$ are respectively triangulated subcategories of $\A$ and $\B$ such that $F(\A_1)\subset \B_1, H(\B_1)\subset \A_1$. Then the adjoint pair $(F,H,\eta,\epsilon)$ induces an adjoint pair $\underline{F}: \A/\A_1\rightleftarrows \B/\B_1: \underline{H} $ with unit $\underline{\eta}$ and counit $\underline{\epsilon}$ such that $\underline{\eta}_A=\eta_A/1$ and $\underline{\epsilon}_B=\epsilon_B/1$. Moreover, if $F$ is a separable functor then so is $\underline{F}$. 
 \end{lem}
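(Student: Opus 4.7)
The plan is to descend the adjoint pair to the Verdier quotients via the universal property of localization, then to read off the triangle identities and transfer separability through Lemma~\ref{adjoint separable}.

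First, I would construct $\underline{F}$ and $\underline{H}$. Since $F$ is exact and $F(\A_1)\subset \B_1$, the composition $Q_\B\circ F\colon \A\to \B/\B_1$ is an exact functor annihilating $\A_1$, so the universal property of the Verdier quotient $Q_\A\colon \A\to \A/\A_1$ produces a unique exact functor $\underline{F}\colon \A/\A_1\to \B/\B_1$ with $\underline{F}\circ Q_\A=Q_\B\circ F$. The functor $\underline{H}$ is obtained dually.

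Next, I would descend the unit and counit. Every object of $\A/\A_1$ equals $Q_\A(A)$ for some $A\in \A$, and I set $\underline{\eta}_{Q_\A(A)}:=Q_\A(\eta_A)$, viewed as a morphism from $Q_\A(A)$ to $Q_\A(HFA)=\underline{H}\,\underline{F}(Q_\A(A))$, and dually $\underline{\epsilon}_{Q_\B(B)}:=Q_\B(\epsilon_B)$. The non-trivial point is naturality: an arbitrary morphism $Q_\A(A)\to Q_\A(A')$ is represented by a roof $A\xleftarrow{s}B\xrightarrow{a}A'$ with $\cone(s)\in\A_1$, and I would verify the naturality square for $\underline{\eta}$ by applying $Q_\A$ to the two instances of naturality of $\eta$ at $s$ and at $a$, namely $\eta_A\circ s=HF(s)\circ \eta_B$ and $\eta_{A'}\circ a=HF(a)\circ \eta_B$, and then composing roof-wise. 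The triangle identities for $(\underline{F},\underline{H},\underline{\eta},\underline{\epsilon})$ are then obtained by applying $Q_\A$ (resp.\ $Q_\B$) to the triangle identities of $(F,H,\eta,\epsilon)$ and using $\underline{F}\circ Q_\A=Q_\B\circ F$, $\underline{H}\circ Q_\B=Q_\A\circ H$ together with the explicit formulas for $\underline{\eta},\underline{\epsilon}$ on objects in the image of $Q_\A$; essential surjectivity of $Q_\A$ and $Q_\B$ then yields the identities globally.

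For the separability assertion, if $F$ is separable then Lemma~\ref{adjoint separable} provides a natural transformation $\xi\colon HF\to \id_\A$ with $\xi\eta=\id$. I would set $\underline{\xi}_{Q_\A(A)}:=Q_\A(\xi_A)$; its naturality is checked by the same roof argument as for $\underline{\eta}$, and $\underline{\xi}\,\underline{\eta}=\id$ follows by applying $Q_\A$ to $\xi\eta=\id$. Lemma~\ref{adjoint separable} applied to $(\underline{F},\underline{H})$ then yields separability of $\underline{F}$.

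The main obstacle is the naturality verification in the second step, because morphisms in $\A/\A_1$ are equivalence classes of roofs rather than genuine morphisms of $\A$, so one has to chase the roof representation to invoke naturality of $\eta$ on each leg separately and confirm that the outcome is independent of the chosen representative. Everything else is a direct transport of the adjunction along the exact quotient functors $Q_\A,Q_\B$.
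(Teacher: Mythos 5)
Your proposal is correct and follows the standard argument (the one in the cited Lemma 1.1 of Orlov), which the paper itself does not reproduce: descend $F$ and $H$ through the universal property of the Verdier quotients, define $\underline{\eta},\underline{\xi},\underline{\epsilon}$ by applying the quotient functors, and check naturality on roofs. The only point worth making explicit in the roof computation is that $Q_\A(HF(s))$ is invertible because $HF(\A_1)\subset\A_1$ forces $\cone(HF(s))\in\A_1$; with that noted, everything goes through.
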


 \begin{lem}
	 Let $F: \A\rightleftarrows \B: H$ be an adjoint pair between additive categories with unit $\eta$ and counit $\epsilon$. Then the adjoint pair $(F,H,\eta,\epsilon)$ induces an adjoint pair $\widehat{F}: \widehat{\A}\rightleftarrows \widehat{\B}: \widehat{H}$ between the idempotent-completions with unit $\widehat{\eta}$ and counit $\widehat{\epsilon}$ such that $\widehat{\eta}_{(A,e_A)}=\eta_A\circ e_A$ and $\widehat{\epsilon}_{(B,e_B)}=e_B\circ \epsilon_B$. Moreover, if $F$ is a separable functor then so is $\widehat{F}$.
\end{lem}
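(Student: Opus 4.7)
The plan is to reduce everything to direct manipulation using naturality of $\eta$, $\epsilon$ and the idempotency of the $e_A$'s, keeping in mind the key convention that the identity morphism on $(A,e_A)$ in $\widehat{\A}$ is $e_A$ itself rather than $\id_A$.

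First, I would verify that the proposed formulas give well-defined morphisms in the idempotent-completions. For $\widehat{\eta}_{(A,e_A)}=\eta_A\circ e_A$, viewed as a candidate morphism $(A,e_A)\to \widehat{H}\widehat{F}(A,e_A)=(HFA,HFe_A)$, the condition $HF(e_A)\circ(\eta_A\circ e_A)\circ e_A=\eta_A\circ e_A$ follows from naturality of $\eta$ (giving $HF(e_A)\circ\eta_A=\eta_A\circ e_A$) together with $e_A^2=e_A$. The counit is symmetric using naturality of $\epsilon$.

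Second, I would check that $\widehat{\eta}$ and $\widehat{\epsilon}$ are natural transformations. For a morphism $f\colon(A_1,a_1)\to(A_2,a_2)$ in $\widehat{\A}$, the naturality square for $\widehat{\eta}$ reduces via naturality of $\eta$ to the identity $\eta_{A_2}\circ f\circ a_1=\eta_{A_2}\circ a_2\circ f$, which holds because $f=a_2 f a_1$ (so both sides equal $\eta_{A_2}\circ f$ after using $a_i^2=a_i$). Naturality of $\widehat{\epsilon}$ is analogous.

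Third, I would establish the triangle identities. For instance,
\[
(\widehat{H}\widehat{\epsilon}\circ\widehat{\eta}\widehat{H})_{(B,e_B)}=H(e_B)\circ H(\epsilon_B)\circ \eta_{HB}\circ H(e_B),
\]
and applying the original identity $H\epsilon\circ\eta H=\id_H$ in the middle collapses the expression to $H(e_B)\circ H(e_B)=H(e_B)$, which is precisely $\id_{(HB,He_B)}$ in $\widehat{\A}$. The second triangle identity is completely parallel.

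For the separability statement, if $F$ is separable then by Lemma~\ref{adjoint separable} there exists a natural transformation $\xi\colon HF\to \id_\A$ with $\xi\eta=\id$. The natural candidate is $\widehat{\xi}_{(A,e_A)}:=e_A\circ \xi_A$, viewed as a morphism $(HFA,HFe_A)\to(A,e_A)$; it is well-defined and natural by the same arguments as above (using naturality of $\xi$), and a direct computation gives $\widehat{\xi}_{(A,e_A)}\circ \widehat{\eta}_{(A,e_A)}=e_A\circ \xi_A\circ\eta_A\circ e_A=e_A\circ e_A=e_A=\id_{(A,e_A)}$. Invoking Lemma~\ref{adjoint separable} in the other direction then shows $\widehat{F}$ is separable. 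I do not anticipate any real obstacle here: the only mild subtlety is the convention on identities in the idempotent-completion, which is precisely what makes each formula collapse correctly.
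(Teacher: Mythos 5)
Your proof is correct, and since the paper states this as a ``simple lemma'' without supplying any argument, your write-up is exactly the routine verification the author intended the reader to perform: check well-definedness and naturality of $\widehat{\eta},\widehat{\epsilon}$ via naturality of $\eta,\epsilon$ and $f=a_2fa_1$, verify the triangle identities using that $\id_{(A,e_A)}=e_A$, and transport Rafael's criterion (Lemma~\ref{adjoint separable}) by setting $\widehat{\xi}_{(A,e_A)}=e_A\circ\xi_A$. No gaps.
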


\begin{proof}[Proof of Theorem~\ref{localize}]
Obviously $\C^G$ admits a unique triangulated structure such that the inclusion $\C^G\ra \D^G$ is exact, which  in fact coincides with the canonical triangulated structure. 
The admissible action of $G$ on $\D$ induces an admissible $G$-action on $\D/\C$ in an obvious way. If we endow $(\D/\C)^G$ with the canonical pretriangulated structure then the forgetful funtor $\omega\colon (\D/\C)^G\ra \D/\C$ is exact. 
 The Verdier quotient functor $Q\colon \D\ra\D/\C$ is evidently $G$-equivariant  and then we have an exact functor $Q^G\colon \D^G\ra (\D/\C)^G$. Since $\C^G$ lies in the kernel of $Q^G$,
 $Q^G$ induces an exact functor $F\colon \D^G/\C^G\ra (\D/\C)^G$, which further extends to an exact functor 
 \[\xymatrix{\widehat{F}\colon \widehat{\D^G/\C^G}\ar[r] & \widehat{(\D/\C)^G}}\]
 between the idempotent completions. Moreover, the adjoint pair $\omega\colon \D^G \rightleftarrows \D : \ind$  of exact functors induces an adjoint pair $\underline{\omega}\colon \D^G/\C^G  \ra \D/\C: \underline{\ind},$ which in turn induces an adjoint pair \[\xymatrix{\underline{\widehat{\omega}}\colon \widehat{\D^G/\C^G} \ar@<0.5ex>[r] & \ar@<0.5ex>[l] \widehat{\D/\C}: \underline{\widehat{\ind}};}\]
 on the other hand, the adjoint pair $\omega\colon (\D/\C)^G\rightleftarrows \D/\C: \ind$ induces an adjoint pair \[\xymatrix{\widehat{\omega}: \widehat{(\D/\C)^G}  \ar@<0.5ex>[r] & \ar@<0.5ex>[l] \widehat{\D/\C}: \widehat{\ind}.}\] Then we have two  comonads on $\widehat{\D/\C}$ \[T_1=(F_1=\widehat{\underline{\omega}}\circ \widehat{\underline{\ind}}, \epsilon_1, \Delta_1),\quad T_2=(F_2=\widehat{\omega}\circ \widehat{\ind}, \epsilon_2, \Delta_2).\] One readily sees $T_1=T_2$, which we redenote by $T$. 
 Since $\widehat{\underline{\omega}}$ and $\widehat{\omega}$ are separable functors, by Corollary~\ref{comonadicity2}, we have equivalences \[K_1\colon \widehat{\D^G/\C^G}\overset{\simeq}{\lra} (\widehat{\D/\C})_T,\quad K_2\colon \widehat{(\D/\C)^G}\overset{\simeq}{\lra} (\widehat{\D/\C})_T,\]  where $K_i$'s are the comparison functors. Since $K_2\circ \widehat{F}=K_1$, $\widehat{F}$ is an equivalence. 
 So the pretriangulated structure on $\widehat{(\D/\C)^G}$ is a triangulated structure. It follows that the canonical pretriangulated structure on $(\D/\C)^G$ is a triangulated structure. This finishes the proof. 

\end{proof}

A standard construction of a triangulated category is the stabilization of a Frobenius category, due  to  Heller \cite{heller} and  Happel \cite{happel}. First let us recall  that an \emph{exact category} $(\E, \Sigma)$ (or simply $\E$) is an additive category $\E$ with an exact structure, that is, a collection $\Sigma$ of kernel-cokernel pairs closed under isomorphism and satisfying a system of axioms in the sense of Quillen \cite{quillen}, where a \emph{kernel-cokernel pair} in $\E$ means a pair $X\overset{i}{\ra} Y\overset{d}{\ra} Z$ of composable morphisms in $\E$ for which $i$ is the kernel of $d$ and $d$ is the cokernel of $i$. If $X\overset{i}{\ra} Y\overset{d}{\ra} Z$ lies in $\Sigma$ then it is called a \emph{conflation} in $\E$, $i$  an \emph{inflation} and $d$ a \emph{deflation}.  Here we will follow the following simplified yet equivalent system of axioms given by Keller in \cite[Appendix A]{Keller}:
\setlist[description]{font=\mdseries, style=multiline, labelwidth=4.5em, labelsep=-1em, leftmargin=4pc}
\renewcommand{\descriptionlabel}[1]{\hspace\labelsep \upshape #1}
\begin{description}
\item[$(\Ex0)$] $1_0$ is a deflation.
	\item[$(\Ex1)$] A composition of two deflations is a deflation.
	\item[$(\Ex2)$] For each morphism $f\colon Z'\ra Z$ and each deflation $d\colon Y\ra Z$ in $\A$, there is a cartesian square with $d'$ a deflation \[\xymatrix{Y'\ar[r]^{d'} \ar[d]_{f'} & Z'\ar[d]^f\\ Y\ar[r]^d & Z.}\] 
	\item[$(\Ex2)^{\op}$] For each morphism $h\colon X\ra X'$  and each inflation $i\colon X\ra Y$ in $\E$, there is a co-cartesian square with $i'$ an inflation \[\xymatrix{X\ar[r]^i\ar[d]_h & Y\ar[d]^{h'} \\X'\ar[r]^{i'} & Y'.}\]
\end{description}
For example,  an additive category  admits an exact structure consisting of   split short exact sequences; an abelian category  admits an exact structure consisting of all short exact sequences.

An object $P$ in an exact category $\E$ is called  \emph{projective}  if for each deflation $d\colon X\ra Y$ in $\E$ and each morphism $f\colon P\ra Y$, there exists $u\colon P\ra X$ such that $f=du$.   $\E$ is said to  \emph{have enough projectives} if for each $E\in \E$, there is a deflation $P\ra E$ with $P\in\Proj\E$.  Dually one defines the concept of an \emph{injective} object and \emph{having enough injectives}. We denote by $\Proj \E$ resp. $\Inj \E$ the full subcategory of projective resp. injective objects in $\E$. 
An exact category $\E$ is called a \emph{Frobenius category} if it  has enough projectives and enough injectives and if projectives coincide with injectives.

An additive functor $F\colon \E_1\ra \E_2$ between two exact categories is called \emph{exact} if it preserves exact structures, that is, $F$ maps conflations in $\E_1$ to conflations in $\E_2$. 
An action $\{F_g, \delta_{g,h}\}$ of $G$ on the exact category $\E$ is called \emph{admissible} if $F_g$'s are exact autoequivalences. 
We show that given an admissible action, $\E^G$ admits an induced exact structure.

\begin{lem}\label{equiv exact}
	Let $\E$ be an exact category with an admissible $G$-action. 
	There is a unique maximal  exact structure on $\E^G$ such that the forgetful functor $\omega\colon \E^G\ra \E$ is exact. If $|G|$ is invertible in $\E$ then  we have \[\Proj \E^G=(\Proj \E)^G,\quad \Inj \E^G=(\Inj \E)^G.\] 
\end{lem}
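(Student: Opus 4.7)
The plan is to declare a kernel--cokernel pair $(i,d)$ in $\E^G$ to be a conflation precisely when $(\omega(i),\omega(d))$ is a conflation in $\E$, and then to verify Keller's axioms together with both parts of the lemma.

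I would first check that this collection $\Sigma$ is closed under isomorphism (clear) and genuinely consists of kernel--cokernel pairs. Since $\omega$ admits both a left and a right adjoint $\ind$ by Lemma~\ref{adjoint}, it preserves all limits and colimits, so a kernel--cokernel pair in $\E^G$ maps to one in $\E$. Conversely, if $(\omega(i),\omega(d))$ is a kernel--cokernel pair in $\E$, then since $\omega$ is faithful we have $d\circ i=0$ in $\E^G$, and given any equivariant test morphism $f$ with $d\circ f=0$, its unique factorisation through $\omega(i)$ in $\E$ is automatically $G$-equivariant: applying $F_g$ and conjugating by the relevant equivariant structures yields another such factorisation, which must coincide with the first by uniqueness.

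Next I would verify the simplified axioms $(\Ex0)$--$(\Ex2)^{\op}$. The first two are immediate since $\omega$ preserves identities and composition. For $(\Ex2)$, given $f\colon Z'\ra Z$ in $\E^G$ and a deflation $d\colon Y\ra Z$ in $\E^G$, I form the pullback $Y'$ in $\E$; since each $F_g$ is exact, $F_g(Y')$ is again a pullback, and the equivariant structures on $Y,Z,Z'$ combined with the equivariance of $f$ and $d$ yield canonical isomorphisms $F_g(Y')\ra Y'$ whose cocycle compatibility is forced by the uniqueness of pullback maps. This equips $Y'$ with an equivariant structure making the square a pullback in $\E^G$, and the deflation $Y'\ra Z'$ lies in $\Sigma$ because $\omega$ sends it to a deflation in $\E$. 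The axiom $(\Ex2)^{\op}$ is dual via pushouts. Maximality and uniqueness are then automatic: any exact structure on $\E^G$ rendering $\omega$ exact must lie in $\Sigma$ by the very definition of $\Sigma$.

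Finally, assume $|G|$ is invertible in $\E$ and consider projectives (the injective case being dual). For $(\Proj\E)^G\subseteq \Proj\E^G$, given $P\in (\Proj\E)^G$, a deflation $d\colon X\ra Y$ in $\E^G$, and $f\colon P\ra Y$ in $\E^G$, projectivity of $\omega(P)$ yields a lift $u_0\colon \omega(P)\ra \omega(X)$ of $\omega(f)$ in $\E$; then $u:=\frac{1}{|G|}\sum_{g\in G} u_0.g$ is $G$-invariant in $\hom_\E(\omega(P),\omega(X))$, so defines a morphism $u\colon P\ra X$ in $\E^G$, and equivariance of $f$ and $d$ gives $d\circ u=f$. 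For the reverse inclusion $\Proj\E^G\subseteq (\Proj\E)^G$, given $P\in \Proj\E^G$, a deflation $d\colon X\ra Y$ in $\E$, and $f\colon \omega(P)\ra Y$ in $\E$, observe that $\ind(d)$ is a deflation in $\E^G$ because $\omega\circ\ind=\oplus_g F_g$ preserves conflations; transferring $f$ along the adjunction $(\omega,\ind)$ yields $\tilde f\colon P\ra \ind(Y)$, projectivity of $P$ supplies a lift $\tilde u\colon P\ra \ind(X)$, and its adjoint mate $u\colon \omega(P)\ra X$ is the desired lift of $f$. The main subtlety is the bookkeeping of equivariant structures on pullbacks/pushouts in $(\Ex2)$ and $(\Ex2)^{\op}$, but the uniqueness clauses for these constructions force every required coherence.
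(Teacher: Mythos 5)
Your proposal is correct and follows essentially the same route as the paper: the same exact structure on $\E^G$ (pairs whose images under $\omega$ are conflations), the same pullback/pushout construction of equivariant structures for $(\Ex2)$ and $(\Ex2)^{\op}$, and the same adjunction argument for $\Proj\E^G\subseteq(\Proj\E)^G$ via the deflation $\ind(d)$. The one small divergence is in $(\Proj\E)^G\subseteq\Proj\E^G$, where you average a lift by $\frac{1}{|G|}\sum_{g\in G}u_0.g$ instead of exhibiting $P$ as a retract of the projective object $\ind\circ\omega(P)$ as the paper does; both arguments use the invertibility of $|G|$ in the same essential way and are equally valid.
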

\begin{proof}
Define $\Omega$ be the collection of  pairs of composable morphisms in $\E^G$ whose images under $\omega$ are conflations in $\E$. One readily sees that $\Omega$ consists of kernel-cokernel pairs and is closed under isomorphism.  We show that $\Omega$ defines an exact structure on $\E^G$. Evidently, the axioms $(\Ex0)$ and $(\Ex1)$  hold for $\Omega$. 
	We check $(\Ex2)$. For each $f\in \hom_{\E^G}(Z',Z)$ and each deflation $d\in \hom_{\E^G}(Y,Z)$, there is a cartesian square 
	\[\xymatrix{U\ar[r]^{d'} \ar[d]_{f'} & \omega(Z') \ar[d]^{\omega(f)}\\
	\omega(Y)\ar[r]^{\omega(d)} & \omega(Z),}\]
	where $d'$ is a deflation in $\E$. 
	One easily sees that $U$ admits an equivariant structure $\{\lambda_g^U\}$ such that $Y'=(U, \{\lambda_g^U\}) \in \A^G$ fits into a cartesian square
	\[\xymatrix{Y'\ar[r]^{d'} \ar[d]_{f'} & Z' \ar[d]^f\\
	Y\ar[r]^{d} & Z,}\]
	and $d'$ is a deflation in $\E^G$. Similarly one checks $(\Ex2)^{\op}$. Hence $\Omega$ defines an exact structure on  $\E^G$. Obviously, if another collection $\Omega'$ of kernel-cokernel pairs defines an exact structure on $\E^G$ such that $\omega\colon \E^G\ra \E$ is exact then $\Omega'\subset \Omega$. Thus $\Omega$ is the unique maximal exact structure with the desired property.

Suppose $|G|$ is invertible in $\E$. We want to show $\Proj \E^G=(\Proj \E)^G$ and similar arguments allow one to conclude  $ \Inj \E^G= (\Inj \E)^G.$  Obviously  $\Proj\E$ is closed under the action of $G$ and so the expression $(\Proj\E)^G$ makes sense. We first show $(\Proj\E)^G\subset \Proj\E^G$. 
	Let $P\in (\Proj\E)^G$. 
	We have $\ind\circ \omega(P)\in \Proj\E^G$. 
	Since the composition  $P\ra \ind\circ \omega(P)\ra P$ of adjunctions is $|G|\cdot \id_P$, $P$ is projective by ~\cite[Corollary 11.4]{Theo}. 
This shows  $(\Proj\E)^G\subset \Proj\E^G$.
Now we show the converse inclusion. 
Suppose $P\in \Proj\E^G$. 
	Let $v\colon Y\ra Z$ be a deflation in $\E$ and $u\colon \omega(P)\ra Z $ be a morphism in $\E$. Denote by $\tau$ the counit of the adjoint pair $(\omega, \ind)$. Let $h$ be the image of $u$ under the adjunction map $\hom(\omega(P), Z)\cong \hom(P, \ind(Z))$. 
	Then we have a commutative diagram 
	\[\xymatrix{& \omega(P)\ar[d]^{\omega(h)} \ar[dl]_{\omega(f)}\ar@/^2.5pc/[dd]^u\\
		\omega\circ \ind(Y)\ar[r]_{\omega\circ\ind(v)} \ar[d]_{\tau_Y} & \omega\circ\ind(Z)\ar[d]^{\tau_Z}\\
	Y\ar[r]^v & Z,}\]
		where the existence of $f$ follows from the fact that $P$ is projective in $\E^G$.  Then we have $u=vw$, where $w=\tau_Y\circ \omega(f)$. Thus $\omega(P)\in \Proj\E$. This shows $\Proj\E^G\subset (\Proj\E)^G$. Hence  $\Proj\E^G= (\Proj\E)^G$.

\end{proof}

\begin{cor}\label{Fro}
	Let $\E$ be a Frobenius category with an admissible $G$-action and suppose $|G|$ is invertible in $\E$.  Endow $\E^G$ with the  exact structure  asserted in Lemma~\ref{equiv exact} and suppose further that $\E^G$ has enough projectives and injectives (say, when $\E$ is idempotent-complete). Then $\E^G$ is a Frobenius category. 
	
\end{cor}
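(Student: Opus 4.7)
The plan is to reduce everything to Lemma~\ref{equiv exact}. That lemma already identifies $\Proj \E^G = (\Proj \E)^G$ and $\Inj \E^G = (\Inj \E)^G$ under our invertibility hypothesis. Since $\E$ is Frobenius, $\Proj \E = \Inj \E$ as strictly full subcategories of $\E$, and this subcategory is visibly $G$-invariant (its closure under $G$ follows because each $F_g$ is an exact autoequivalence, so $F_g$ preserves both the projective and injective objects). Passing to equivariantization gives $(\Proj \E)^G = (\Inj \E)^G$, hence $\Proj \E^G = \Inj \E^G$. Combined with the assumed existence of enough projectives and injectives in $\E^G$, this yields that $\E^G$ is Frobenius.

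The only substantive point is the parenthetical claim that idempotent-completeness of $\E$ already guarantees enough projectives and injectives in $\E^G$. For this, given $X \in \E^G$, I would start from a deflation $p\colon P \to \omega(X)$ in $\E$ with $P \in \Proj \E$. Applying $\ind$ and using that $\omega \ind = \bigoplus_{g\in G} F_g$ is a sum of exact functors, $\ind(p)\colon \ind(P) \to \ind\,\omega(X)$ is a deflation in $\E^G$ (since $\omega$ reflects deflations in our exact structure by construction). Meanwhile the counit $\epsilon_X\colon \ind\,\omega(X) \to X$ is a split epimorphism, because the composition $X \xrightarrow{\eta_X} \ind\,\omega(X) \xrightarrow{\epsilon_X} X$ of adjunctions equals $|G|\cdot \id_X$ by Lemma~\ref{adjoint}(1), and $|G|$ is invertible in $\E$, so $\tfrac{1}{|G|}\eta_X$ splits $\epsilon_X$. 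Split epimorphisms are always deflations, so the composition $\epsilon_X \circ \ind(p)\colon \ind(P) \to X$ is a deflation with $\ind(P) \in (\Proj \E)^G = \Proj \E^G$. This provides enough projectives; the dual argument, using the other adjunction $(\ind,\omega)$, provides enough injectives. Note that for this step we only need $\E^G$ to support the exact structure from Lemma~\ref{equiv exact}, which it does whenever $\E$ is idempotent-complete (so that $\E^G$ is too, by Lemma~\ref{adjoint}(4), ensuring that kernels/cokernels and pushout/pullback constructions needed for the axioms behave well).

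There is no real obstacle; the argument is essentially a bookkeeping application of Lemma~\ref{equiv exact} together with the $(\ind,\omega)$ adjunction machinery. The only mild subtlety is to confirm that $\epsilon_X$ is a deflation rather than merely a retraction, which uses the fact that split epimorphisms are deflations in any exact category together with the invertibility of $|G|$.
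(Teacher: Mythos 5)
Your proof is correct and follows essentially the same route as the paper: both deduce $\Proj \E^G=\Inj \E^G$ from Lemma~\ref{equiv exact} and, for the parenthetical claim, produce a deflation $\ind(P)\to X$ by composing $\ind$ of a deflation onto $\omega(X)$ with the split counit $\ind\circ\omega(X)\to X$. One small caveat: a split epimorphism is a deflation only when it admits a kernel, which here is guaranteed by idempotent-completeness (the paper cites this explicitly), not by a general fact valid in every exact category.
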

\begin{proof}
		We first show that if $\E$ is idempotent-complete then $\E^G$  contains enough projectives and injectives. Let $X\in \E^G$. The adjunction $\ind \circ\omega(X)\ra X$ is a split epimorphism and thus is a deflation since $\E$ is idempotent-complete.  Moreover,  we have a deflation $f\colon M \ra \omega(X)$ in $\E$ with $M$ projective. Then the composition of deflations \[\ind(M)\overset{\ind(f)}{\lra} \ind\circ\omega(X)\lra X\] yields a deflation $\ind(M)\ra X$. 
	This allows us to conclude that $\E^G$ has enough projectives  since $\ind(M)$ is projective in $\E^G$. Similar arguments apply to conclude that $\E^G$ has enough injectives. 

Now we show $\E^G$ is Frobenius under the given assumptions.  By Lemma~\ref{equiv exact},  we have $\Proj \E^G=\Inj \E^G$. So $\E^G$ is a Frobenius category provided that $\E^G$ contains enough projectives and injectives. 

\end{proof}

Let $\E$ be a Frobenius category. Let us recall from \cite{happel} the definiton of  the stable category $\underline{\E}$ of $\E$. $\underline{\E}$ shares the same objects with $\E$. For $X, Y\in \underline{\E}$, \[\hom_{\underline{\E}}(X,Y)\coloneq \hom_{\E}(X,Y)/\I(X,Y),\] where $\I(X,Y)$ is the subgroup consisting of those morphisms $f\in \hom_\E(X,Y)$ such that $f$ factors through an injective object in $\E$. 
$\underline{\E}$ is a triangulated category with translation functor $\SSS$ such that for each $X\in \underline{\E}$, $\SSS X$ fits into a chosen conflation $X\ra \I X\ra \SSS X$ in $\E$, where $\I X$ is injective in $\E$. 
An exact triangle in $\underline{\E}$ is a triangle in $\underline{\E}$ that is isomorphic to a triangle  $X\overset{\bar{u}}{\ra} Y\overset{\bar{v}}{\ra} Z\overset{-\bar{w}}{\ra} \SSS X$ in $\underline{\E}$ 
for which there is a commutative diagram in $\E$
\[\xymatrix{X\ar[r]^u\ar@{=}[d] & Y\ar[r]^v\ar[d] & Z \ar[d]^{w}\\
X\ar[r] & \I X \ar[r] & \SSS X,}\]
where the upper row is a conflation in $\E$ and the lower row is the chosen conflation  to define $\SSS$.

\begin{thm}\label{stable}	
	With the same assumptions as in Corollary~\ref{Fro}, we have:	
	\begin{enum2} 
	\item $\underline{\E}$ carries an induced admissible $G$-action and $\underline{\E}^G$ admits a unique canonical triangulated structure. 
	
	\item There is a natural  exact functor $\underline{\E^G}\ra \underline{\E}^G$ that is always an exact equivalence up to retracts and that is an exact equivalence if $\underline{\E^G}$ is idempotent-complete. 
	\end{enum2}

\end{thm}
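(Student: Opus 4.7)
The plan is the following. For (1), I first observe that each exact autoequivalence $F_g$ preserves $\Proj\E = \Inj\E$, hence descends to an autoequivalence $\underline{F}_g$ of $\underline{\E}$; the natural isomorphisms $\delta_{g,h}$ descend similarly to satisfy the same $2$-cocycle condition. To achieve admissibility, I construct natural isomorphisms $\theta_g\colon \underline{F}_g\circ \SSS \to \SSS\circ \underline{F}_g$ by comparing, for each object $X$, the conflation $F_g X \to F_g \I X \to F_g \SSS X$ obtained by applying the exact functor $F_g$ to the chosen conflation defining $\SSS X$ with the chosen conflation $F_g X \to \I(F_g X) \to \SSS(F_g X)$ defining $\SSS(F_g X)$; the $2$-cocycle condition on $\{\delta_{g,h}\}$ together with standard Frobenius-category arguments guarantees that $\{\theta_g^{-1}\}$ makes $\SSS$ a $G$-equivariant functor. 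Since $|G|$ is invertible in $\underline{\E}$, Proposition~\ref{pretri} then provides a unique canonical pretriangulated structure on $\underline{\E}^G$.

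For (2), Corollary~\ref{Fro} says that $\E^G$ is a Frobenius category, so $\underline{\E^G}$ is well-defined and triangulated. The forgetful functor $\omega\colon \E^G\to \E$ is exact and, by Lemma~\ref{equiv exact}, sends $\Inj \E^G = (\Inj\E)^G$ into $\Inj\E$; the induction functor $\ind$ preserves injectives for the analogous reason. Both therefore descend to an adjoint triple $(\underline{\ind}, \underline{\omega}, \underline{\ind})$ between $\underline{\E^G}$ and $\underline{\E}$. On the equivariant level, applying the descent of equivariant structures (in the spirit of the functor $F$ constructed in the proof of Lemma~\ref{idem}) yields a natural exact functor $\Phi\colon \underline{\E^G} \to \underline{\E}^G$, which is the desired comparison.

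To show that $\Phi$ is an equivalence up to retracts, I follow the pattern of the proof of Theorem~\ref{localize}. The adjoint pair $(\underline{\omega},\underline{\ind})$ above and the adjoint pair $\omega\colon \underline{\E}^G\rightleftarrows \underline{\E}\colon \ind$ coming from the canonical pretriangulated structure both give rise, after idempotent completion, to comonads $T_1$ and $T_2$ on $\widehat{\underline{\E}}$. A direct inspection shows $T_1=T_2$: both have underlying endofunctor $\bigoplus_{g\in G}\underline{F}_g$, counit given by projection onto the summand $g=e$ composed with the trivialization $\underline{F}_e\simeq \id$, and comultiplication induced by the diagonal $G\to G\times G$. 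Since $|G|$ is invertible, Lemma~\ref{adjoint}(2) makes both left adjoints separable, and then Corollary~\ref{comonadicity2} identifies each of $\widehat{\underline{\E^G}}$ and $\widehat{\underline{\E}^G}$ with the Eilenberg--Moore category of this common comonad. Since $\widehat\Phi$ intertwines the two comparison functors, it is itself an equivalence.

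With $\widehat\Phi$ an equivalence, the triangulated structure on $\widehat{\underline{\E^G}}$ (provided by Lemma~\ref{BS}(3) applied to the triangulated category $\underline{\E^G}$) transports to a triangulated structure on $\widehat{\underline{\E}^G}$; the uniqueness portion of Proposition~\ref{pretri} forces it to agree with the canonical pretriangulated structure on $\widehat{\underline{\E}^G}$, so the latter is in fact triangulated. Restricting along the fully faithful functor $\iota^G\colon \underline{\E}^G\to \widehat{\underline{\E}^G}$, exactly as in the example immediately preceding the theorem, yields the desired canonical triangulated structure on $\underline{\E}^G$, whose uniqueness is Corollary~\ref{uniquetri}. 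Finally, $\Phi$ itself is an exact equivalence up to retracts, and becomes an equivalence whenever $\underline{\E^G}$ is already idempotent-complete. I expect the main technical obstacle to lie in the careful construction of $\theta_g$ and the verification of the admissibility axioms on $\underline{\E}$, together with the identification of the two comonads on $\widehat{\underline{\E}}$ as comonads rather than merely as endofunctors.
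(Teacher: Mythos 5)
Your proposal is correct and follows essentially the same route as the paper: induce the admissible action on $\underline{\E}$, build the comparison functor $\underline{\E^G}\to\underline{\E}^G$ (the paper packages its exactness via the $\partial$-functor Lemmas~\ref{partial} and~\ref{partial exact}, where you argue more directly), identify the two comonads on $\widehat{\underline{\E}}$ arising from $(\underline{\omega},\underline{\ind})$ and $(\omega,\ind)$, and apply separability with Corollary~\ref{comonadicity2} to get an equivalence of idempotent completions, which then transports the triangulated structure. The only differences are presentational; no gap.
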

The proof of this theorem is in spirit much like the proof of Theorem~\ref{localize} whereas there are some different ingredients. 
In particular, we need  recall a lemma on a $\partial$-functor in the sense of Keller \cite{keller2}.  Recall that given  a Frobenius category $\E$ and  a pretriangulated category $\D$, a $\partial$-functor $F\colon \E\ra \D$ is an additive functor $F$ such that for any conflation \[\nabla\colon X\overset{u}{\lra} Y \overset{v}{\lra} Z\] in $\E$, there is a morphism $\alpha_\nabla\colon F(Z)\ra F(X)[1]$ making 
\[F(X)\overset{F(u)}{\lra} F(Y) \overset{F(v)}{\lra} F(Z) \overset{\alpha_\nabla}{\lra} F(X)[1]\]
an exact triangle in $\D$, functorial in the  sense that given a morphism 
\[\xymatrix@C-8pt{\nabla \ar[d] &  X\ar[rr]^u\ar[d]^r && Y \ar[rr]^v \ar[d]^s &&  Z\ar[d]^t\\
	\nabla' &  X'\ar[rr]^{u'} &&  Y' \ar[rr]^{v'} && Z'}\]
of conflations, the following diagram commutes
\[\xymatrix{F(X)\ar[r]^{F(u)}\ar[d]^{F(r)} &  F(Y) \ar[r]^{F(v)} \ar[d]^{F(s)} &  F(Z)\ar[d]^{F(t)}\ar[r]^{\alpha_\nabla} & F(X)[1]\ar[d]^{F(r)[1]}\\
F(X')\ar[r]^{F(u')} &  F(Y') \ar[r]^{F(v')} &  F(Z') \ar[r]^{\alpha_{\nabla'}} & F(X')[1].}\]

\begin{lem}[{See e.g. \cite[Lemma 2.5]{chen2}}]
	\label{partial exact}
	If $F\colon \E\ra \D$ is a $\partial$-functor, where $\E$ is a Frobenius category and $\D$ is a pretriangulated category, and $\Inj \E\subset \ker F$ then $F$ induces an exact functor $\underline{F}\colon \underline{\E}\ra \D$. Consequently, if $H\colon \E\ra \E'$ is an exact fuctor between Frobenius categories preserving injectives then $H$ induces an exact functor $\underline{H}\colon \underline{\E}\ra \underline{\E'}$. 
\end{lem}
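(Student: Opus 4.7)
The plan is to build $\underline{F}$ in two steps: first produce it as an additive functor, then upgrade to an exact functor, and finally derive the corollary about $\underline{H}$ by composing.

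\textbf{Step 1: descent to $\underline{\E}$.} I would first check that $F$ annihilates the ideal $\I$ of morphisms factoring through injectives. If $f\colon X\to Y$ factors as $X\xrightarrow{a} I\xrightarrow{b} Y$ with $I\in\Inj\E$, then by assumption $F(I)=0$, so $F(a)=0$ and hence $F(f)=F(b)F(a)=0$. Therefore $F$ induces a unique additive functor $\underline{F}\colon\underline{\E}\to \D$ with $\underline{F}\circ p=F$, where $p\colon \E\to \underline{\E}$ is the canonical projection.

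\textbf{Step 2: comparison with the shift $\SSS$.} For each $X\in\E$, apply $F$ to the chosen conflation $\nabla_X\colon X\to \I X\to \SSS X$; this yields an exact triangle
\[
F(X)\xrightarrow{F(u_X)} F(\I X)\xrightarrow{F(v_X)} F(\SSS X)\xrightarrow{\alpha_{\nabla_X}} F(X)[1]
\]
in $\D$. Since $F(\I X)=0$, the morphism $\alpha_{\nabla_X}\colon F(\SSS X)\to F(X)[1]$ is an isomorphism. Denote it $\beta_X$. I would check functoriality: given $r\colon X\to X'$, pick (via $(\Ex2)^{\op}$ and injectivity) a morphism of conflations $\nabla_X\to\nabla_{X'}$ lifting $r$ and $\SSS r$, and apply the functoriality axiom of the $\partial$-functor to conclude that the $\beta$'s assemble into a natural isomorphism $\beta\colon \underline{F}\SSS\Rightarrow [1]\underline{F}$. (Different choices of lifts of $\SSS r$ differ by maps factoring through $\I X'$, which are killed in $\D$.)

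\textbf{Step 3: exact triangles go to exact triangles.} By definition, any exact triangle in $\underline{\E}$ is isomorphic to one of the form $X\xrightarrow{\bar u}Y\xrightarrow{\bar v}Z\xrightarrow{-\bar w}\SSS X$ coming from a commutative diagram whose upper row is a conflation $\nabla\colon X\to Y\to Z$ and whose lower row is $\nabla_X$, joined by a morphism $w\colon Z\to \SSS X$. Applying $F$ to $\nabla$ gives an exact triangle in $\D$ ending in $\alpha_\nabla\colon F(Z)\to F(X)[1]$. Applying the functoriality axiom of the $\partial$-functor to the morphism $\nabla\to\nabla_X$ produces the identity on $F(Z)$ on the middle column (using $F(\I X)=0$) and forces $\alpha_\nabla=\beta_X\circ F(w)$. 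Composing with $-\beta_X^{-1}$ and using rotation/sign conventions in the pretriangulated $\D$, one sees that $\underline{F}$ sends the given triangle to an exact triangle in $\D$, after identifying $\underline{F}(\SSS X)$ with $\underline{F}(X)[1]$ via $\beta_X$. The main obstacle here is mostly bookkeeping with signs and with the fact that $\D$ need only be pretriangulated (so I must cite explicit axioms rather than use the octahedron), but the content is already packaged in the functoriality axiom of the $\partial$-functor.

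\textbf{Step 4: the corollary.} Given an exact functor $H\colon \E\to\E'$ preserving injectives, I would consider the composition $G=p'\circ H\colon \E\to \underline{\E'}$, where $p'\colon \E'\to\underline{\E'}$. Since $H$ is exact, every conflation in $\E$ is sent by $H$ to a conflation in $\E'$, hence (by the definition of the triangulated structure on $\underline{\E'}$) produces a canonical exact triangle in $\underline{\E'}$; functoriality is immediate. Thus $G$ is a $\partial$-functor. Since $H(\Inj\E)\subseteq\Inj\E'$, we have $G(\Inj\E)=0$. Applying the first part to $G$ produces an exact functor $\underline{H}\colon\underline{\E}\to\underline{\E'}$, evidently sending an object $X$ to $H(X)$ and a class $\bar f$ to $\overline{H(f)}$. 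This completes the proof.
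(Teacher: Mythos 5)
The paper itself gives no proof of Lemma~\ref{partial exact}: the statement is quoted from the literature (Chen, \cite[Lemma 2.5]{chen2}), so there is no in-paper argument to compare yours against. Your proposal reconstructs the standard proof, and Steps 1, 2 and 4 are correct as written. (Step 4 silently uses the classical fact, due to Happel, that the projection $\E'\ra \underline{\E'}$ is itself a $\partial$-functor; the paper also uses this fact without proof, e.g.\ in the proof of Theorem~\ref{stable}, so that level of citation is consistent with the paper.)

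The one place where your write-up is wrong as stated is the sign in Step 3, and it is not ``mostly bookkeeping''. With the paper's convention, the standard triangle attached to a conflation $\nabla\colon X\overset{u}{\ra}Y\overset{v}{\ra}Z$ is $(\bar u,\bar v,-\bar w)$; so if you identify $\underline{F}(\SSS X)$ with $F(X)[1]$ via $\beta_X=\alpha_{\nabla_X}$, as you propose, the image triangle is $(F(u),F(v),-\alpha_\nabla)$, by your (correct) identity $\alpha_\nabla=\beta_X\circ F(w)$. But an exact triangle with the sign of exactly one of its three maps reversed need not be exact: diagonal-sign isomorphisms of triangles realize only an even number of sign changes, and rotation only produces shifted triangles with all three signs changed, so no ``rotation/sign convention'' argument can close this step. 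Concretely, in $\D^b(\mod \ZZ)$ consider the exact triangle $\ZZ/3\overset{\iota}{\ra}\ZZ/9\overset{\pi}{\ra}\ZZ/3\overset{w}{\ra}\ZZ/3[1]$, where $w\neq 0$ is the class of the extension (an element of order $3$); if $(\iota,\pi,-w)$ were exact, a morphism of triangles $(1,1,f_3)\colon (\iota,\pi,w)\ra(\iota,\pi,-w)$ would exist, the middle square forces $f_3=1$, and the third square then gives $w=-w$, a contradiction. The repair is exactly what your parenthetical ``composing with $-\beta_X^{-1}$'' gropes toward, but it must be built into the data: take the structural isomorphism of the exact functor to be $-\beta\colon \underline{F}\SSS\Rightarrow [1]\underline{F}$ (the negative of a natural isomorphism is again one). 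With this choice the image of a standard triangle is literally $(F(u),F(v),\alpha_\nabla)$, exact by the $\partial$-functor axiom, and closure of exact triangles under isomorphism of triangles finishes the argument. A small further slip: in the morphism of conflations $\nabla\ra\nabla_X$ it is the left column, $\id_X$, that yields your identity $\alpha_\nabla=\beta_X\circ F(w)$, not ``the identity on $F(Z)$ on the middle column''.
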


We also need the following two simple lemmas, whose proofs are left to the reader.
\begin{lem}\label{partial}
	Let $\E$  and $\E^G$ be  as assumed  in Corollary~\ref{Fro}. Let $\D$ be a pretriangulated category with an admissible $G$-action and suppose $|G|$ is invertible  in $\D$.
 If $F\colon \E\ra \D$ is a $G$-equivariant $\partial$-functor then the induced functor $F^G\colon \E^G\ra \D^G$ is also a $\partial$-functor, where $\D^G$ is equipped with the canonical pretriangulated structure.
\end{lem}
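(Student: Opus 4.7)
The plan is to construct a $\partial$-functor structure on $F^G$ directly from the one on $F$ together with its $G$-equivariant structure $\phi_g\colon FM_g\cong N_gF$. For a conflation $\nabla\colon X\xrightarrow{u}Y\xrightarrow{v}Z$ in $\E^G$, applying $\omega$ yields a conflation $\omega\nabla$ in $\E$, and $F$ being a $\partial$-functor supplies a morphism $\alpha:=\alpha_{\omega\nabla}\colon F\omega Z\to F\omega X[1]$ fitting into an exact triangle $F\omega X\xrightarrow{F\omega u}F\omega Y\xrightarrow{F\omega v}F\omega Z\xrightarrow{\alpha}F\omega X[1]$ in $\D$. Since $F\omega u=\omega F^Gu$ and $F\omega v=\omega F^Gv$, by Proposition~\ref{pretri} the task reduces to exhibiting a $G$-invariant $\beta_\nabla\colon F^GZ\to F^GX[1]$ whose underlying morphism in $\D$ is a valid connecting morphism for $(F\omega u,F\omega v)$.

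First I would verify that each translate $\alpha\cdot g$ under the induced $G$-action on $\hom_\D(F\omega Z,F\omega X[1])$ is itself a valid connecting morphism: applying the exact autoequivalence $F_g^\D$ to the triangle above, then composing with the equivariant structure isomorphisms $\kappa^{F^G\cdot}_g=F(\lambda_g^\cdot)\circ\phi_{g,\omega\cdot}^{-1}$ and the shift-commutation isomorphism $\theta_{g,F\omega X}$, yields an isomorphic exact triangle whose first two morphisms are $F\omega u$ and $F\omega v$ (unchanged, since they are $G$-equivariant) and whose connecting morphism is precisely $\alpha\cdot g$. Since $|G|$ is invertible in $\D$, I would then set $\beta_\nabla:=\tfrac{1}{|G|}\sum_{g\in G}\alpha\cdot g$, which is automatically $G$-invariant and therefore defines a morphism in $\hom_{\D^G}(F^GZ,F^GX[1])$.

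The main obstacle is to ensure that the averaged $\beta_\nabla$ is still a valid connecting morphism in $\D$; the set of valid connecting morphisms for fixed first two morphisms is a torsor over a non-linear group, so a convex combination of valid ones need not be valid. To circumvent this I would adapt Step~1 of the proof of Proposition~\ref{unique}: apply the exact functor $\ind\colon\D\to\D^G$ (exact as a direct sum of the exact autoequivalences $F_g^\D$, intertwined with the shift via the $\theta_g$) to the exact triangle above, obtaining an exact triangle in $\D^G$ with first three terms $\ind F\omega X,\ind F\omega Y,\ind F\omega Z$ and connecting morphism $\theta\circ\ind\alpha$. The retraction idempotents $e_1:=\tfrac{1}{|G|}\eta_{F^GX}\varepsilon_{F^GX}$ and $e_2:=\tfrac{1}{|G|}\eta_{F^GY}\varepsilon_{F^GY}$ commute with $\ind(F\omega u)$ by naturality of $\eta,\varepsilon$ with respect to $F^Gu$; invoking TR3 in the pretriangulated category $\D^G$ together with Lemma~\ref{BS}(1) produces a compatible idempotent on $\ind F\omega Z$, and splitting along this morphism of triangles yields, by Lemma~\ref{BS}(2), a direct summand which is an exact triangle in $\D^G$ whose first two objects and first morphism are $F^GX,F^GY,F^Gu$. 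The middle map is identified with $F^Gv$ via the naturality square $\varepsilon_{F^GZ}\circ\ind(F\omega v)=F^Gv\circ\varepsilon_{F^GY}$, while the third object $Z''$ (a priori only some direct summand of $\ind F\omega Z$) is identified with $F^GZ$ by constructing a canonical morphism $Z''\to F^GZ$ in $\D^G$ through the retraction $\varepsilon_{F^GZ}$, showing it is an isomorphism in $\D$ (by comparing both as cones of $F\omega u$), and lifting to $\D^G$ using the conservativity of $\omega$ from Lemma~\ref{adjoint}(1).

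Functoriality of the assignment $\nabla\mapsto\beta_\nabla$ would then follow from the functoriality of $\alpha_{\omega\nabla}$ in $F$'s $\partial$-structure combined with the naturality of $\eta$ and $\varepsilon$, completing the verification that $F^G$ is a $\partial$-functor.
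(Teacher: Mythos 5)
The paper leaves this proof to the reader, so there is nothing to compare against; judged on its own, your argument has a genuine gap, and in fact its overall strategy cannot succeed. Everything you do after choosing $\alpha=\alpha_{\omega\nabla}$ uses only the following data: two morphisms $F^Gu,F^Gv$ of $\D^G$ together with \emph{some} morphism $\alpha$ in $\D$ completing their $\omega$-images to an exact triangle of $\D$. That data is insufficient. Take $G=\ZZ/2$ acting trivially on $\D=\D^b(\mod k)$ with $\mathrm{char}\,k\neq 2$, let $A=k$ carry the trivial character, $C=k[1]$ the sign character, and consider $A\xrightarrow{0}0\xrightarrow{0}C$: any nonzero $c\colon C\to A[1]$ completes this to an exact triangle in $\D$, each $c.g=-c$ is again a valid connecting morphism, yet the average is $0$ and no $G$-invariant valid connecting morphism exists ($C\not\cong A[1]$ in $\D^G$). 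This also kills the idempotent-splitting workaround: running your construction here, the compatible idempotent $e_3$ on $\ind\omega C\cong C\oplus A[1]$ is forced to be the projection onto the summand $A[1]$, so $Z''\cong A[1]\not\cong F^GZ=C$, and your comparison map $\epsilon_{F^GZ}\circ\iota_{Z''}$ is zero. The precise point where your write-up breaks is the claim that $Z''\to F^GZ$ ``is an isomorphism in $\D$ by comparing both as cones of $F\omega u$'': two cones of the same morphism are abstractly isomorphic, but a \emph{given} map between them commuting only with the maps out of $F\omega Y$ differs from a genuine isomorphism of triangles by a summand factoring through the connecting morphism, and such a perturbation of an isomorphism need not be invertible. (The same invertibility failure is exactly why the naive average fails, so the workaround has not gained anything.) A smaller issue is that splitting $e_3$ requires passing to $\widehat{\D}{}^G$, since $\D^G$ is not assumed idempotent-complete.

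What is missing is any use of the defining \emph{functoriality} of the $\partial$-structure $\nabla\mapsto\alpha_\nabla$; you invoke it only for the final naturality check, but it is the essential ingredient. Applying it to the isomorphism of conflations $(\lambda_g^X,\lambda_g^Y,\lambda_g^Z)\colon F_g(\omega\nabla)\to\omega\nabla$ gives $\alpha_{\omega\nabla}\circ F(\lambda_g^Z)=F(\lambda_g^X)[1]\circ\alpha_{F_g(\omega\nabla)}$, and combining this with the compatibility of the $\partial$-structure with the equivariant structure $\{\phi_g\}$ of $F$ (i.e.\ that $\alpha_{F_g\nabla'}$ agrees with the transport of $\alpha_{\nabla'}$ along $\phi_g$ and $\theta_g$ --- which holds for the canonical $\partial$-structures in all the paper's applications, and is reasonably read into the phrase ``$G$-equivariant $\partial$-functor'') shows that $\alpha_{\omega\nabla}$ is \emph{already} $G$-invariant, hence is itself a morphism $F^GZ\to F^GX[1]$ in $\D^G$; exactness of the resulting triangle in $\D^G$ is then immediate from Proposition~\ref{pretri}, and functoriality is inherited. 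Without some such compatibility your averaging/splitting machinery has no way to rule out the character-twist obstruction exhibited above.
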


	\begin{lem}
		Let $F: \E_1\rightleftarrows \E_2: H$ be an adjoint pair of exact functors between Frobenius categories. Suppose further $F$ and $H$ preserve injective objects. Then the adjoint pair  $(F, H)$ induces an adjoint pair $\underline{F}:\underline{\E_1}\rightleftarrows \underline{\E_2}: \underline{H}$ of exact functors  between the stable categories. Moreover, if $F$ is separable then so is $\underline{F}$. 
	\end{lem}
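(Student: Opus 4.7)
The plan is to first produce the exact functors $\underline{F}$ and $\underline{H}$ by appealing to Lemma~\ref{partial exact}. Since $F\colon \E_1\ra \E_2$ is exact, post-composing with the canonical projection $\E_2\ra \underline{\E_2}$ makes $F$ into a $\partial$-functor into the pretriangulated category $\underline{\E_2}$. The hypothesis that $F$ preserves injectives ensures that $F(\Inj\E_1)\subset \Inj\E_2$, so this $\partial$-functor vanishes on $\Inj\E_1$ (up to morphisms through injectives of $\E_2$) and hence descends to an exact functor $\underline{F}\colon \underline{\E_1}\ra \underline{\E_2}$ by Lemma~\ref{partial exact}. The same argument applied to $H$ yields $\underline{H}$.

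For the adjoint pair structure, I would show that the bijection $\hom_{\E_2}(FX,Y)\cong \hom_{\E_1}(X,HY)$ descends to the quotients. The crux is to verify that the ideals $\I(FX,Y)$ and $\I(X,HY)$ of morphisms factoring through injectives correspond under this isomorphism. Recall that the transpose of $\phi\colon FX\ra Y$ is explicitly $H\phi\circ \eta_X\colon X\ra HFX\ra HY$. Thus a factorization $FX\ra I\ra Y$ with $I\in \Inj\E_2$ produces a factorization $X\ra HI\ra HY$, and $HI\in \Inj\E_1$ since $H$ preserves injectives. Symmetrically, if $g\colon X\ra HY$ factors as $X\ra J\ra HY$ with $J\in \Inj\E_1$, then its adjoint $FX\ra Y$ factors as $FX\ra FJ\ra FHY\ra Y$ through $FJ\in \Inj\E_2$ since $F$ preserves injectives. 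This gives the bijection on the quotients, and the naturality and triangle identities for $\underline{\eta}$ and $\underline{\epsilon}$ (the induced unit and counit) are inherited from those of $\eta$ and $\epsilon$ by passing to the stable categories.

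Finally, for separability, if $F$ is separable then by Lemma~\ref{adjoint separable} there is a natural transformation $\xi\colon HF\ra \id_{\E_1}$ with $\xi\eta=\id$. Passing to the stable categories, $\xi$ descends (since $HF$ and $\id_{\E_1}$ are defined on $\underline{\E_1}$ via the quotient) to $\underline{\xi}\colon \underline{H}\,\underline{F}\ra \id_{\underline{\E_1}}$, and the relation $\underline{\xi}\circ \underline{\eta}=\id$ follows. Applying Lemma~\ref{adjoint separable} to the adjoint pair $(\underline{F},\underline{H})$ then gives that $\underline{F}$ is separable.

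The main obstacle I anticipate is the compatibility check that $\I(FX,Y)$ and $\I(X,HY)$ match under the adjunction, since this is where both hypotheses (that $F$ and $H$ preserve injectives) are genuinely used; everything else is essentially formal descent through the quotient. A minor technical subtlety is to verify that $\underline{F}$ and $\underline{H}$ are well-defined on morphisms in a manner compatible with the translation functor, but this is automatic from Lemma~\ref{partial exact}.
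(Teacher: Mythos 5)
Your argument is correct and is precisely the standard proof the paper intends (the paper explicitly leaves this lemma to the reader): descent of $F$ and $H$ via Lemma~\ref{partial exact}, the check that the stable ideals $\I(FX,Y)$ and $\I(X,HY)$ correspond under the adjunction because $F$ and $H$ preserve injectives (equivalently, that the unit and counit pass to the quotients and still satisfy the triangle identities), and descent of the retraction $\xi\colon HF\ra \id$ from Lemma~\ref{adjoint separable} for separability. No gaps.
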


\begin{proof}[Proof of Theorem~\ref{stable}]

It's easy to see that an admissible $G$-action on $\E$ induces an admissible $G$-action on $\underline{\E}$. 
Moreover, the canonical functor $H\colon \E\ra \underline{\E}$ is  $G$-equivariant and  the induced functor $H^G\colon \E^G\ra \underline{\E}^G$ is a  $\partial$-functor by Lemma~\ref{partial}.
	Since $\Inj\E^G\subset \ker H^G$, $H^G$ induces an exact functor $\underline{H^G}\colon \underline{\E^G}\ra \underline{\E}^G$ by Lemma~\ref{partial exact}. We claim that $\underline{H^G}$ is an equivalence up to retracts. 
Indeed, the adjoint pair $\omega: \E^G\rightleftarrows \E: \ind$ induces an adjoint pair $\underline{\omega}: \underline{\E^G}\rightleftarrows \underline{\E}: \underline{\ind}$ between the stable categories, which in turn induces an adjoint pair \[\xymatrix{\widehat{\underline{\omega}}: \widehat{\underline{\E^G}} \ar@<0.5ex>[r] & \ar@<0.5ex>[l] \widehat{\underline{\E}}: \widehat{\underline{\ind}}}\] between the idempotent-completions. 
This gives us a  comonad  \[T_1=(\underline{\widehat{\omega}}\circ \underline{\widehat{\ind}}, \epsilon_1, \Delta_1)\] on $\underline{\widehat{\E}}$. 
		Moreover, the adjoint pair $\omega: \underline{\E}^G\rightleftarrows \underline{\E}: \ind$ induces an adjoint pair \[\xymatrix{\widehat{\omega}: \widehat{\underline{\E}^G} \ar@<0.5ex>[r] & \ar@<0.5ex>[l] \widehat{\underline{\E}}: \widehat{\ind}}\] between the idempotent-completions, 
		which gives us a comonad \[T_2=(\widehat{\omega}\circ \widehat{\ind}, \epsilon_2, \Delta_2).\]  We have $T_1= T_2$ and the composition  \[\widehat{\underline{\E^G}}\simeq \widehat{\underline{\E}}_{T_1}= \widehat{\underline{\E}}_{T_2}\simeq \widehat{\underline{\E}^G}\] gives us an equivalence which is isomorphic to the exact functor 
		\[\widehat{\underline{H^G}}\colon \widehat{\underline{\E^G}}\lra \widehat{\underline{\E}^G}\] induced by $\underline{H^G}: \underline{\E^G}\ra \underline{\E}^G$. So $\underline{H^G}$ is an equivalence up to retracts, as claimed. Moreover, as argued in Theorem~\ref{localize}, the canonical pretriangulated structure on $\underline{\E}^G$ is actually a triangulated structure. This finishes our proof.

\end{proof}

\begin{rmk}
	Sonsa~\cite{Sonsa} proposed a solution to a \emph{triangulated} equivariant category as follows. Suppose $\B$ is a pretriangulated DG category with a $G$-action and $\T$ the homotopy category $H^0(\B)$ of $\B$. By definition, $\T^G_\B$ is the homotopy category $H^0((\B^G)^{\textup{pretr}})$ of the pretriangulated hull $(\B^G)^{\textup{pretr}}$ of the DG category $\B^G$ (see \cite[Definition 3.9]{Sonsa}). His approach does not directly deal with the problem whether there is a triangulated structure on $\T^G$ (in our notation, not in Sonsa's notation). Moreover, in general, we don't know whether there is an equivalence $\T^G_\B\ra \T^G$. 
	For example, if $\A$  is an additive category with a $G$-action and  $\B=\C_{dg}(\A)$ is the DG category of differential complexes over $\A$ equipped with the induced $G$-action then $\K(\A)_\B^G$ is triangle isomorphic to $\K(\A^G)$. But from   Exmaple~\ref{exm: homotopy}, we know that generally we only have an equivalence up to retracts $\K(\A^G)\ra \K(\A)^G$. 
\end{rmk}

\subsection{Examples of canonically triangulated categories of equivariant objects} \label{sec: exm}
We shall apply the previous two propositions to  homotopy categories, derived categories and singularity categories.  

Let $*\in \{\emptyset, +, -, b\}$. For an additive category $\A$, we denote by $\C^*(\A)$ the category of arbitrary (resp. bounded below,  bounded above,  bounded) differential complexes over $\A$ if $*=\emptyset$ (resp. $+$,  $-$,  $b$) and  by $\K^*(\B)$ the corresponding homotopy category.

\begin{exm}\label{exm: homotopy}
	Let $\A$ be an additive category with a $G$-action and suppose $|G|$ is invertible in $\A$. Then $\K^*(\A)$ carries an induced admissible $G$-action, $\K^*(\A)^G$ admits a canonical triangulated structure and we have an exact equivalence up to retracts \[\K^*(\A^G)\lra \K^*(\A)^G,\]
which is an exact equivalence if $\K^*(\A^G)$ is idempotent-complete. 
	
Indeed, recall that a pair $X^\bullet\overset{u}{\ra} Y^\bullet\overset{v}{\ra} Z^\bullet$ of composable morphisms in $\C^*(\A)$ is called \emph{termwise split} if each term $X^n\overset{u^n}{\ra} Y^n\overset{v^n}{\ra} Z^n$ is a split short exact sequence. It's well-known (see \cite{Kel}) that the set of all such termwise split pairs defines an exact structure on $\C^*(\A)$,  $\C^*(\A)$ is then a Frobenius category whose projective objects are precisely those null-homotopic complexes and $\K^*(\A)$ is the corresponding stable category of $\C^*(\A)$. 
   The $G$-action  on $\A$ induces an admissible $G$-action on the Frobenius category $\C^*(\A)$. 
 There is an obvious isomorphism $\C^*(\A^G)\cong \C^*(\A)^G$ of exact categories. So there is an exact isomorphism \[\K^*(\A^G)=\underline{\C^*(\A^G)}\cong \underline{\C^*(\A)^G}.\]
 By Theorem~\ref{stable}, $\K^*(\A)^G$ admits a canonical triangulated structure and there is an exact equivalence up to retracts $\underline{\C^*(\A)^G}\ra \K^*(\A)^G$. Hence we have an exact equivalence up to retracts \[\K^*(\A^G)\lra \K^*(\A)^G.\] 
 
 For example, if $\A$ is idempotent-complete, in which case $\K^b(\A^G)$ is idempotent-complete by \cite[Theorem 2.9]{IY}, we have an exact equivalence \[\K^b(\A^G)\overset{\simeq}{\lra} \K^b(\A)^G.\] In particular, we have an exact equivalence \[\K^b(\Proj \B^G)=\K^b((\Proj \B)^G)\overset{\simeq}{\lra} \K^b(\Proj \B)^G\] provided that $G$ acts on  an idempotent-complete exact category $\B$  with $|G|$  invertible in $\B$.
\end{exm}

For an abelian category $\A$, we denote by $\D^*(\A):=\K^*(\A)/\K^*_{ac}(\A)$ the corresponding derived category of $\A$, where $\K_{ac}^*(\A)$ is the full subcategory of $\K^*(\A)$ consisting of acyclic complexes. 
\begin{exm}\label{der}

 Let $G$ act on an abelian category $\A$ with $|G|$ invertible in $\A$.  Then $\D^*(\A)$ carries an induced admissible $G$-action,  $\D^*(\A)^G$ admits a canonical triangulated structure
 and there is an exact equivalence up to retracts \[\D^*(\A^G)\lra \D^*(\A)^G,\] which is an exact equivalence if $\D^*(\A^G)$ is idempotent-complete.  

 Indeed, we can identify $\K^*(\A^G)$ canonically as a strictly full triangulated subcategory of $\K^*(\A)^G$ by Example~\ref{exm: homotopy}  and meanwhile $\K^*_{ac}(\A^G)$ can be identified with a strictly full triangulated subcategory of $\K^*_{ac}(\A)^G$. 
 Note that  $\ind\circ \omega(U)\in\K^*_{ac}(\A)^G$ actually lies in $\K^*_{ac}(\A^G)$ for $U\in \K^*_{ac}(\A)^G$ and that each morphism $f\colon X\ra Y$ in $\K^*(\A)^G$ factors through $\ind\circ \omega(Y)\in \K^*_{ac}(\A^G)$, where $X\in \K^*(\A^G)$ and $Y\in \K^*_{ac}(\A)^G$.  Thus the induced exact functor \[F\colon \K^*(\A^G)/\K^*_{ac}(\A^G)\lra \K^*(\A)^G/\K^*_{ac}(\A)^G\] is fully faithful by \cite[\S 2, Th\'eor\`eme 4-2]{verdier}, which is moreover dense up to retracts.
 By Theorem~\ref{localize}, we have an exact equivalence up to retracts 
 \[H\colon \K^*(\A)^G/\K^*_{ac}(\A)^G\lra (\K^*(\A)/\K^*_{ac}(\A))^G.\]
 The composition  yields an exact equivalence up to retracts \[HF\colon \D^*(\A^G)\lra \D^*(\A)^G.\] 

 For example, since $\D^b(\A^G)$ is idempotent-complete by \cite[Corollary 2.10]{BS} or \cite[Theorem]{LC},  we have an exact equivalence $\D^b(\A^G)\simeq \D^b(\A)^G$. For another example,  if $\A$ satifies AB $4)$ (resp. AB $4^*)$)\footnote{Recall that AB $4)$ means that $\A$ has arbitrary direct coproducts and direct coproducts are exact. AB $4^*)$ is the dual condition on $\A$.} in the sense of Grothendieck~\cite{Tohoku}, 
 then we have an exact equivalence $\D(\A^G)\simeq \D(\A)^G$. Indeed, in this case, $\A^G$ also satisfies AB $4)$ (resp. AB $4^*)$). 
 Then  $\D(\A^G)$ has arbitrary direct coproducts (resp. direct products) by \cite[Lemma 1.5]{BoNe} and thus is idempotent-complete by \cite[Proposition 1.6.8, Remark 1.6.9]{Neeman}.

That there is an equivalence $\D^b(\A^G)\simeq \D^b(\A)^G$ is already shown in \cite{XWC, E} (the equivalence is moreover shown to be an exact equivalence in \cite{XWC}). And the result for idempotent-complete unbounded derived categories is also mentioned in \cite[Remark 4.2]{XWC}. 
\end{exm}

\begin{exm}

	Let $\A$ be an abelian category with enough projectives. Recall that  an object $M\in \A$ is called a  \emph{Gorenstein projective} object if there is an exact sequence of projectives
	\[P^\bullet\colon \quad \dots \lra P^{-1}\overset{d^{-1}}{\lra} P^0\overset{d^0}{\lra} P^1\lra \dots\]  such that $\hom_{\A}(P^\bullet, \Proj \A)$ are exact and $M\cong \im d^{-1}$. 
The full subcategory  $\GP(\A)$ of $\A$ consisting of Gorenstein-projectives is a Frobenius category whose exact structure is given by those short exact sequences in $\A$ with terms lying in $\GP(\A)$ and we have $\Proj \GP(\A)=\Proj\A$.  Denote
\[\D^b_{\sg}(\A):=\D^b(\A)/\K^b(\Proj \A). \] This yields  Orlov's triangulated categories of singularities  \cite{orlov3} for a (two-sided) notherian graded algebra $R$ over a field $k$  when $\A$ is taken to be the category of finitely generated right modules over $R$ and the category of finitely generated graded right modules over $R$.
A generalization of Buchweitz's theorem \cite[Theorem 4.4.1]{buch} and Happel's theorem \cite[Theorem 4.6]{happel2}  
states that  the exact functor \[F_\A\colon \underline{\GP(\A)}\lra \D^b_{\sg}(\A),\] induced by the canonical inclusions $\GP(\A)\monic \A\monic \D^b(\A)$, is fully faithful, and $F_\A$ is an equivalence if each object in $\A$ has finite Gorenstein projective dimension. 
For the above statements,  one can see Zhang's book \cite{zhang}, Chapter 8 if one is able to read Chinese. 
The most general form of Buchweitz's theorem and Happel's theorem is due to Chen \cite{chen2}, who introduced relative singularity categories as a  generalization of Orlov's singularity categories for a noetherian graded algebra. 

Now suppose $\A$ has a $G$-action and $|G|$ is invertible in $\A$.  $\GP(\A)$ is obviously $G$-invariant and is equipped with an admissible $G$-action.  Thus $\underline{\GP(\A)}$ is equipped with an admissible $G$-action by Proposition~\ref{stable}. It's also evident that $F_\A$ is $G$-equivariant and thus we have an induced  functor \[F_\A^G\colon \underline{\GP(\A)}^G\lra \D^b_{\sg}(\A)^G.\]
We have $\Proj \A^G=(\Proj \A)^G$ by Lemma~\ref{equiv exact} and one readily  sees $\GP(\A^G)=\GP(\A)^G$, where one needs to use the fact that $\GP(\A)$ is closed under direct summands to conclude $\GP(\A)^G\subset \GP(\A^G)$. 
Moreover, Since the composition of the canonical inclusions $\GP(\A)\monic \A\monic \D^b(\A)$ is a  $G$-equivariant $\partial$-functor, the induced functor $\GP(\A)^G\ra \D^b(\A)^G$
as a $\partial$-functor (by Lemma~\ref{partial})  induces an exact functor \[H\colon \underline{\GP(\A)^G}\lra \D^b(\A)^G/\K^b(\Proj \A)^G.\]
 Then one sees that there is a commutative  diagram 
 \[\xymatrix{\underline{\GP(\A^G)}\ar@{=}[r] \ar[d]^{F_{\A^G}} & \underline{\GP(\A)^G}\ar[r] \ar[d]^H & \underline{\GP(\A)}^G\ar[d]^{F_\A^G} \\
 \D^b_{\sg}(\A^G)\ar[r]^<<<<{\simeq} & \D^b(\A)^G/\K^b(\Proj \A)^G\ar[r] & \D^b_{\sg}(\A)^G,}\] where the existence of the left lower horizontal equivalence follows from Example~\ref{exm: homotopy} and Example~\ref{der},  the right upper resp. lower horizontal arrow represents the functor asserted in Theorem~\ref{stable} resp. Theorem~\ref{localize}. One can  obtain a similar result involving relative singularity categories by starting from the setup of \cite{chen2}.

 In particular, consider a positively graded  Iwanaga-Gorenstein ring $R=\oplus_{i\in \NN}R_i$, that is, a positively graded  notherian  ring $R$ such that $R$ has finite injective dimension as a graded left  $R$-module and as a graded right  $R$-module.
 Suppose $R$ has a right $G$-action preserving degree and $|G|$ is invertible in $R$. Clearly the skew group ring $RG$ is  also a positively graded  Iwanaga-Gorenstein ring. 
 Denote by $\modZ R$ the category of finitely generated graded right $R$-modules with homogeneous maps of degree zero and by  $\MCM^\ZZ R$ resp. $\projZ R$ its full subcategory consisting of  Gorenstein-projective resp. projective  modules.
 By adapting classical arguments for a usual(=ungraded) Iwanaga-Gorenstein ring, one can show that \[\MCM^\ZZ R=\{M\in \modZ R \mid \ext^i_{\modZ R}(M, \projZ R)=0, \forall i>0\}\] and then that each object in $\modZ R$ has a finite Gorenstein projective dimension.
 Denote \[\D_{\sg}^{\gr}(R)=\D^b(\modZ R)/\K^b(\projZ R).\]
 Then there is a commutative  diagram 
 \[\xymatrix{\underline{\MCM^{\ZZ} RG} \ar[d]^{\simeq} \ar[r] & \underline{\MCM^\ZZ R}^G \ar[d]^{\simeq} \\
 \D_{\sg}^{\gr}(RG)\ar[r]  & \D_{\sg}^{\gr}(R)^G.}\]
 The horizontal arrows are exact equivalences if  additionally $R_0$ has finite global dimension. Indeed, in this case, $R_0G$ has finite global dimension and by the arguments in \cite[\S5]{BurSte}, 
 $\D^b(\modZ RG)$ admits a weak semi-orthogonal decomposition  $\pair{\D_1, \D_2, \D_3}$   with an exact equivalence $\D_2\simeq \D^{\gr}_{\sg}(RG)$. It follows that $\D_{\sg}^{\gr}(RG)$ is idempotent-complete. 
 See \S\ref{sec: t-str} for the definition of a weak semi-orthogonal decomposition. 
\end{exm}

In the remaining of this section, we will work with triangulated categories $\D$ satisfying the following
\[\begin{aligned}
		\text{\bf (Hypothesis)} &  \quad \text{$G$ acts admissibly on $\D$ with $|G|$ invertible in $\D$ and}\\
	& \quad\text{$\D^G$ admits a canonical triangulated structure.}
\end{aligned}\]

\subsection{T-structure and semi-orthogonal decomposition}\label{sec: t-str}
T-structures are introduced by Beilinson, Bernstein and Deligne  in \cite{BBD}. 
\begin{defn}
	A \emph{t-structure}  on a triangulated category  $\D$ is a pair $(\D^{\leq 0}, \D^{\geq 0})$ of strictly full subcategories of $\D$  satisfying the following axioms: (denoting $\D^{\leq n}=\D^{\leq 0}[-n], \D^{\geq n}=\D^{\geq 0}[-n]$)  
	\begin{enumi}
	\item $\D^{\leq 0}$ (resp. $\D^{\geq 0}$) is closed under suspension (resp. desuspension), 
	\item $\hom_\D(\D^{\leq 0}, \D^{\geq 1})=0$,
	\item each object $A$ in $\D$ fits into an exact triangle $X\ra A\ra Y\ra X[1] $ with $X\in \D^{\leq 0}, Y\in \D^{\geq 1}$.
	\end{enumi}

	A t-structure $(\D^{\leq 0}, \D^{\geq 0})$ is called \emph{nondegenerate} if $\cap_{n\in \ZZ} \D^{\leq n}=0=\cap_{n\in\ZZ}\D^{\geq n}$ and is called \emph{bounded} if each $X\in \D$ lies in some $\D^{[m,n]}:=\D^{\geq m}\cap \D^{\leq n}$. 
	A t-structure $(\D^{\leq 0}, \D^{\geq 0})$  is called \emph{a stable t-structure} if $\D^{\leq 0}$ (equivalently, $\D^{\geq 0}$) is a triangulated subcategory of $\D$. 
\end{defn}
	
\begin{prop}\label{t-str}
	Suppose $\D$ is a triangulated category satisfying {\bf (Hypothesis)}. 
	Let $(\D^{\leq 0},\D^{\geq 0})$ be a $G$-invariant t-structure on $\D$, i.e., a t-structure such that $\D^{\leq 0}$ is $G$-invariant.  
	Then $((\D^{\leq 0})^G, (\D^{\geq 0})^G)$ is a  t-structure on $\D^G$.
 If  $(\D^{\leq 0},\D^{\geq 0})$ is nondegenerate (resp.  bounded)   then so is $((\D^{\leq 0})^G, (\D^{\geq 0})^G)$. 
If  $(\D^{\leq 0},\D^{\geq 0})$  is a stable t-structure then we have a stable t-structure $((\D^{\leq 0})^G, (\D^{\geq 0})^G)$.
\end{prop}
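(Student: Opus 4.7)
My plan is to identify $(\D^{\leq 0})^G$ and $(\D^{\geq 0})^G$ with the strictly full subcategories $\omega^{-1}(\D^{\leq 0})$ and $\omega^{-1}(\D^{\geq 0})$ of $\D^G$, and then check the three t-structure axioms. This identification is legitimate: $\D^{\leq 0}$ is $G$-invariant by hypothesis, and $\D^{\geq 1}=\{X\in\D\mid \hom_\D(\D^{\leq 0},X)=0\}$ is then automatically $G$-invariant, because for $X\in\D^{\geq 1}$, $Y\in\D^{\leq 0}$, the adjoint pair $(F_g,F_{g^{-1}})$ gives $\hom_\D(Y,F_gX)\cong \hom_\D(F_{g^{-1}}Y,X)=0$. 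Axioms (i) and (ii) are then immediate: $\omega$ commutes with $[1]$ by definition of the canonical triangulated structure, so both subcategories are closed under the required (de)suspensions; for $X\in(\D^{\leq 0})^G$ and $Y\in(\D^{\geq 1})^G$, faithfulness of $\omega$ (Lemma~\ref{adjoint}(1)) gives $\hom_{\D^G}(X,Y)\hookrightarrow \hom_\D(\omega X,\omega Y)=0$.

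For axiom (iii) I would transport truncation from $\D$ to $\D^G$. The inclusion $\iota_{\leq 0}\colon\D^{\leq 0}\hookrightarrow\D$ is tautologically $G$-equivariant and admits $\tau_{\leq 0}$ as right adjoint. By Lemma~\ref{equiv adjoint}, $\tau_{\leq 0}$ carries a canonical $G$-equivariant structure, the counit $\epsilon\colon \iota_{\leq 0}\tau_{\leq 0}\to \id_\D$ is $G$-equivariant, and $\tau_{\leq 0}$ induces an additive functor $\tau_{\leq 0}^G\colon\D^G\to(\D^{\leq 0})^G$ satisfying $\omega\tau_{\leq 0}^G=\tau_{\leq 0}\omega$ together with a natural transformation $\epsilon^G\colon\tau_{\leq 0}^G\to\id_{\D^G}$ whose image under $\omega$ at any $A\in\D^G$ is the classical truncation map $\tau_{\leq 0}(\omega A)\to\omega A$. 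For each $A\in\D^G$ I would then apply (TR1) for the canonical triangulated structure on $\D^G$ to embed $\epsilon^G_A$ into an exact triangle
\[\tau_{\leq 0}^G(A)\lra A\lra B\lra \tau_{\leq 0}^G(A)[1]\]
in $\D^G$. By Corollary~\ref{uniquetri}, applying $\omega$ produces an exact triangle in $\D$ beginning with the truncation map of $\omega A$; uniqueness of truncations in $\D$ forces $\omega B\cong \tau_{\geq 1}(\omega A)\in\D^{\geq 1}$, hence $B\in(\D^{\geq 1})^G$. This is exactly the truncation triangle of $A$ required by (iii).

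The auxiliary statements should then fall out easily. Nondegeneracy: any $X\in\cap_n(\D^{\leq n})^G$ has $\omega X\in\cap_n\D^{\leq n}=0$, and conservativity of $\omega$ (Lemma~\ref{adjoint}(1)) forces $X\cong 0$ in $\D^G$; the $\D^{\geq n}$ side is dual. Boundedness is transparent: $A\in(\D^{\geq m})^G\cap(\D^{\leq n})^G$ iff $\omega A\in\D^{\geq m}\cap\D^{\leq n}$. Finally, if the original t-structure is stable, then $\D^{\leq 0}$ is closed under $[-1]$ and under cones, so $\omega^{-1}(\D^{\leq 0})$ inherits both properties ($\omega$ commutes with $[-1]$ and detects exact triangles by Corollary~\ref{uniquetri}), making it a triangulated subcategory of $\D^G$. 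The step I anticipate as the main bookkeeping obstacle is axiom (iii): one has to make sure that the equivariant structure produced on $\tau_{\leq 0}^G(A)$ by Lemma~\ref{equiv adjoint} is precisely the one for which the counit is equivariant, so that $\omega$ of the triangle yielded by (TR1) really is the standard truncation triangle of $\omega A$. This compatibility is exactly what Lemma~\ref{equiv adjoint} guarantees, and combined with the uniqueness of truncation triangles in $\D$, it closes the argument.
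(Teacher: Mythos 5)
Your proof is correct, and for the only nontrivial axiom --- the existence of truncation triangles --- it takes a genuinely different route from the paper. The paper argues object by object: it takes a truncation triangle $E_1\to E\to E_2\to E_1[1]$ of the underlying object of $E\in\D^G$ in $\D$, uses the vanishing of $\hom^{-1}(F_g(E_1),E_2)$ and $\hom(F_g(E_1),E_2)$ together with \cite[Proposition 1.1.9]{BBD} to produce unique fillers $\lambda_g^{E_1}$, $\lambda_g^{E_2}$ extending $\lambda_g^E$ to a morphism of triangles, and then verifies the cocycle condition for these fillers by hand via that same uniqueness; the maps $u,v,w$ are then automatically morphisms in $\D^G$ and the triangle is exact there since $\omega$ detects exactness. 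You instead apply Lemma~\ref{equiv adjoint} to the $G$-equivariant inclusion $\D^{\leq 0}\hookrightarrow\D$ to promote $\tau_{\leq 0}$ and its counit to the equivariant level, complete $\epsilon^G_A$ to an exact triangle in $\D^G$ by (TR1), and identify the cone after applying $\omega$; since $(\D^{\geq 1})^G=\omega^{-1}(\D^{\geq 1})$ and $\D^{\geq 1}$ is strictly full, membership of the third vertex requires no control over its equivariant structure, which neatly disposes of the ``bookkeeping obstacle'' you flag at the end. Your version buys economy --- the cocycle verification is absorbed into Lemma~\ref{equiv adjoint}, which the paper has already proved --- at the cost of invoking the truncation functor as a right adjoint rather than just the truncation triangles; the paper's version is more self-contained at this point and makes the induced equivariant structures on both truncations explicit, which is mildly more informative. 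Your treatment of axioms (i), (ii), the $G$-invariance of $\D^{\geq 0}$, nondegeneracy, boundedness, and the stable case matches the paper's.
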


\begin{proof}
	Let $\{F_g, \delta_{g,h}\}$ be the admissible $G$-action on $\D$ and let $\theta_g\colon F_g\circ [1]\ra [1]\circ F_g$ be the commutating isomorphism for $F_g$.
	It's well-known that \[ \D^{\geq 1}=\{X\in \D\mid \hom(\D^{\leq 0},X)=0\}, \quad \D^{\leq 0}=\{X\in \D\mid \hom(X, \D^{\geq 1})=0\}.\]
So $\D^{\leq 0}$ is $G$-invariant iff $\D^{\geq 0}$ is $G$-invariant iff $(F_g(\D^{\leq 0}), F_g(\D^{\geq 0}))=(\D^{\leq 0},\D^{\geq 0})$ for all $g\in G$. In this case, $\D^{\geq m}$, $\D^{\leq n}$ and $\D^{[m,n]}$ are $G$-invariant and we have $(\D^{\geq m})^G[i]=(\D^{\geq m-i})^G, (\D^{\leq n})^G[i]=(\D^{\leq n-i})^G$. 
	
It's easy to see that $(\D^{\leq 0})^G$ (resp. $(\D^{\geq 0})^G$) is closed under suspension (resp. desuspension) and that \[\hom_{\D^G}((\D^{\leq 0})^G, (\D^{\geq 1})^G)=0.\] It remains to prove that each $X\in \D^G$ fits into an exact triangle $A\ra X\ra B\ra A[1]$ with $A\in (\D^{\leq 0})^G, B\in (\D^{\geq 1})^G$.
Let $(E,\{\lambda_g^E\})\in \D^G$ and \[ E_1\overset{u}{\lra} E\overset{v}{\lra} E_2\overset{w}{\lra} E_1[1]\] be an exact triangle with  $E_1\in \D^{\leq 0}, E_2\in \D^{\geq 1}$.
	We will show that the equivariant structure on $E$ induces equivariant structures on $E_1$ and $E_2$. 
	Since $(\D^{\leq 0},\D^{\geq 0})$ is $G$-invariant, for each $g\in G$, we have $F_g(E_1)\in \D^{\leq 0}$ and thus 
	\[\hom_\D^{-1}(F_g(E_1),E_2)=0=\hom_\D(F_g(E_1), E_2).\] Then by \cite[Proposition 1.1.9]{BBD}, there exist unique  \[\lambda^{E_1}_g\colon F_g(E_1)\lra E_1\quad\text{and}\quad \lambda^{E_2}_g\colon F_g(E_2)\lra E_2\] producing a morphism of exact triangles 
	\[\xymatrix{F_g(E_1)\ar[r]^{F_g(u)}\ar@{-->}[d]^{\lambda_g^{E_1}} & F_g(E)\ar[r]^{F_g(v)}\ar[d]^{\lambda_g^E} & F_g(E_2)\ar[r]^{\theta_{g,E_1}\circ F_g(w)}\ar@{-->}[d]^{\lambda_g^{E_2}} & F_g(E_1)[1]\ar@{-->}[d]^{\lambda_g^{E_1}[1]}\\
	E_1\ar[r]^{u} & E \ar[r]^v & E_2 \ar[r]^w & E_1[1].}
\]
	Moreover, for $g,g'\in G$ we have 
	\[
		\begin{aligned}
			\lambda_{g'g}^E\circ F_{g'g}(u) & = \lambda_g^E\circ F_g(\lambda_{g'}^E)\circ \delta_{g,g',E}^{-1}\circ F_{g'g}(u)\\
			& = \lambda_g^E\circ F_g(\lambda_{g'}^E)\circ F_g F_{g'}(u)\circ \delta_{g,g',E_1}^{-1}\\
			& = \lambda_g^E\circ F_g(u)\circ F_g(\lambda_{g'}^{E_1})\circ \delta_{g,g',E_1}^{-1}\\
			& = u\circ \lambda_g^{E_1}\circ F_g(\lambda_{g'}^{E_1})\circ \delta_{g,g',E_1}^{-1},
		\end{aligned}
		\]
		By the uniqueness of $\lambda_{g'g}^{E_1}$, we have \[\lambda_{g'g}^{E_1}=\lambda_g^{E_1}\circ F_g(\lambda_{g'}^{E_1})\circ \delta_{g,g',E_1}^{-1}.\]
		Hence $(E_1,\{\lambda_{g}^{E_1}\})\in (\D^{\leq 0})^G$. Similarly, one sees  
		$(E_2, \{\lambda_g^{E_2}\})\in (\D^{\geq 1})^G$.   Then the exact triangle in $\D^G$
		\[(E_1,\{\lambda_{g}^{E_1}\}) \overset{u}{\lra} (E,\{\lambda_{g}^{E}\}) \overset{v}{\lra} (E_2,\{\lambda_{g}^{E_2}\}) \overset{w}{\lra} (E_1,\{\lambda_{g}^{E_1}\})[1]\]
allows us to conclude that $((\D^{\leq 0})^G,(\D^{\geq 0})^G)$ is a t-structure in $\D^G$.

The assertion on nondegeneracy and boundedness is obvious. If $\D^{\leq 0}$ is a triangulated subcategory of $\D$ then $(\D^{\leq 0})^G$ admits a unique triangulated structure, which  actually coincides with the canonical triangulated structure, such that the inclusion $(\D^{\leq 0})^G\monic \D^G$ is exact. 
Thus we have a stable t-structure $((\D^{\leq 0})^G, (\D^{\geq 0})^G)$. 

\end{proof}

Let $\D$ be a triangulated category. For two full subcategories $\X,\Y$ of $\D$, denote \[\X*\Y=\{A\mid \exists\, \text{exact triangle $X\ra A\ra Y\ra X[1]$ in $\D$ with $X\in\X$, $Y\in \Y$}\}.\] By the octahedron axiom, $*$ is associative. An \emph{admissible subcategory}~\cite{bondal} of $\D$ is a triangulated subcategory $\C$ such that the inclusion $\C\monic \D$ admits a left and a right adjoint. We continue to recall the definition of a (weak) semi-orthogonal decomposition in the  sense of Bondal and Kapranov \cite{BK} and Orlov \cite{orlov3}. 

\begin{defn}
	An $n$-tuple $\pair{\D_1,\dots, \D_n}$ of strictly full triangulated subcategories of  $\D$ is called a \emph{weak semi-orthogonal decomposition} if $(\D_1*\dots*\D_i, \D_{i+1}*\dots*\D_n)$ is a stable t-structure on $\D$ for each $i$ and is called a \emph{semi-orthogonal decomposition} if it is a weak semi-orthogonal decomposition and if additionally  each $\D_i$ is an admissible subcategory of $\D$. 
\end{defn}
One readily sees that our definitions above are equivalent to the origininal ones given in~\cite{BK} and~\cite{orlov3}. Here is a generalization of  \cite[Theorem 3.2]{KP}.
\begin{prop}
	Let $\D$ be a triangulated category satisfying {\bf (Hypothesis)}. If $\D$ admits a (weak) semi-orthogonal decomposition $\pair{\D_1,\dots, \D_n}$, where each $\D_i$ is $G$-invariant, then $\D^G$ admits a (weak) semi-orthogonal decomposition $\pair{\D_1^G, \dots, \D_n^G}$. 
\end{prop}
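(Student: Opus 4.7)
The plan is to reduce to the case $n=2$ by induction on $n$ and then apply Proposition~\ref{t-str} together with Lemma~\ref{equiv adjoint}. For $n=2$, a weak semi-orthogonal decomposition $\pair{\D_1,\D_2}$ is exactly a stable t-structure $(\D_1,\D_2)$ on $\D$ (with $\D_1$ playing the role of $\D^{\leq 0}$), upgraded in the non-weak case by requiring $\D_1,\D_2$ to be admissible in $\D$. Since each $\D_i$ is $G$-invariant, Proposition~\ref{t-str} directly produces a stable t-structure $(\D_1^G,\D_2^G)$ on $\D^G$, which is precisely the weak semi-orthogonal decomposition $\pair{\D_1^G,\D_2^G}$. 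For the non-weak case I would moreover use that the $G$-invariance of $\D_i$ makes the inclusion $\iota_i\colon\D_i\monic\D$ naturally $G$-equivariant: by Lemma~\ref{equiv adjoint}, the left and right adjoints $L_i$ and $R_i$ of $\iota_i$ are then naturally $G$-equivariant and yield adjoint pairs $(L_i^G,\iota_i^G)$ and $(\iota_i^G,R_i^G)$, whence $\D_i^G$ is admissible in $\D^G$.

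For the induction step, assume the result for $n-1$ and take a (weak) SOD $\pair{\D_1,\dots,\D_n}$ of $\D$ with each $\D_i$ being $G$-invariant. Setting $\D':=\D_2*\cdots*\D_n$, this $\D'$ is a strictly full triangulated subcategory of $\D$, is $G$-invariant since each $F_g$ is exact, and fits into a (weak) SOD $\pair{\D_1,\D'}$ of $\D$. Applying the $n=2$ case gives a (weak) SOD $\pair{\D_1^G,(\D')^G}$ of $\D^G$; in particular $(\D')^G$ is a strictly full triangulated subcategory of $\D^G$, and the forgetful functor $(\D')^G\to\D'$ is the restriction of $\omega\colon\D^G\to\D$ and hence exact. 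Thus the restricted $G$-action on $\D'$ is admissible and $(\D')^G$ admits a canonical triangulated structure, i.e.\ $\D'$ also satisfies {\bf (Hypothesis)}. The inductive hypothesis applied to the (weak) SOD $\pair{\D_2,\dots,\D_n}$ of $\D'$ then yields a (weak) SOD $\pair{\D_2^G,\dots,\D_n^G}$ of $(\D')^G$, and concatenation delivers the required $\pair{\D_1^G,\dots,\D_n^G}$ on $\D^G$.

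I do not foresee a genuine obstacle: the only bookkeeping concern is the transfer of {\bf (Hypothesis)} from $\D$ to $\D'$, which reduces to checking that the triangulated structure on $(\D')^G$ inherited as a subcategory of $\D^G$ is the canonical one on $(\D')^G$. This is forced by the uniqueness assertion of Corollary~\ref{uniquetri} together with the exactness of the restricted forgetful functor $(\D')^G\to\D'$.
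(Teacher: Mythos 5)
Your proof is correct and follows essentially the same route as the paper: the core of both arguments is the $n=2$ case, obtained by applying Proposition~\ref{t-str} to the stable t-structure $(\D_1,\D_2)$ for the weak part and Lemma~\ref{equiv adjoint} to the $G$-equivariant inclusions $\D_i\monic\D$ for admissibility. The only difference is that you spell out the reduction to $n=2$ (via induction on the intermediate subcategory $\D'=\D_2*\cdots*\D_n$ and the transfer of {\bf (Hypothesis)} to $\D'$), which the paper dispatches with the single sentence ``It suffices to prove the assertion for $n=2$.''
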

\begin{proof}
	It suffices to prove the assertion for $n=2$. If $\pair{\D_1,\D_2}$ is a weak semi-orthogonal decomposition of $\D$ then by Proposition~\ref{t-str},  $(\D_1^G,\D_2^G)$ is a stable t-structure on $\D$, in other words, $\pair{\D_1^G, \D_2^G}$ is a weak semi-orthogonal decomposition of $\D$. If moreover $\pair{\D_1, \D_2}$ is a semi-orthogonal decomposition of $\D$ then  by Lemma~\ref{equiv adjoint}, the left resp.  right adjoint of the inclusion $\D_i\monic \D$ induces the left resp. adjoint of the inclusion $\D_i^G\monic \D^G$, and thus $(\D_1^G, \D_2^G)$ is a semi-orthogonal decomposition. 
\end{proof}
\subsection{Exceptional collection and tilting object}\label{sec: tilting}
The following simple observation is needed shortly.  Recall that a collection $\X$ of objects in a triangulated category $\D$ is said to \emph{classically generate} $\D$ if the thick closure of $\X$ coincides with $\D$.
\begin{lem}\label{ind gen}
	Suppose $\D$ is an idempotent-complete triangulated category satisfying {\bf (Hypothesis)}. If $T$ is a classical generator for $\D$ then $\ind(T)$ is a classical generator for $\D^G$.
\end{lem}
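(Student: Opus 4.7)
My plan is to identify the thick closure of $\ind(T)$ inside $\D^G$ with all of $\D^G$ by a two-step reduction: first reduce from an arbitrary equivariant object $X$ to $\ind\omega(X)$ via the splitting of Lemma~\ref{adjoint}(3), then reduce from $\ind\omega(X)$ to objects of the form $\ind(Y)$ with $Y$ in the thick closure of $T$ in $\D$. Let $\mathcal{U}$ denote the thick closure of $\ind(T)$ in $\D^G$.

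The first ingredient is that $\ind$ is an exact functor. By {\bf (Hypothesis)} the forgetful functor $\omega\colon \D^G\to\D$ is exact, and by Lemma~\ref{adjoint}(1) the functor $\ind$ is both left and right adjoint to $\omega$; since an adjoint to an exact functor between triangulated categories is automatically exact, $\ind\colon \D\to\D^G$ is exact. The second ingredient is that $\D^G$ is idempotent-complete by Lemma~\ref{adjoint}(4), so thick subcategories of $\D^G$ are closed under direct summands.

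Now consider
\[
\mathcal{V}=\{Y\in\D \mid \ind(Y)\in\mathcal{U}\}.
\]
Since $\ind$ is exact and $\mathcal{U}$ is thick, $\mathcal{V}$ is a thick subcategory of $\D$, and $T\in\mathcal{V}$ by construction. As $T$ classically generates $\D$, we conclude $\mathcal{V}=\D$. In particular, $\ind\omega(X)\in\mathcal{U}$ for every $X\in\D^G$. On the other hand, by Lemma~\ref{adjoint}(3), $X$ is a direct summand of $\ind\omega(X)$ in $\D^G$, and hence $X\in\mathcal{U}$. This shows $\mathcal{U}=\D^G$, i.e.\ $\ind(T)$ classically generates $\D^G$.

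I do not expect any serious obstacle; the argument is essentially a formal manipulation with the adjoint pair $(\ind,\omega)$, and every needed input (exactness of $\omega$, the splitting of the unit/counit up to the factor $|G|$, idempotent-completeness of $\D^G$) is already recorded in the paper. The only point worth being careful about is to invoke exactness of $\ind$, which follows because $\omega$ is exact and $\ind$ is an adjoint of $\omega$ between triangulated categories.
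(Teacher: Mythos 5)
Your argument is correct and is essentially the paper's own proof written out in full detail: the paper likewise observes that $\ind$ is exact and dense up to direct summands (via Lemma~\ref{adjoint}(3)) and concludes that the thick closure of $\ind(T)$ is all of $\D^G$. Your explicit verification that $\mathcal{V}=\{Y\in\D\mid \ind(Y)\in\mathcal{U}\}$ is a thick subcategory containing $T$ is exactly the step the paper leaves implicit.
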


\begin{proof}
	Since $\D^G$ is idempotent-complete,  the exact functor $\ind\colon \D\ra \D^G$ is dense up to direct summands. It follows that the thick closure of $\ind(T)$ is the whole $\D^G$.
\end{proof}
Recall from \cite{bondal} that a sequence $(E_1,\dots, E_n)$ in a triangulated $k$-category $\D$, where $k$ is a field, is called an \emph{exceptional collection} if \[\left\{\begin{array}{ll} \hom^{\neq 0}_\D(E_i,E_i)=0, \hom_\D(E_i,E_i)=k & \quad\text{for each $i$,}\\
	\hom^l_\D(E_j,E_i)=0 & \quad\text{for $j>i$ and any $l\in\ZZ$.}\end{array}\right.\] It is called \emph{full} if $(E_1,\dots, E_n)$ classically generates $\D$. Generalizing  {\cite[Theorem 2.1]{E2}}, we show that an exceptional collection in $\D$ consisting of equivariant objects can yield an exceptional collection in $\D^G$. 
\begin{prop}\label{exceptional sequence}
	Suppose $\D$ is an idempotent-complete triangulated $k$-category satisfying	{\bf (Hypothesis)}. 
Suppose $\irr(G)=\{\rho_1, \dots, \rho_m\}$.
	If $\D$ admits an exceptional collection $(E_1,\dots, E_n)$, where each $E_i$ is $G$-equivariant, then $\D^G$ admits an exceptional collection
	\[\left(\begin{matrix} 
		E_1\otimes \rho_1 & \dots & E_n\otimes \rho_1\\
		\vdots & \vdots & \vdots\\
		E_1\otimes \rho_m & \dots & E_n\otimes \rho_m
\end{matrix}\right),\]
	where the order between items in  any fixed column can be arbitrary. 
	If $(E_1,\dots, E_n)$ is full then the above exceptional collection is also full.
\end{prop}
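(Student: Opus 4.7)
The strategy is to compute all the requisite Hom-spaces in $\D^G$ in terms of Hom-spaces in $\D$ using Lemma~\ref{tensor hom}, so that the vanishing required by exceptionality follows automatically from the exceptionality of $(E_1,\dots,E_n)$ in $\D$ together with Schur's lemma.

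First I would observe that since each $E_i$ is $G$-equivariant, so is its shift $E_i[l]$. Applying Lemma~\ref{tensor hom} (and then taking $G$-invariants, which is exact since $|G|$ is invertible in $k$) yields, for all indices $i,j$ and $a,b$ and all $l\in\ZZ$, a natural isomorphism
\[
\hom_{\D^G}^l(E_i\otimes \rho_a,\, E_j\otimes \rho_b)\;\cong\;\bigl(\hom_\D^l(E_i,E_j)\otimes \hom_k(\rho_a,\rho_b)\bigr)^G.
\]
From this, the vanishings across different columns fall out immediately: when $i>j$, the right-hand side is zero since $\hom_\D^l(E_i,E_j)=0$ for all $l$ by exceptionality of $(E_1,\dots,E_n)$.

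Next I would analyse the within-column behaviour, which amounts to $i=j$. For $l\neq 0$, again $\hom_\D^l(E_i,E_i)=0$ by exceptionality, so the Hom-space vanishes automatically regardless of $\rho_a,\rho_b$. For $l=0$, the formula reduces to
\[
\hom_{\D^G}(E_i\otimes \rho_a,\,E_i\otimes \rho_b)\;\cong\;\hom_k(\rho_a,\rho_b)^G\;=\;\hom_{kG}(\rho_a,\rho_b),
\]
which by Schur's lemma is $k$ when $\rho_a=\rho_b$ and $0$ when $\rho_a\neq \rho_b$. Consequently, for fixed $i$ the objects $E_i\otimes \rho_1,\dots,E_i\otimes \rho_m$ are pairwise completely Hom-orthogonal, which is why the column ordering can be chosen arbitrarily.

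Finally I would address fullness. Assume $(E_1,\dots,E_n)$ classically generates $\D$. By Example~\ref{equiv skew group}, for a $G$-equivariant object $E_i$ the induction is computed as $\ind(E_i)\cong E_i\otimes kG$, where $kG$ is the regular representation; decomposing $kG\cong \bigoplus_{\rho\in\irr(G)}\rho^{\oplus \dim\rho}$ shows that each $\ind(E_i)$ lies in the thick subcategory generated by $\{E_i\otimes \rho_j\}_{j=1}^m$, and conversely each $E_i\otimes \rho_j$ is a direct summand of $\ind(E_i)$ and thus lies in the thick closure of $\ind(E_i)$ (using idempotent-completeness of $\D^G$ from Lemma~\ref{adjoint}(4)). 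By Lemma~\ref{ind gen}, $\ind(E_1),\dots,\ind(E_n)$ classically generate $\D^G$, so the family $\{E_i\otimes\rho_j\}$ does as well.

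The main technical input is really just the $G$-equivariant enhancement of Lemma~\ref{tensor hom}; everything else is a bookkeeping consequence. The only conceptual subtlety is the appeal to Schur's lemma giving $\End_{kG}(\rho)=k$ for $\rho\in\irr(G)$, which is the standing assumption underlying the notion of exceptional object in this setup.
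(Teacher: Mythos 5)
Your proposal is correct and follows essentially the same route as the paper: the paper's proof invokes Example~\ref{equiv skew group} (which itself rests on Lemma~\ref{tensor hom} and the Schur-type computation $\hom_{\D^G}(E_i\otimes\rho,E_i\otimes\rho')=k$ or $0$) to get the within-column and cross-column vanishings, and then cites Lemma~\ref{ind gen} for fullness exactly as you do. You have merely written out the Hom computations that the paper leaves implicit.
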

\begin{proof}
	In light of Example~\ref{equiv skew group}, we have  \[\add \ind(E_i)=(\add E_i)^G=\add \bigoplus_{\rho_j\in \irr(G)} E_i\otimes \rho_j,\]  where each $E_i\otimes \rho_j$ is exceptional,  and \[\hom_{\D^G}^l(E_r\otimes \rho_i, E_s\otimes \rho_j)=0 \quad \text{for}\,\, r>s, l\in \ZZ\,\,\text{or}\,\,r=s, i\neq j, l\in\ZZ.\]
			Then  the given collection is clearly  an exceptional collection  and  the assertion on fullness follows from Lemma~\ref{ind gen}.
\end{proof}

Recall that an object $T$ in a triangulated category $\D$ is called \emph{tilting} if $\hom^{\neq 0}_\D(T,T)=0$ and $T$ classically generates $\D$. A $G$-invariant tilting object in $\D$ can yield a tilting object in $\D^G$.
\begin{prop}\label{tilting}
	Suppose $\D$ is an idempotent-complete triangulated category satisfying	{\bf (Hypothesis)}.  If $T$ is a $G$-invariant tilting object in $\D$ then $\ind(T)$ is a tilting object in $\D^G$ whose endomorphism ring is Morita equivalent to the skew group ring  $\End_\D(\omega\circ \ind(T))G$. Moreover, we have \[\gldim \End_{\D^G}(\ind(T))=\gldim \End_{\D}(T),\] where $\gldim R$ denotes the right global dimension of a ring $R$. 
\end{prop}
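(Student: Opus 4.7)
The plan is to verify in turn the three separate assertions: that $\ind(T)$ is a tilting object, the Morita equivalence of its endomorphism ring with a skew group ring, and the equality of global dimensions.

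First, I would check the tilting property of $\ind(T)\in\D^G$. Classical generation of $\D^G$ by $\ind(T)$ is immediate from Lemma~\ref{ind gen}. For the vanishing of higher self-extensions, the adjunction $(\ind,\omega)$ combined with $\omega\circ\ind\cong\bigoplus_{g\in G}F_g$ and the $G$-invariance of $T$ yields
\[
\hom^i_{\D^G}(\ind(T),\ind(T))\cong\hom^i_\D(T,\omega\ind(T))\cong\bigoplus_{g\in G}\hom^i_\D(T,F_g(T))\cong\bigoplus_{g\in G}\hom^i_\D(T,T),
\]
which vanishes for $i\neq 0$ because $T$ is tilting in $\D$.

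Second, for the Morita equivalence, I would apply Example~\ref{equiv skew group} to the equivariant object of $\D$ whose underlying $\D$-object is $\omega\ind(T)$ and whose equivariant structure is the canonical one coming from $\ind(T)\in\D^G$. Because $\D$ is idempotent-complete and $|G|$ is invertible in $\D$, the example produces an equivalence $(\add\omega\ind(T))^G\simeq\proj\End_\D(\omega\ind(T))G$. Under this equivalence, $\ind(T)$ corresponds to the natural progenerator of $\proj\End_\D(\omega\ind(T))G$ (the skew group ring viewed as a right module over itself), so $\End_{\D^G}(\ind(T))$ is Morita equivalent to $\End_\D(\omega\ind(T))G$.

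Third, for the global dimensions: by the previous step together with Morita invariance of $\gldim$, one has $\gldim\End_{\D^G}(\ind(T)) = \gldim(\End_\D(\omega\ind(T))G)$. The $G$-invariance of $T$ gives $\omega\ind(T)\cong T^{\oplus|G|}$ in $\D$, so $\End_\D(\omega\ind(T))$ is Morita equivalent to $\End_\D(T)$ and hence $\gldim\End_\D(\omega\ind(T))=\gldim\End_\D(T)$. It therefore suffices to prove $\gldim(RG)=\gldim(R)$ for every ring $R$ equipped with a $G$-action in which $|G|$ is invertible. I would establish this by the standard Maschke-type argument: both $\res\colon\Mod RG\to\Mod R$ and the induction $-\otimes_R RG$ are exact and preserve projectives (as $RG$ is free on either side over $R$), and when $|G|$ is invertible each $M\in\Mod RG$ is a summand of $\ind\res M$ via rescaling the adjunction counit by $\tfrac{1}{|G|}$. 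Consequently the projective dimensions of $M$ over $RG$ and of $\res M$ over $R$ agree, and taking suprema yields $\gldim RG=\gldim R$.

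The main obstacle, such as it is, is the bookkeeping for the Morita equivalence in step two: one must identify precisely which module in $\proj\End_\D(\omega\ind(T))G$ corresponds to $\ind(T)$ under Example~\ref{equiv skew group} and check that it is a progenerator, so that the conclusion is the asserted Morita equivalence with the skew group ring itself rather than a weaker statement about a related subring. Everything else reduces to formal adjunction computations and the classical theory of skew group rings.
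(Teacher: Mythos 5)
Your proposal is correct and follows essentially the same route as the paper: Lemma~\ref{ind gen} for classical generation, the isomorphism $\omega\ind(T)\cong T^{\oplus|G|}$ for the vanishing of higher self-extensions, Example~\ref{equiv skew group} applied to the equivariant object $\omega\ind(T)$ for the Morita equivalence, and Morita invariance together with $\gldim RG=\gldim R$ for the last equality (the paper deduces the Morita statement by passing through $\End_{\D^G}(\ind\omega\ind(T))$, using $\ind\omega\ind(T)\cong\ind(T)^{\oplus|G|}$, rather than tracking $\ind(T)$ through the equivalence as you do). One small correction to your bookkeeping: under the equivalence of Example~\ref{equiv skew group} the object $\ind(T)$ does \emph{not} go to the skew group ring as a module over itself (its underlying $\End_\D(\omega\ind(T))$-module is $\End_\D(\omega\ind(T))$ itself, of rank one rather than $|G|$), but it is indeed a progenerator because $\add\ind(T)=\add\ind(\omega\ind(T))=(\add\omega\ind(T))^G$, which is all the Morita argument needs.
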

\begin{proof}
	Since $T$ is $G$-invariant, we have $\omega\circ\ind(T)\simeq T^{\oplus |G|}$. Since $T$ is a tilting object, \[\hom^{\neq 0}_\D(\omega\circ\ind(T),\omega\circ \ind(T))=0,\] hence \[\hom^{\neq 0}_{\D^G}(\ind(T),\ind(T))=0.\] Combining with Lemma~\ref{ind gen}, we know that $\ind(T)$ is a tilting object. Since \[\ind\circ \omega\circ \ind(T)\simeq \ind(T)^{\oplus |G|},\]  $\End_{\D^G}(\ind(T))$ is Morita equivalent to $\End_{\D^G}(\ind\circ \omega\circ \ind(T))$ and thus is Morita equivalent to $\End_{\D}(\omega\circ\ind(T))G$ (see Example~\ref{equiv skew group}). Then  we have 
\[
	\begin{aligned}
		\gldim \End_{\D^G}(\ind(T)) & = \gldim \End_\D(\omega\circ \ind(T))G\\
		& =\gldim \End_\D(\omega\circ \ind(T))\\
		& =\gldim \End_\D(T).
	\end{aligned}
\]
\end{proof}

\begin{exm}
	Let $\PP^d_k$ be the $d$-dimensional projective space  over an algebraically closed field $k$ and $G$ be a finite subgroup of $PSL_{d+1}(k)\cong \aut_k(\PP^d_k)^{\op}$ with $|G|$ invertible in $k$. It's well-known that $(\O, \dots, \O(d))$ is a full exceptional collection and $T:=\oplus_{i=0}^d\O(i)$ is a tilting bundle in $\D^b(\coh\PP^d_k)$. 
	Since each $\O(i)$ ($i\in\ZZ$) is $G$-invariant, by Proposition~\ref{tilting}, $\D^b(\coh^G\PP^d_k)\simeq \D^b(\coh\PP^d_k)^G$ contains a tilting bundle $\ind^G(T)$. 
It's not obvious how direct summands of $\ind^G(T)$ look like.   Let us consider the natural action of  $\overline{G}$ on $\PP^d_k$, where $\overline{G}$ is the preimage  of $G$ in $SL_{d+1}(k)$. In this case, each $\O(i)$ is $\overline{G}$-equivariant. 
We have \[\ind^{\overline{G}}(T)=\add\bigoplus_{\rho\in\irr(\overline{G}), 0\leq i\leq d}  \O(i)\otimes \rho\] and  \[\O(i)\otimes \rho,\quad 0\leq i\leq d, \rho\in \irr(\overline{G})\] can be ordered to form an exceptional collection by Proposition~\ref{exceptional sequence}. 
	 Then by Proposition~\ref{trivial2}, we see that indecomposable direct summands of $\ind^G(T)$ can be ordered to form an exceptional collection. 

When $d=1$, this example is explored in \cite{kirrilov}. 
\end{exm}

\subsection{Dimension and saturatedness}\label{sec: dim}

Rouquier introduced in \cite{R} the notion of the dimension of a triangulated category. Let us recall its definition.
Suppose $\D$ is a triangulated category.  For a full subcategory $\I$ of $\D$, denote by $\pair{\I}$ the smallest full subcategory of $\D$ containing $\I$ and closed under finite direct sums, direct summands and shifts.
For two full subcategories $\I_1, \I_2$ of $\D$, 
 denote $\I_1\diamond\I_2=\pair{\I_1*\I_2}$. Now put $\pair{\I}_0=0$ and define inductively $\pair{\I}_i=\pair{\I}_{i-1}\diamond \pair{\I}$ for $i\geq 1$. Put $\pair{\I}_{\infty}=\bigcup_{i\geq 0}\pair{\I}_i$.  

\begin{defn}
	The dimension $\dim \D$ of $\D$ is defined as the minimal interger $d\geq 0$ such that there is some $E\in \D$ such that $\pair{E}_{d+1}=\D$. If there is no such $E$ then define $\dim \D=\infty$. 
\end{defn}
The following simple fact is enough for us.

\begin{lem}[{\cite{R}}]\label{dense retract}
Let $\D$ be a triangulated category. If there is an exact functor dense up to direct summands $F\colon \D\ra \D'$  between triangulated categories then $\dim \D'\leq \dim \D$. If $\C$ is a full triangulated subcategory of $\D$ and each object in $\D$ is  a direct summand of some object in $\C$ then $\dim \C=\dim \D$.  
In particular, a triangulated category has the same dimension with its idempotent completion.

\end{lem}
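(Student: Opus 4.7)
The proof rests on a key lemma: for any exact functor $F\colon \D\to\D'$ between triangulated categories and any $E\in\D$, we have $F(\pair{E}_i)\subseteq\pair{F(E)}_i$ for all $i\geq 0$. I would prove this by induction on $i$, using that exact functors commute with cones (hence preserve extensions), with finite direct sums, with shifts, and with direct summands (being additive). Granted this, if $\pair{E}_{d+1}=\D$ then $F(\D)\subseteq\pair{F(E)}_{d+1}$. Since $F$ is dense up to direct summands and $\pair{F(E)}_{d+1}$ is closed under direct summands in $\D'$, this gives $\D'=\pair{F(E)}_{d+1}$, yielding the first assertion.

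For the second assertion, one direction $\dim\D\leq\dim\C$ is immediate from the first part applied to the fully faithful exact inclusion $\iota\colon \C\hookrightarrow\D$, which is dense up to direct summands by hypothesis. For the reverse inequality, take $E\in\D$ with $\pair{E}_{d+1}^\D=\D$ and choose $E'\in\C$ with $E$ a direct summand of $E'$ in $\D$ (possible by density). In $\D$ we then have $\pair{E'}_{d+1}^\D\supseteq\pair{E}_{d+1}^\D=\D$. The delicate point is that the operator $\pair{-}_i$ involves closure under direct summands in the ambient category, so a priori $\pair{E'}_i^\C$ could be strictly smaller than $\pair{E'}_i^\D\cap\C$, and the $\D$-side statement does not immediately transfer.

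To handle this, I would introduce an auxiliary ``summand-free'' closure operator: set $(E)_0=0$, let $(E)_1$ be the closure of $E$ under finite direct sums and shifts, and define $(E)_i$ inductively as the closure of $(E)_{i-1}*(E)_1$ under finite direct sums and shifts, with no direct summand closure at any stage. This operator depends only on the triangulated subcategory generated by $E$ and not on the larger ambient category. By induction on $i$ one verifies that $\pair{E}_i$ equals the direct summand closure of $(E)_i$ in the ambient category; the inductive step relies on the standard device of summing a defining triangle $A\to X\to B\to A[1]$ with trivial triangles built from identities on complementary summands $A''$ and $B''$ (where $A\oplus A''$ and $B\oplus B''$ land in $(E)_{i-1}$ and $(E)_1$ respectively by the outer induction on $A$ and $B$) to exhibit $X$ as a direct summand of an object of $(E)_{i-1}*(E)_1\subseteq(E)_i$. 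With this alternative description in hand, for $E'\in\C$ any $X\in\C$ lying in $\pair{E'}_i^\D$ is a direct summand in $\D$ of some $Y\in(E')_i\subseteq\C$; by full faithfulness of $\C\hookrightarrow\D$ the splitting data are morphisms of $\C$, so $X$ is a direct summand of $Y$ in $\C$ and hence $X\in\pair{E'}_i^\C$. Applied to our situation this yields $\C\subseteq\pair{E'}_{d+1}^\C$ and so $\dim\C\leq d$.

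The final ``in particular'' is immediate: apply the second assertion to the inclusion $\D\hookrightarrow\widehat{\D}$, which is tautologically dense up to direct summands since every object of the idempotent completion is by construction a direct summand of an object of $\D$. The main obstacle is the content of the third paragraph: the ambient-dependence of the direct summand closure in Rouquier's definition of $\pair{-}_i$ must be carefully unwound, and verifying that the summand-free operator $(E)_i$ recaptures $\pair{E}_i$ after a single final summand closure is the technical heart of the argument.
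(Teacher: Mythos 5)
Your proposal is correct. The paper gives no proof of this lemma, citing Rouquier \cite{R}, and your argument is essentially the one found there: the first part via $F(\pair{E}_i)\subseteq\pair{F(E)}_i$ plus closure of $\pair{F(E)}_{d+1}$ under summands, and the second part via the observation that $\pair{E}_i$ is the summand-closure of an intrinsically defined, summand-free operator, established by adding trivial triangles on complementary summands. You have correctly identified and resolved the only genuinely delicate point, namely that the summand-closure in the definition of $\pair{-}_i$ a priori depends on the ambient category.
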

%
%

We use the above fact to show that dimension is invariant through equivariantization.
\begin{prop}\label{dim}
	Let $\D$ be a triangulated category satisfying {\bf (Hypothesis)}. We have $\dim \D^G=\dim \D$.
\end{prop}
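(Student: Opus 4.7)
The plan is to establish both inequalities $\dim\D^G\leq \dim\D$ and $\dim\D\leq \dim\D^G$ by invoking Lemma~\ref{dense retract}, using the exact functors $\omega\colon \D^G\ra \D$ and $\ind\colon \D\ra \D^G$, both of which we have at our disposal under \textbf{(Hypothesis)}.

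For the inequality $\dim\D^G\leq \dim\D$, I would argue that $\ind$ is dense up to direct summands. Indeed, by Lemma~\ref{adjoint}(3) (which applies since $|G|$ is invertible in $\D$; note that idempotent-completeness is not needed here because $X$ is already a direct summand of $\ind\circ\omega(X)$ via the formula $\frac{1}{|G|}\eta_X\epsilon_X$), every $X\in \D^G$ is a direct summand of $\ind(\omega(X))$. Since $\ind$ is an exact functor that is dense up to direct summands, Lemma~\ref{dense retract} yields $\dim\D^G\leq \dim\D$.

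For the reverse inequality $\dim\D\leq \dim\D^G$, I would verify analogously that $\omega$ is dense up to direct summands. For any $A\in\D$ we have $\omega\circ\ind(A)\cong \bigoplus_{g\in G}F_g(A)$ by the definition of $\ind$, so $A$ is (canonically) a direct summand of $\omega(\ind(A))$, which lies in the essential image of $\omega$. Since $\omega$ is exact, Lemma~\ref{dense retract} then gives $\dim\D\leq\dim\D^G$.

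Combining the two inequalities yields $\dim\D^G=\dim\D$. I do not foresee any real obstacle here; the entire argument is essentially an immediate corollary of Lemma~\ref{dense retract} together with the elementary retract properties of the adjoint pair $(\omega,\ind)$ already recorded in Lemma~\ref{adjoint}.
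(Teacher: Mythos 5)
Your argument is correct and rests on exactly the same pillars as the paper's proof: Lemma~\ref{dense retract} applied to the exact functors $\ind$ and $\omega$, which are dense up to direct summands. The difference is purely in how the non-idempotent-complete case is handled. The paper first proves the statement when $\D$ is idempotent-complete (where Lemma~\ref{adjoint}(3) applies verbatim) and then reduces the general case via the chain $\dim\D^G=\dim\widehat{\D^G}=\dim\widehat{\D}^G=\dim\widehat{\D}=\dim\D$, using Lemma~\ref{dense retract} again for the idempotent completions. You instead argue directly in the general case, which is cleaner, but your parenthetical justification is too quick: the formula $\tfrac{1}{|G|}\eta_X\epsilon_X$ only exhibits an idempotent on $\ind\circ\omega(X)$ that splits through $X$, i.e.\ it shows $X$ is a \emph{retract} of $\ind\circ\omega(X)$; in a general additive category this does not make $X$ a direct summand, which is precisely why Lemma~\ref{adjoint}(3) carries the idempotent-completeness hypothesis. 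What saves you is that under {\bf (Hypothesis)} the category $\D^G$ is triangulated, and in a (pre)triangulated category a split monomorphism $\eta_X$ completes to a triangle with zero connecting morphism, so $\ind\circ\omega(X)\cong X\oplus \cone(\eta_X)$ and $X$ really is a direct summand. With that one sentence added, your direct argument is complete and avoids the idempotent-completion detour; the $\omega$-direction, where $A\cong F_e(A)$ is literally one of the summands of $\omega\circ\ind(A)=\oplus_{g\in G}F_g(A)$, is unproblematic in both treatments.
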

\begin{proof}
	First suppose that $\D$ is idempotent-complete. Then both the  induction functor and the forgetful functor are exact functors, dense up to direct summands.  By Lemma~\ref{dense retract}, we have $\dim \D^G=\dim \D$. In general, we have \[\dim \D^G=\dim \widehat{\D^G}=\dim \widehat{\D}^G=\dim \widehat{\D}=\dim \D,\] where $\widehat{\A}$ denotes the idempotent-completion of a category $\A$.
\end{proof}

\begin{exm} Let $X$ be a separated scheme of finite type over a perfect field $k$. Suppose $X$ has a right $G$-action by $k$-automorphisms and  $|G|$ is invertible in $k$. Denote by $[X/G]$ the associated quotient stack and by $\coh[X/G]$ the category of coherent sheaves on $[X/G]$.  Then we have 
	\[\begin{aligned} \dim \D^b(\coh[X/G]) &  =\dim \D^b(\coh^G(X)) && \text{since $\coh[X/G]\simeq \coh^G(X)$}\\
			& =\dim \D^b(\coh(X))^G &&\text{since $\D^b(\coh^G(X))\simeq \D^b(\coh(X))^G$}\\
			& =\dim \D^b(\coh(X))&& \text{by Proposition~\ref{dim}}\\
			& <\infty && \text{by \cite[Theorem 7.38]{R}.}
	\end{aligned}\]

\end{exm}

Let $k$ be a field. A triangulated $k$-category $\D$ is said to be \emph{of finite type} if \[\oplus_{i\in\ZZ}\hom^i_\D(X,Y)\] is finite dimensional for any $X, Y\in \D$. Let us recall the definition of saturatedness of a triangulated category in the sense of Bondal and Kapranov \cite{BK}. 
\begin{defn}
	A triangulated $k$-category $\D$ of finite type is called \emph{left} (resp. \emph{right}) \emph{saturated} if any cohomological functor \[H\colon \D\lra \mod k \quad (\text{resp.}\,\, H\colon \D^{\op}\lra \mod k)\] of finite type (i.e., such that $\oplus_{i\in\ZZ} H(A[i])$ is  finite dimensional for any $A\in \D$) is representable. $\D$ is called \emph{saturated} if it is both left and right saturated.
\end{defn}
We show that saturatedness is inherited through equivariantization.
\begin{prop}\label{saturated}
	Suppose $\D$ is an idempotent-complete triangulated category of finite type satisfying {\bf (Hypothesis)}.   If $\D$ is left saturated (resp. right saturated,  saturated) then so is $\D^G$. The converse also holds if $G$ is solvable.
	
\end{prop}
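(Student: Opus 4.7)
The plan is to prove the forward implication by representing a cohomological functor on $\D^G$ through the induction functor, and to derive the converse by iterating Theorem~\ref{A^G A} along the derived series of $G$.

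For the forward direction, suppose $\D$ is left saturated and let $H\colon \D^G\to\mod k$ be a cohomological functor of finite type. Since $\ind\colon\D\to\D^G$ is exact and commutes with the shift, the composite $H\circ\ind$ is a cohomological functor on $\D$ of finite type, so by left saturation of $\D$ there exists $B\in\D$ with $H(\ind X)\cong\hom_\D(B,X)$ naturally in $X\in\D$. Next, I would promote $B$ to an object of $\D^G$: viewing $\ind$ as a $G$-equivariant functor $\D\to\D^G$ (with $\D^G$ given the trivial $G$-action), its structural isomorphisms $\phi_g\colon\ind F_g\overset{\simeq}{\to}\ind$ translate, after applying $H$ and invoking the adjunction $(F_g,F_{g^{-1}})$, into natural isomorphisms $\hom_\D(F_{g^{-1}}B,-)\cong\hom_\D(B,-)$, and Yoneda then produces isomorphisms $\lambda_g\colon F_g B\overset{\simeq}{\to} B$. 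The $2$-cocycle relation for $\{\lambda_g\}$ is verified by chasing the equivariance relation $\phi_h\circ\phi_g F_h=\phi_{hg}\circ\ind\delta_{g,h}$ through $H$ and Yoneda, with the aid of Corollary~\ref{commutativity}.

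Once $B\in\D^G$ is constructed, the $(\omega,\ind)$-adjunction yields $\hom_{\D^G}(B,\ind X)\cong\hom_\D(\omega B,X)=\hom_\D(B,X)\cong H(\ind X)$, providing a natural isomorphism between $H$ and $\hom_{\D^G}(B,-)$ on the essential image of $\ind$. Each $Y\in\D^G$ is a direct summand of $\ind\,\omega(Y)$ via the functorial idempotent $\tfrac{1}{|G|}\eta_Y\epsilon_Y$ (Lemma~\ref{adjoint}(3)); since $H$ and $\hom_{\D^G}(B,-)$ are additive functors that respect this idempotent, the isomorphism extends uniquely and naturally to all of $\D^G$, showing $H$ is representable. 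The right-saturation case proceeds dually with contravariant cohomological functors and the opposite adjunction.

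For the converse, assume $G$ is solvable and $\D^G$ is (left/right/)saturated. By Theorem~\ref{A^G A}, $(\D^G)^{G^*}\simeq\D^{G'}$, where $G'$ is the commutator subgroup; the action of $G^*$ on $\D^G$ satisfies the hypotheses of the forward direction ($|G^*|$ divides $|G|$ and so is invertible, the strict action $-\otimes\chi$ is admissible, and $(\D^G)^{G^*}$ inherits a canonical triangulated structure via the equivalence with $\D^{G'}$), so the forward implication yields saturation of $\D^{G'}$. Iterating this argument along the derived series $G\supseteq G'\supseteq G''\supseteq\cdots\supseteq\{e\}$, which terminates by solvability, eventually produces the saturation of $\D=\D^{\{e\}}$. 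The principal technical obstacle throughout is the $2$-cocycle verification for $\{\lambda_g\}$ in the equivariant upgrade of $B$, which demands careful bookkeeping of the interplay between the equivariance structure of $\ind$, the $2$-cocycle $\{\delta_{g,h}\}$ of the $G$-action, and the $(F_g,F_{g^{-1}})$-adjunction under Yoneda's lemma.
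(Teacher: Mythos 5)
Your converse direction matches the paper (iterate Proposition~\ref{A^G A} along the derived series), but your forward direction takes a much more laborious route than necessary and contains a genuine gap at its final step. The paper's argument is a three-line retract trick: since $H\circ\ind\cong\hom_\D(A,-)$, the adjunctions give $H\circ\ind\circ\omega\cong\hom_\D(A,\omega(-))\cong\hom_{\D^G}(\ind(A),-)$, and because the composite $\id\to\ind\circ\omega\to\id$ is $|G|\cdot\id$, the functor $H$ is a direct summand of the representable functor $H\circ\ind\circ\omega$ in the functor category. By Yoneda the corresponding idempotent lives in $\End_{\D^G}(\ind(A))$, it splits because $\D^G$ is idempotent-complete, and the resulting summand of $\ind(A)$ represents $H$. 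No equivariant structure on the representing object of $H\circ\ind$ ever needs to be constructed.

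The gap in your version is in the sentence ``since $H$ and $\hom_{\D^G}(B,-)$ are additive functors that respect this idempotent, the isomorphism extends uniquely and naturally to all of $\D^G$.'' The isomorphism $\hom_{\D^G}(B,\ind X)\cong H(\ind X)$ you build is natural only with respect to morphisms of the form $\ind(f)$ for $f$ in $\D$ (it is assembled from the $(\omega,\ind)$-adjunction and the isomorphism $\alpha\colon H\circ\ind\cong\hom_\D(B,-)$, both natural in $X\in\D$). The idempotent $e_Y=\tfrac{1}{|G|}\eta_Y\epsilon_Y$ on $\ind\,\omega(Y)$ is a morphism of $\D^G$ that is \emph{not} in the image of $\ind$, so compatibility of your isomorphism with $e_Y$ is exactly what needs proving and does not follow from the naturality you have established; without it you cannot restrict the isomorphism to the summand $Y$. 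Making this work would require showing that $\alpha$ interacts correctly with the full $G$-action on $\hom_\D(B,\omega Y)$, i.e.\ essentially the same coherence you defer in the ``$2$-cocycle verification'' for $\{\lambda_g\}$ --- which you also do not carry out, and which is the other unproved load-bearing claim (a priori the equivariant structure on $B$ produced by Yoneda could fail the cocycle identity, or could be the ``wrong'' twist, e.g.\ $B$ versus $B\otimes\chi$ in the trivial-action case). I recommend replacing the whole construction by the direct-summand-of-a-representable argument above.
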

\begin{proof}
	We show that if $\D$ is left saturated then so is $\D^G$. Similar arguments allow one to conclude that if $\D$ is right saturated then so is $\D^G$ and thus $\D^G$ is saturated if $\D$ is. Let $H\colon \D^G\ra \mod k$ be a cohomological functor of finite type. Then $H\circ \ind\colon \D\ra \mod k$ is a cohomological functor of finite type. Now that $\D$ is left saturated, we have a natural isomorphism $H\circ \ind\cong \hom_{\D}(A, -)$ for an object $A\in \D$. So \[H\circ \ind \circ \omega\cong \hom_\D(A, \omega(-))\cong \hom_{\D^G}(\ind(A), -).\] Recall that the composition $\id \ra \ind \circ\omega\ra \id $ of adjunctions  is $|G|\cdot \id$. Thus $H$ is a direct summand of $H\circ \ind \circ \omega$. Since $H\circ \ind\circ \omega$ is representable and since $\D^G$ is idempotent-complete, $H$ is also representable. This shows that $\D^G$ is left saturated. When  $G$ is solvable, by combining Propsition~\ref{A^G A} and the conclusion that we just obtained, we see that  $\D$ is left saturated (resp. right saturated,  saturated) iff so is $\D^G$.
\end{proof}


\begin{thebibliography}{10}

	 \bibitem{BS} P. Balmer and M. Schlichting. \emph{Idempotent completion of triangulated categories.} J. Algebra {\bf 236} (2001), no. 2, 819--834. 

	 \bibitem{Balmer} P. Balmer. \emph{Separability and triangulated categories.} Adv. Math. {\bf 226} (2011), no. 5, 4352--4372. 

	 \bibitem{BaWe} M. Barr and C. Wells. \emph{Toposes, triples and theories.} Repr. Theory Appl. Categ. {\bf 12} (2005), 1--288.

	\bibitem{Beck} J.~M. Beck. \emph{Triples, algebras and cohomology.} Reprints in Theory and Applications of Categories (2003), no. 2, 1--59.

	 \bibitem{BBD} A.~A. Beilinson, J. Bernstein and P. Deligne. \emph{Faisceaux pervers}, in: Analysis and Topology on Singular Spaces, I (Luminy, 1981), in: Aste\'{r}isque, vol. 100, Soc. Math. France, Paris, 1982,  5--171.

	 \bibitem{BBW} G. B\"ohm, T. Brzezinski, R. Wisbauer. \emph{Monads and comonads on module categories.} J. Algebra {\bf 322} (2009), no. 5, 1719--1747.



	 \bibitem{BrVi} A. Brugui\`eres, A. Virelizier. \emph{Hopf monads.} Adv. Math. {\bf 215} (2007), no. 2, 679--733.

	\bibitem{BL} J. Bernstein, V. Lunts. \emph{Equivariant sheaves and functors.}
Lecture Notes in Mathematics, 1578. Springer-Verlag, Berlin, 1994. iv+139 pp. ISBN: 3-540-58071-9 

	 \bibitem{bondal} A.~I. Bondal. \emph{Representation of associative algebras and coherent sheaves.} (Russian) Izv. Akad. Nauk SSSR Ser. Mat. {\bf 53} (1989), no. 1, 25--44; translation in 
		 Math. USSR-Izv. {\bf 34} (1989), no. 1, 23--42.

	
	 \bibitem{BK} A.~I. Bondal and M.~M. Kapranov.  \emph{Representable functors, Serre functors, and reconstructions.} Izv. Akad.
		 Nauk SSSR Ser. Mat. {\bf 53} (1989), 1183--1205.

	 \bibitem{BoNe} M. B\"okstedt and A. Neeman. \emph{Homotopy limits in triangulated categories}. Compositio Mathematica, {\bf 86} (1993), no. 2, 209--234.

	 \bibitem{BKR} T. Bridgeland,   A. King and M. Reid. \emph{The McKay correspondence as an equivalence of derived categories.} J. Amer. Math. Soc. {\bf 14} (2001), no. 3, 535--554.

	\bibitem{buch} R.-O. Buchweitz. \emph{Maximal Cohen-Macaulay modules and Tate cohomology over Gorenstein rings.} Unpublished manuscript, Hamburg (1987), 155pp.

	\bibitem{Theo} T. B\"uhler. \emph{Exact categories.} Expositiones Mathematicae {\bf 28} (2010), no. 1,  1--69.

	\bibitem{BurSte} J. Burke and G. Stevenson. \emph{The derived category of a graded Gorenstein ring.} Math. Sci. Res. Inst. Publ., {\bf 68}, Cambridge Univ. Press, New York, 2015. 

	\bibitem{chen} X.-W. Chen. \emph{Equivariantization and Serre Duality I.} Applied Categorical Structures (2016), 1--30.

	\bibitem{chen2} X.-W. Chen. \emph{Relative singularity categories and Gorenstein-projective modules.} 
		Math. Nachr. {\bf 284} (2011), no. 2-3, 199--212. 

	\bibitem{XWC} X.-W. Chen. \emph{A note on separable functors and monads with an application to equivariant derived categories.} Abh. Math. Semin. Univ. Hambg. {\bf 85} (2015), no. 1, 43--52.



	\bibitem{CCZ} J.-M. Chen, X.-W. Chen and Z.-Q. Zhou. \emph{Monadicity theorem and weighted projective lines of tubular type.} 
Int. Math. Res. Not. IMRN 2015, no. 24, 13324--13359.

	\bibitem{deligne} P. Deligne. \emph{Action du groupe des tresses sur une cat\'egorie.}
		Invent. Math. {\bf 128} (1997), no. 1, 159--175. 


	\bibitem{DGNO} V. Drinfeld, S. Gelaki, D. Nikshych and V. Ostrik. \emph{On braided fusion categories I.} Selecta Math. (N.S.) {\bf 16} (2010), no. 1, 1--119. 

	\bibitem{E4} A.~D. Elagin. \emph{Cohomological descent theory for a morphism of stacks and for equivariant derived categories.} Sbornik Math. {\bf 202}  (2011), no. 4, 495--526. 


	\bibitem{E} A.~D. Elagin. \emph{On equivariant triangulated categories.} arXiv:1403.7027

	\bibitem{E2} A.~D. Elagin. \emph{Semiorthogonal decompositions of derived categories of equivariant coherent sheaves.} Izvestiya: Mathematics {\bf 73} (2009), no. 5, 893--920.

	\bibitem{E3} A.~D. Elagin. \emph{Descent theory for semiorthogonal decompositions.}  Sb. Math. {\bf  203} (2012), no. 5, 645--676; translation from Mat. Sb. {\bf 203} (2012), no. 5, 33--64. 

	\bibitem{GL} W. Geigle, H. Lenzing. \emph{A class of weighted projective curves arising in representation theory of finite-dimensional algebras}, in: Singularities, Representation of Algebras, and Vector Bundles (Lambrecht, 1985), in: Lecture Notes in Math., vol. 1273, Springer, Berlin, 1987, pp. 265--297.

	\bibitem{Tohoku} A. Grothendieck. \emph{Sur quelques points d'algèbre homologique.} T\^ohoku Math. J. , Second Series {\bf 9} (1957), no. 2, 119--183.

	\bibitem{happel} D. Happel. \emph{Triangulated Categories in the Representation Theory of Finite Dimensional Algebras.} London Mathematical Society Lecture Notes Series Vol. 119 (Cambridge University Press, Cambridge, 1988).

	\bibitem{happel2}  D. Happel. \emph{On Gorenstein Algebras.}  Progress in Mathematics Vol. 95 (Birkh\"auser Verlag, Basel, 1991), pp 389--404.

	\bibitem{heller} A. Heller. \emph{Stable homotopy category.}  Bull. Amer. Math. Soc. {\bf 74} (1968), 28--63.



	\bibitem{IY} O. Iyama and  D. Yang. \emph{Silting reduction and Calabi-Yau reduction of triangulated categories.} Trans. Amer. Math. Soc. {\bf 370} (2018), 7861--7898. 

	\bibitem{LC} J. Le and X.-W. Chen. \emph{Karoubianness of a triangulated category.} J. Algebra {\bf 310} (2007), no. 1, 452--457.

	\bibitem{Maclane} S. Mac Lane. \emph{Categories for the working mathematician. Second edition.} Graduate Texts in Mathematics, 5. Springer-Verlag, New York, 1998. xii+314 pp.

	\bibitem{Kel} B. Keller. \emph{Derived categories and their uses.} In, Handbook of Algebras, Vol. 1, 671--701, North-Holland, 1996. 

	\bibitem{Keller} B. Keller. \emph{Chain complexes and stable categories.} Manuscripta Math. {\bf 67} (1990), 379--417.

	\bibitem{keller2} B. Keller. \emph{Derived categories and universal problems.} Comm. Algebra {\bf 19} (1991), 379--417.

	\bibitem{kirrilov} A. Kirrilov Jr. \emph{McKay correspondence and equivariant sheaves on $\PP^1$}. Mosc. Math. J. {\bf 6} (2006), no. 3, 505--529, 587--588. 

	\bibitem{KP}  A. Kuznetsov and A. Perry. \emph{Derived categories of cyclic covers and their branch divisors.} Selecta Mathematica {\bf 23} (2016), no. 1, 1--35.

	\bibitem{Mesa} B. Mesablishvili.  \emph{Monads of effective descent type and comanadicity.} Theory Appl. Categ. {\bf 16} (2006), no. 1, 1--45. 

	\bibitem{Neeman} A. Neeman. \emph{Triangulated Categories. (AM-148).} Vol. 148. Princeton University Press, 2014.

	\bibitem{NVVO} C. Nastasescu and M. Van den Bergh, F. Van Oystaeyen. \emph{Separable functors applied to graded rings.} J.
		Algebra {\bf 123} (1989), 397--413. 

\bibitem{orlov3} D. Orlov. \emph{Derived categories of coherent sheaves and triangulated categories of singularities.} Algebra, arithmetic, and geometry: in honor of Yu. I. Manin. Vol. II, 503--531, 
Progr. Math., 270, Birkh\"auser Boston, Inc., Boston, MA, 2009. 

	\bibitem{Ostrik} V. Ostrik. \emph{Module categories over the Drinfeld double of a finite group.} 
Int. Math. Res. Not. 2003, no. 27, 1507--1520.

	\bibitem{Poli} A. Polishchuk. \emph{Holomorphic bundles on 2-dimensional noncommutative toric orbifolds.} Noncommutative geometry and number theory. Vieweg, 2006. 341--359.

	\bibitem{quillen} 	D. Quillen. \emph{Higher Algebraic K-theory I.} Springer LNM 341, 1973, 85--147.

	\bibitem{Rafael} M.~D. Rafael. \emph{Separable functors revisited.} Commun. Algebra {\bf 18} (1990), no. 5, 1445--1459.

	\bibitem{RR} I. Reiten and C. Riedtmann. \emph{Skew group algebras in the representation theory of Artin algebras.} Journal of Algebra {\bf 92} (1985), no. 1, 224--282.

	\bibitem{ReiVan} I. Reiten and M. Van den bergh. \emph{Noetherian hereditary abelian categories satisfying Serre duality.} J. Amer. Math. Soc. {\bf 15} (2002), no. 2,  295--366.
	
	\bibitem{R} R. Rouquier. \emph{Dimensions of triangulated categories.} Journal of K-theory: K-theory and its Applications to Algebra, Geometry, and Topology {\bf 1} (2008), no. 2, 193--256.

	\bibitem{Sonsa} P. Sonsa. \emph{Linearization of triangulated categories with respect to finite group quotients.} Math. Res. Lett. {\bf 19} (2012), no. 05, 1007--1020.

	\bibitem{verdier} J.-L. Verdier. \emph{Cat\'egories d\'eriv\'ees: quelques r\'esultats (\'etat 0).} Cohomologie \'etale, 262--311, Lecture Notes in Math., 569, Springer, Berlin, 1977.

	\bibitem{zhang} P. Zhang. \emph{Triangulated categories and derived categories}, in Chinese. Science Press (2015).
\end{thebibliography}
\end{document}